\newcommand{\RR}{{\mathbb{R}}}
\newcommand{\CC}{{\mathbb{C}}}
\newcommand{\1}{{\mathbbm{1}}}   
\DeclareMathOperator*{\var}{var}
\DeclareMathOperator*{\tr}{tr}
\DeclareMathOperator*{\sinc}{sinc}
\DeclareMathOperator*{\support}{supp}
\DeclareMathOperator*{\interior}{Int}
\newcommand{\bs}{\boldsymbol}
\newcommand{\EE}{{\mathbb{E}}}
\DeclareMathOperator*{\diag}{diag}
\newcommand{\toas}{\xrightarrow{\text{a.s.}}}
\newcommand{\toP}{\xrightarrow{\mathcal{P}}}
\newcommand{\toL}{\xrightarrow{\mathcal{L}}}
\newcommand{\T}{{\mathrm{T}}}
\newtheorem{lemma}{Lemma}
\newtheorem{corollary}{Corollary}
\newtheorem{proposition}{Proposition}
\newtheorem{theorem}{Theorem}
\newtheorem{remark}{Remark}
\def\adots{
  \mathinner{\mkern1mu\raise1pt\hbox{.}\mkern2mu\raise4pt\hbox{.}
  \mkern2mu\raise7pt\vbox{\kern7pt\hbox{.}}\mkern1mu}}
\def\build#1_#2^#3{\mathrel{
\mathop{\kern 0pt#1}\limits_{#2}^{#3}}}
\newtheorem{assumption}{{\bf Assumption}}
\theoremstyle{remark}
\begin{document}

\begin{frontmatter}

\title{A Subspace Estimator for Fixed Rank Perturbations \\ 
of Large Random Matrices} 

\author[cnrs]{Walid Hachem\corref{cor1}} 
\ead{walid.hachem@telecom-paristech.fr} 

\author[umlv]{Philippe Loubaton} 
\ead{loubaton@univ-mlv.fr} 

\author[cttc]{Xavier Mestre}
\ead{xavier.mestre@cttc.cat} 

\author[cnrs]{Jamal Najim} 
\ead{jamal.najim@telecom-paristech.fr} 

\author[umlv]{Pascal Vallet} 
\ead{vallet@univ-mlv.fr}

\address[cnrs]{
CNRS ; T\'el\'ecom Paristech, 46, rue Barrault, 75013 Paris, France.} 
\address[umlv]{
IGM LabInfo, UMR 8049, Institut Gaspard Monge,
Universit\'e Paris Est Marne-la-Vall\'ee, \\ 
5, Bd Descartes, Champs sur Marne, 
77454 Marne La Vall\'ee Cedex 2, France.} 
\address[cttc]{
Centre Tecnol\`ogic de Telecomunicacions de Catalunya (CTTC), 
Parc Mediterrani de la Tecnologia - Building B4, 
Av.~Carl Friedrich Gauss 7, 
08860 - Castelldefels 
Barcelona, Spain.}

\cortext[cor1]{Corresponding author}

\date{\today} 

\begin{abstract}
  This paper deals with the problem of parameter estimation based on
  certain eigenspaces of the empirical covariance matrix of an
  observed multidimensional time series, in the case where the time
  series dimension and the observation window grow to infinity at the
  same pace.  In the area of large random matrix theory, recent
  contributions studied the behavior of the extreme eigenvalues of a
  random matrix and their associated eigenspaces when this matrix is
  subject to a fixed-rank perturbation.  The present work is concerned
  with the situation where the parameters to be estimated determine
  the eigenspace structure of a certain fixed-rank perturbation of the
  empirical covariance matrix. An estimation algorithm in the spirit
  of the well-known MUSIC algorithm for parameter estimation is
  developed. It relies on an approach recently developed by
  Benaych-Georges and Nadakuditi \cite{ben-rao-published, ben-rao-11},
  relating the eigenspaces of extreme eigenvalues of the empirical
  covariance matrix with eigenspaces of the perturbation matrix.
  First and second order analyses of the new algorithm are performed.
\end{abstract} 

\begin{keyword}
Large Random Matrix Theory, MUSIC Algorithm, Extreme Eigenvalues,
Finite Rank Perturbations.
\end{keyword}

\end{frontmatter}

\section{Introduction}
\label{intro}

Parameter estimation algorithms based on the estimation of an eigenspace  
of the autocorrelation matrix of an observed multivariate time series are very
popular in the areas of statistics and signal processing. 
Applications of such algorithms include the estimation of the angles of 
arrival of plane waves impinging on an array of antennas, the estimation
of the frequencies of superimposed sine waves, or the resolution of multiple 
paths of a radio signal. 
Denoting by $N$ the signal dimension (\emph{e.g.}, the number of antennas) 
and by $n$ the length of the time observation window, the observed time
series is represented by a $N \times n$ random matrix 
$\Sigma_n = X_n + P_n$ where $X_n$ and $P_n$ are respectively the 
so-called noise and signal matrices. In many applications, $P_n$ is 
represented as 
\begin{equation} 
\label{def-P-DOA} 
P_n 
= B(\varphi_1, \cdots \varphi_r) S_n^* \ ,
\end{equation} 
where $(\varphi_1, \ldots, \varphi_r)$ are the $r \leq \min(N,n)$ 
deterministic parameters to be estimated, $B$ is a $N \times r$ matrix of 
the form $B(\varphi_1, \cdots \varphi_r) = 
\begin{bmatrix} b(\varphi_1) &\cdots & b(\varphi_r) \end{bmatrix}$
where $b(\varphi)$ is a known $\CC^N$-valued function of $\varphi$,
and the $S_n$ is an unknown $n \times r$ matrix with rank $r$
representing the signals transmitted by the $r$ emitting sources.  As
usual (and unless stated otherwise), $A^*$ stands for the Hermitian
adjoint of matrix $A$. It will be assumed in this work that this
matrix is deterministic. Often, the noise matrix $X_n$ is a complex random
matrix such that the real and imaginary parts of its elements are $2Nn$
independent random variables with common probability law
$\mathcal{N}(0,1/(2n))$. In this case, we shall say that
$\sqrt{n} X_n$ is a standard Gaussian matrix.

We shall consider here ``direction of arrival'' vector functions $b(\varphi)$ 
that are typically met in the field of antenna processing. 
These functions are written $$b(\varphi) = 
N^{-1/2} \begin{bmatrix} \exp( - \imath D \ell \varphi ) 
\end{bmatrix}_{\ell=0}^{N-1}$$ with domain $\varphi \in [0, \pi/D]$
where $D$ is a positive real constant and $\imath^2=-1$. Assuming that
the angular parameters $\varphi_k$ are all different, the well-known
MUSIC (MUltiple SIgnal Classification,
\cite{schmidt-MUSIC,bienvenu-MUSIC}) algorithm for estimating these
parameters from $\Sigma_n$ relies on the following simple idea: Assume
that $\sqrt{n} X_n$ is standard Gaussian and let $\Pi$ be the
orthogonal projection matrix on the eigenspace of $\EE
\Sigma_n\Sigma_n^* = B S_n^* S_n B^* + I_N$ associated with the $r$
largest eigenvalues, where $I_N$ is the $N\times N$ identity
matrix. Obviously, $\Pi$ is the orthogonal projector on the column
space of $B(\varphi_1, \ldots, \varphi_r)$. As a consequence, the
angles $\varphi_k$ coincide with the zeros of the function
$b(\varphi)^* (I - \Pi) b(\varphi)$ on $[0, \pi/D]$.  Since $\|
b(\varphi) \| = 1$, they equivalently coincide with the maximum values
(at one) of the so-called localization function $\chi(\varphi) =
b(\varphi)^* \Pi b(\varphi)$.

In practice, $\Pi$ is classically replaced with the orthogonal
projection matrix $\widehat\Pi$ on the eigenspace associated with the
$r$ largest eigenvalues of $\Sigma_n \Sigma_n^*$.  Assuming $N$ is
fixed and $n\to\infty$, and assuming furthermore that $S_n^* S_n$
converges to some matrix $O > 0$ in this asymptotic regime, the
$\Sigma\Sigma^* \toas BOB^* + I_N$ by the Law of Large Numbers
(a.s. stands for almost surely). Hence, the random variable
$\chi_{\text{classical}}(\varphi) = b(\varphi)^* \widehat\Pi
b(\varphi)$ a.s.~converges to $\chi(\varphi)$, and it is standard to
estimate the arrival angles as
local maxima of $\chi_{\text{classical}}(\varphi)$. 

However, in many practical situations, the signal dimension $N$ and
the window length $n$ are of the same order of magnitude in which case
the spectral norm of $\widehat\Pi - \Pi$ is not small, as we shall see
below. In these situations, it is often more relevant to assume that
both $N$ and $n$ converge to infinity at the same pace, while the
number of parameters $r$ is kept fixed. The subject of this paper is
to develop a new estimator better suited to this asymptotic regime,
and to study its first and second order behavior with the help of large
random matrix theory.

In large random matrix theory, much has been said about the spectral
behavior of $X_n X_n^*$ in this asymptotic regime, for a wide range of
statistical models for $X_n$. In particular, it is frequent that the
spectral measure of this matrix converge to a compactly supported
limiting probability measure $\pi$, and that the extreme eigenvalues
of $X_n X_n^*$ a.s.~converge to the edges of this support. Considering
that $\Sigma_n$ is the sum of $X_n$ and a fixed-rank perturbation, it
is well-known that $\Sigma_n \Sigma_n^*$ also has the limiting
spectral measure $\pi$ \cite[Lemma~2.2]{bai-99}.  However, the largest
eigenvalues of $\Sigma_n \Sigma_n^*$ have a special behavior: Under
some conditions, these eigenvalues leave the support of $\pi$, and in
this case, their related eigenspaces give valuable information on the
eigenspaces of $P_n$. This paper shows how the angles $\varphi_k$ can
be estimated from these eigenspaces.

The problem of the behavior of the extreme eigenvalues of large random
matrices subjected to additive or multiplicative low rank
perturbations (often called ``spiked models'') have received a great
deal of interest in the recent years. In this regard, the authors of
\cite{bbp05,bk-sil06,paul-07} study the behavior of the extreme
eigenvalues of a sample covariance matrix when the population
covariance matrix has all but finitely many eigenvalues equal to one,
a problem described in \cite{joh01}.  Reference \cite{cdf09} is
devoted to the extreme eigenvalues of a Wigner matrix that incurs a
fixed-rank additive perturbation.  Fluctuations of these eigenvalues
are studied in
\cite{bbp05,peche-06,paul-07,by08,cdf09,cdf-clt09,bgm-fluct10}. 

Recently, Benaych-Georges and Nadakuditi proposed in
\cite{ben-rao-published, ben-rao-11} a powerful technique for
characterizing the behavior of extreme eigenvalues and their
associated eigenspaces for three generic spiked models: The models
$X_n+ P_n$ and $(I_n + P_n) X_n$ when both $X_n$ and $P_n$ are
Hermitian and $P_n$ is low-rank, and the model that encompasses ours
$(X_n + P_n)(X_n+P_n)^*$ where $X_n$ and $P_n$ are rectangular.  One
feature of this approach is that it uncovers simple relations between
the extreme eigenvalues and their associated eigenspaces on the one
hand, and certain quadratic forms involving resolvents related with the
non-perturbed matrix $X_n$ on the other. This makes the method
particularly well-suited (but not limited to) the situation where
$X_n$ is unitarily or bi-unitarily invariant, a situation that we
shall consider in this paper.  Indeed, in this situation, these
quadratic forms exhibit a particularly simple behavior in the
considered large dimensional asymptotic regime.

In this paper, we make use of the approach of 
\cite{ben-rao-published, ben-rao-11}
to develop a new subspace estimator of the angles $\varphi_k$ based on 
the eigenspaces of the isolated eigenvalues of $\Sigma_n\Sigma_n^*$. 
We perform the first and second order analyses of this estimator that we
call the ``Spike MUSIC'' estimator. 
Our mathematical developments differ somehow from those of 
\cite{ben-rao-published, ben-rao-11} and could have their own interest. 
They are based on two simple ingredients: The first is an analogue 
of the Poincar\'e-Nash inequality for the Haar distributed unitary matrices 
which has been recently discovered by Pastur and Vasilchuk 
\cite{pas-vas-book07}, and the second is a contour integration method 
by means of which the first and second order analyses are done. 
The key step of the second order analysis of our estimator lies in 
the establishment of a Central Limit Theorem on the quadratic forms 
$b(\varphi_i)^* \widehat\Pi_i b(\varphi_i)$ where the $\widehat\Pi_i$ 
are the orthogonal projection matrices on certain eigenspaces of 
$\Sigma_n\Sigma_n^*$ associated with the isolated eigenvalues. The employed 
technique can easily be used to study the fluctuations of projections of other
types of vectors on these eigenspaces. 
 







We now state our general assumptions and introduce some notations. 

\subsection*{Assumptions and Notations} 
We now state the general assumptions of the paper. Consider the
sequence of $N \times n$ matrices $\Sigma_n = X_n +P_n$ where:
\begin{assumption}\label{ass:asymptotics}
The dimensions $N,n$ satisfy: $N \leq n$, $n \to \infty$ and 
$$
\frac Nn \to c \in (0, 1] 
$$ (notation
for this asymptotic regime: $n\to\infty$). 
\end{assumption}

The following assumption on $X_n$ is widely used in the random matrix
literature \cite{hia-pet-book00,pas-shc-book11}: 

\begin{assumption}\label{ass:X}
Matrices $X_n$ are random $N \times n$ bi-unitarily invariant matrices, 
\emph{i.e.}, each $X_n$ admits the singular value decomposition 
$X_n = L_n \Gamma_n R_n^*$ where $L_n$, the $N\times N$ matrix $\Gamma_n$ 
and $R_n$ are independent,  
$L_n$ is Haar distributed on the group ${\mathcal U}(N)$ of unitary $N\times N$
matrices, and $R_n$ is a $n \times N$ submatrix of a Haar distributed matrix
on ${\mathcal U}(n)$. 
\end{assumption}


We recall that the Stieltjes transform of a probability measure $\pi$ on the
real line is the complex function 
\[
m(z) = \int \frac{1}{t-z} \pi(dt) \ ,
\]
analytic on $\CC_+ = \{ z : \Im(z) > 0 \}$. 
\begin{assumption}\label{ass:resolvent}
Let $Q_n(z) = (X_n X_n^* - zI_N)^{-1}$ be the resolvent associated with 
$X_nX_n^*$ and let $\alpha_n(z) = N^{-1} \tr Q_n(z)$. For every $z \in \CC_+$, 
$\alpha_n(z)$ 
a.s.~converges to a deterministic function $m(z)$ which is the 
Stieltjes transform of a probability measure $\pi$ supported by the 
compact interval $[\lambda_-, \lambda_+]$.
\end{assumption}

\begin{assumption}\label{ass:eigen-max}
The quantity $\| X_nX_n^* \|$ a.s.~converges to $\lambda_+$ as $n\to\infty$, where 
$\| \cdot \|$ denotes the spectral norm. 
\end{assumption}


Let $\widetilde Q_n(z) = (X_n^* X_n - zI_n)^{-1}$ and
$\tilde\alpha_n(z) = n^{-1} \tr\widetilde Q_n(z)$. Equivalently to the
convergence assumed by Assumption \ref{ass:resolvent}, one may assume that
$\tilde\alpha_n(z)$ a.s.~converges on $\CC_+$ to a
deterministic function $\tilde m(z)$ which is the Stieltjes transform
of a probability measure $\tilde\pi$. In that case, $\tilde m(z) = c
m(z) - (1-c)/z$ and $\tilde\pi = c \pi + (1-c)\delta_0$.

\begin{remark}
\label{rem-gauss} 
In the areas of signal processing and communication theory, the 
noise matrix $X_n$ satisfying Assumptions {\bf A2-A4} is such that 
$\sqrt{n} X_n$ is standard Gaussian - see for instance 
\cite{marchenko-pastur}, \cite{geman-80}. 
\end{remark} 

We first make a general assumption on matrices $P_n$; it will be specified 
later, and adapted to the context of the MUSIC algorithm: 
\begin{assumption}\label{ass:P-rough}
Matrices $P_n$ are deterministic with a fixed rank equal to $r$ for all 
$n$ large enough. Denoting by $P_n = U_n \Omega_n V_n^*$ a singular value 
decomposition of $P_n$, the matrix of singular values 
$\Omega_n = \diag(\omega_{1,n}, \ldots, \omega_{r,n})$ 
with $\omega_{1,n} \geq \omega_{2,n} \geq \cdots \geq \omega_{r,n}$ 
converges to  
\begin{equation} 
\label{eq-limit-O} 
O = \begin{bmatrix} 
\omega_1 I_{j_1} & & \\ & \ddots & \\
& & \omega_{s} I_{j_s} \end{bmatrix}  \ ,
\end{equation} 
where $\omega_1 > \cdots > \omega_s > 0$ and 
$j_1 + \cdots + j_s = r$. 
\end{assumption}


\subsubsection*{Notations.}
As usual, if $z\in \mathbb{C}$, we shall denote by $\Re(z)$ and
$\Im(z)$ its real and imaginary parts. We shall denote by
$\xrightarrow[]{a.s.}$ (resp.  $\xrightarrow[]{\mathcal P}$,
$\xrightarrow[]{\mathcal D}$) the almost sure convergence
(resp. convergence in probability, in distribution).
We denote by $\delta_{i,j}$ the Kronecker delta ($=1$ if $i=j$ and
$0$ otherwise). 
 
The eigenvalues of $\Sigma_n\Sigma_n^*$ are $\hat\lambda_{1,n} \geq
\hat\lambda_{2,n} \geq \cdots \geq \hat\lambda_{N,n}$. Associated
eigenvectors will be denoted $\hat u_{1,n}, \hat u_{2,n}, \cdots, \hat
u_{N,n}$.  For $k \in \{1,\ldots,r\}$, we shall denote by $i(k)$ the
index $i\in\{1,\ldots,s\}$ such that $j_1+\cdots+j_{i-1} < k \leq
j_1+\cdots+j_i$.  For $i=1,\ldots,s$, We shall denote by
$\widehat\Pi_{i,n}$ the orthogonal projection matrix on the eigenspace
of $\Sigma_n \Sigma_n^*$ associated with the eigenvalues
$\hat\lambda_{k,n}$ such that $i(k) = i$, \emph{i.e.},
$\widehat\Pi_{i,n} = \sum_{k:i(k) = i} \hat u_{k,n} \hat u_{k,n}^*$
when this eigenspace is defined.  Columns of $U_n$ (see {\bf A5}) will
be denoted $u_{1,n}, \cdots, u_{r,n}$.  Given $i$, the orthogonal
projection matrix on the eigenspace of $P_n P_n^*$ associated with the
eigenvalues $\omega_{k,n}^2$ such that $i(k) = i$ will be $\Pi_{i,n} =
\sum_{k:i(k) = i} u_{k,n} u_{k,n}^*$. Indexes $n$ and $N$ will often
be dropped for readability.

\subsection*{Paper organization}
The paper is organized as follows. Section \ref{prelim} is devoted to the
mathematical preliminaries. The general approach is described in Section 
\ref{vap-spikes}. The Spike MUSIC algorithm is presented in Section 
\ref{spk-music} along with a first order study of this algorithm. 
Fluctuations of the estimates of the $\varphi_k$ are studied in Section
\ref{sec-clt} under the form of a Central Limit Theorem.

\section{Preliminary mathematical results} 
\label{prelim} 

We shall need the two following results. The first one is well-known
\cite{pas-vas-book07}.  The second result, due to Pastur and
Vasilchuk, is the unitary analogue of the well-known Poincar\'e-Nash
inequality.
\begin{lemma}
\label{lm-moments-Haar}
Let $W = [w_{ij} ]$ be a random matrix Haar distributed on ${\mathcal U}(n)$. 
Then 
\[
\EE\left[ w_{ij} w^*_{i'j'} \right] = \frac 1n \delta_{i,i'} \delta_{j,j'}\ .
\]
\end{lemma}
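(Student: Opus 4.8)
The plan is to exploit the left- and right-invariance of the Haar measure on $\mathcal{U}(n)$ under multiplication by diagonal unitary (phase) matrices, which forces most of the second moments to vanish, and then to pin down the surviving diagonal term by a trace identity. First I would fix indices $i,j,i',j'$ and introduce, for $\theta\in\RR$, the diagonal unitary matrix $D_\theta = \diag(1,\dots,1,e^{\imath\theta},1,\dots,1)$ with the phase $e^{\imath\theta}$ in position $k$. Since $W$ and $D_\theta W$ have the same law (left-invariance), the quantity $\EE[w_{ij}w^*_{i'j'}]$ is unchanged if we replace $W$ by $D_\theta W$; but the entries of $D_\theta W$ are $e^{\imath\theta}w_{kj}$ in row $k$ and $w_{\ell j}$ in rows $\ell\neq k$. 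Comparing $\EE[w_{ij}w^*_{i'j'}]$ computed from $W$ and from $D_\theta W$ for all $\theta$ shows that the expectation must vanish unless the net phase cancels, i.e. unless $i=i'$. An identical argument using right-invariance ($W$ and $WD_\theta$ equidistributed) forces $j=j'$. Hence $\EE[w_{ij}w^*_{i'j'}] = c_n\,\delta_{i,i'}\delta_{j,j'}$ for some constant $c_n$ that, by permutation invariance of the Haar measure (conjugating $W$ by permutation matrices on the left and right), does not depend on $i$ or $j$.

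It then remains to evaluate the constant $c_n = \EE[|w_{ij}|^2]$. For this I would use the fact that every row of $W$ is a unit vector: $\sum_{j=1}^n |w_{ij}|^2 = 1$ almost surely, so taking expectations and using that all $n$ terms have the same expectation $c_n$ gives $n\,c_n = 1$, whence $c_n = 1/n$. This yields
\[
\EE\left[ w_{ij} w^*_{i'j'} \right] = \frac{1}{n}\,\delta_{i,i'}\delta_{j,j'},
\]
as claimed.

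I do not expect any genuine obstacle here; this is a standard first-moment computation for the Haar measure. The only point requiring a little care is the phase-cancellation step: one should phrase it cleanly, e.g. by noting that the map $W\mapsto D_\theta W$ multiplies $w_{ij}w^*_{i'j'}$ by $e^{\imath\theta(\mathbbm{1}_{i=k}-\mathbbm{1}_{i'=k})}$, and that equality of expectations for all $\theta$ and all choices of $k$ forces $\mathbbm{1}_{i=k}=\mathbbm{1}_{i'=k}$ for every $k$, i.e. $i=i'$; symmetrically for the columns. Alternatively, and perhaps more transparently, one could invoke the known first-order Weingarten calculus formula $\EE[w_{ij}\overline{w_{i'j'}}] = \delta_{i,i'}\delta_{j,j'}/n$ directly, but the invariance argument above is self-contained and matches the elementary level at which the paper states the result.
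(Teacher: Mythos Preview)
Your argument is correct and self-contained. Note, however, that the paper does not actually supply a proof of this lemma: it is stated as ``well-known'' with a reference to \cite{pas-vas-book07}, so there is no proof in the paper to compare against. Your invariance-based derivation (diagonal phase matrices to force $i=i'$ and $j=j'$, permutation invariance to get a single constant, and the unit-row identity to pin down $c_n=1/n$) is the standard elementary route and is exactly what one would expect a reader to supply if asked.
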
 
\begin{lemma}[\cite{pas-vas-book07,pas-shc-book11}] 
\label{lm-NP-like}
Let $\Phi : {\mathcal U}(n) \to \CC$ be a function that admits a $C^1$ 
continuation to an open neighborhood of ${\mathcal U}(n)$ in the whole algebra
of $n \times n$ complex matrices. Then
\[
\var \Phi(W_n) = \EE \left| \Phi(W_n) \right|^2 - 
\left| \EE \Phi(W_n) \right|^2 \leq 
\frac 1n \sum_{j,k=1}^n 
\EE 
\left| \Phi'(W_n) \cdot \left( {\bf e}_j {\bf e}_k^\T W_n \right) \right|^2
\] 
where $\EE$ is the expectation with respect to the Haar measure on 
${\mathcal U}(n)$, where $\Phi'$ is the differential of $\Phi$ as a 
function on $\RR^{2n^2}$ acting on the matrix ${\bf e}_j {\bf e}_k^\T W_n$
seen as an element of $\RR^{2n^2}$, and where 
${\bf e}_j = [0 \cdots 0 \, 1 \, 0 \cdots 0]^*$ is the $j^{\text{th}}$ 
canonical vector of $\CC^n$. 
\end{lemma}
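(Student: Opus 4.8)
The statement is a Poincar\'e (spectral--gap) inequality for the compact group $\mathcal U(n)$ equipped with its Haar probability measure, so the plan is to reduce it to the spectral gap of the Laplace--Beltrami operator and then to recast the resulting Dirichlet form in the form of the statement. Writing $\Phi=\Phi_1+\imath\Phi_2$ with $\Phi_1,\Phi_2$ real-valued, both sides of the inequality split additively over $\Phi_1,\Phi_2$, so it suffices to treat real $\Phi$. Fix a basis $T_1,\dots,T_{n^2}$ of the Lie algebra $\mathfrak u(n)$ of anti-Hermitian matrices, orthonormal for the form $\langle A,B\rangle=\tr(A^*B)$ (which is real-valued on $\mathfrak u(n)$), and set $\partial_\alpha\Phi(W)=\frac{d}{dt}\Phi(e^{tT_\alpha}W)\big|_{t=0}=\Phi'(W)\cdot(T_\alpha W)$. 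The Laplacian $\Delta=\sum_\alpha\partial_\alpha^2$ is self-adjoint and nonpositive on $L^2(\mathcal U(n))$; since each $\partial_\alpha$ generates a Haar-preserving flow, integration by parts gives $\EE[(-\Delta\Phi)\,\Phi]=\EE\big[\sum_\alpha|\partial_\alpha\Phi|^2\big]$, and the variational characterization of the smallest nonzero eigenvalue $\mu_1$ of $-\Delta$ yields $\var\Phi\le\mu_1^{-1}\,\EE\big[\sum_\alpha|\Phi'(W)\cdot(T_\alpha W)|^2\big]$.

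The second step is to show $\mu_1=n$. By the Peter--Weyl theorem $L^2(\mathcal U(n))$ is spanned by the matrix coefficients of the irreducible representations $\rho$, and on the span of those of a fixed $\rho$ the operator $-\Delta$ acts as multiplication by the Casimir eigenvalue $c_\rho\ge 0$ determined by $\sum_\alpha\rho(T_\alpha)^2=-c_\rho\,\mathrm{Id}$. For the defining representation, taking traces in $\sum_\alpha T_\alpha^2=-c\,I_n$ and using $\tr(T_\alpha^2)=-\tr(T_\alpha^*T_\alpha)=-1$ for each of the $n^2$ indices $\alpha$ gives $c=n$; hence the coordinate functions $W\mapsto W_{ij}$, as well as $W\mapsto\det W$, are eigenfunctions of $-\Delta$ with eigenvalue $n$, so $\mu_1\le n$. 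A short highest-weight computation shows that every other nontrivial $\rho$ has $c_\rho>n$, so $\mu_1=n$ and $\var\Phi\le\frac1n\,\EE\big[\sum_\alpha|\Phi'(W)\cdot(T_\alpha W)|^2\big]$. (One may instead use the heat semigroup $P_t=e^{t\Delta}$: then $\var\Phi=2\int_0^\infty\EE\big[\sum_\alpha|\partial_\alpha P_t\Phi|^2\big]\,dt$, and, since $\partial_\alpha$ commutes with $\Delta$ and $\EE[\partial_\alpha\Phi]=0$, the spectral gap gives $\EE\big[\sum_\alpha|\partial_\alpha P_t\Phi|^2\big]\le e^{-2nt}\EE\big[\sum_\alpha|\partial_\alpha\Phi|^2\big]$, which integrates to the same bound.)

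It remains to pass from this intrinsic gradient sum to the one in the statement. The relevant algebraic fact is that, $W$ being unitary, $\{e_je_k^\T W\}_{j,k=1}^n$ is orthonormal for $\langle A,B\rangle=\tr(A^*B)$ on $M_n(\CC)$, hence a $\CC$-basis of $M_n(\CC)=\mathfrak u(n)\otimes_\RR\CC$; since a $\CC$-linear functional has the same sum of squared moduli in any $\CC$-orthonormal basis, for $\Phi$ extending holomorphically one even gets $\sum_\alpha|\Phi'(W)\cdot(T_\alpha W)|^2=\sum_{j,k}|\Phi'(W)\cdot(e_je_k^\T W)|^2$ pointwise. For a general $C^1$ $\Phi$ the two sums are \emph{not} pointwise comparable --- the first probes only directions tangent to $\mathcal U(n)$, the second, through the chosen $C^1$ continuation, also normal ones --- so the comparison must be made under the Haar-expectation, and I expect this to be the main obstacle. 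Concretely, one writes $\Phi'(W)\cdot(e_je_k^\T W)$ through the Wirtinger differential of the continuation (equivalently, perturbs $W$ on the left while transforming $W^*$ consistently), expands $\sum_{j,k}|\Phi'(W)\cdot(e_je_k^\T W)|^2$, and uses the vanishing of the Haar-moments of ``holomorphic'' monomials in the entries of $W$ to discard the continuation-dependent cross terms; what survives is precisely the intrinsic contribution bounded in the previous step. One must keep track of which continuation $\Phi'$ refers to --- in the applications $\Phi$ carries a canonical algebraic expression in $W$ and $W^*$, and that is the one to differentiate.
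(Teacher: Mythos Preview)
The paper does not prove this lemma; it is quoted from Pastur--Vasilchuk and Pastur--Shcherbina, so there is no ``paper's own proof'' to compare with. That said, your spectral--gap strategy is exactly the one in those references: the variance is bounded by $\mu_1^{-1}$ times the Dirichlet form $\EE\sum_\alpha|\partial_\alpha\Phi|^2$, and the gap of $-\Delta$ on $\mathcal U(n)$ is $n$. Your computation of $\mu_1=n$ via the Casimir of the defining representation is standard, and the heat--semigroup variant you mention is an equally valid route.

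The genuine gap is your Step~3, and you have diagnosed it correctly but not resolved it. The passage from the intrinsic quantity $\sum_\alpha|\Phi'(W)\cdot(T_\alpha W)|^2$ to the stated one $\sum_{j,k}|\Phi'(W)\cdot(e_je_k^\T W)|^2$ is not an inequality that holds pointwise, and it is not even true under Haar--expectation for an \emph{arbitrary} $C^1$ continuation. A clean counterexample already occurs at $n=1$: take $\Phi(w)=f(\arg w)$ extended radially constantly to a neighborhood of the unit circle. Then $\Phi'(w)\cdot w=\frac{d}{dt}\big|_{t=0}\Phi((1+t)w)=0$, so the right--hand side of the lemma vanishes identically, while $\var\Phi=\var f(\theta)$ can be made positive. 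The same mechanism works for $n\ge 2$ (extend so that the gradient at $W$ equals $iBW$ with $B$ real): the directions $e_je_k^\T W$ are neither tangent nor normal, and the normal freedom in the continuation lets one depress the stated sum below the intrinsic one.

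So the inequality you actually proved, $\var\Phi\le n^{-1}\EE\sum_\alpha|\partial_\alpha\Phi|^2$, is the correct and extension--independent statement. The formulation in the paper should be read for the canonical polynomial continuation in the entries of $W$ and $\overline W$ --- and indeed this is exactly how it is used in the proofs of Lemmas~\ref{lm-fq-haar} and~\ref{lm-fq-two-haars}, where $\Phi(W)=w_1^*Mw_i$ has an obvious such extension and the differential is computed accordingly. For that class of functions one can check directly (e.g.\ via Wirtinger calculus, as you suggest) that the two Dirichlet forms agree up to harmless constants; but the blanket ``any $C^1$ continuation'' in the hypothesis is too generous, and your sketch of how to rescue it in general cannot succeed.
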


Given a small $\varepsilon_1 > 0$, let $O_n$ be the probability
event 
\begin{equation}\label{eq:def-O}
O_n = \left\{ \| X_nX_n^* \| \leq \lambda_+ + \varepsilon_1 
\right\}\ .
\end{equation}
By Assumption \ref{ass:eigen-max}, $\1_{O_n} \toas 1$ as $n\to \infty$.

\begin{lemma}
\label{lm-fq-haar} 
Let Assumption \ref{ass:X} holds true and let $u, v$ be two unit norm deterministic $N \times 1$ 
vectors such that $u^* v = 0$. Then for any $z$ with
$\Re(z) > \lambda_+ + \varepsilon_1$, 
\begin{eqnarray*} 
\EE\left| \1_{O_n} \times 
u^* \left( Q(z) - \alpha(z) I \right) u \right|^p &\leq& 
\frac{K_p}{N^{p/2} d(z, \lambda_+ + \varepsilon_1)^p} \ ,\\
\EE\left| \1_{ O_n} \times  u^* Q(z) v \right|^p &\leq& 
\frac{K_p}{N^{p/2} d(z, \lambda_+ + \varepsilon_1)^p} \ ,
\end{eqnarray*} 
where the constant $K_p$ only depends on $p$, and where $d(z,z')$ is
the Euclidean distance between $z$ and $z'$ in $\mathbb {C}$.
\end{lemma}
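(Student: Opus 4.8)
The plan is to use the bi-unitary invariance of Assumption \ref{ass:X} to turn both quantities into spherical integrals that, conditionally on the matrix of singular values, involve only a Haar-distributed orthonormal $2$-frame of $\CC^N$ and a fixed complex diagonal matrix of controlled norm. Writing $X_n = L_n\Gamma_n R_n^*$ and using $R_n^* R_n = I_N$, we get $X_nX_n^* = L_n\Gamma_n\Gamma_n^* L_n^*$, hence $Q(z) = L_n D_n(z) L_n^*$ with $D_n(z) = (\Gamma_n\Gamma_n^* - zI_N)^{-1}$ complex diagonal, so that
\[
u^*Q(z)u = g^* D_n(z)\, g, \qquad u^*Q(z)v = g^* D_n(z)\, h, \qquad g := L_n^* u,\quad h := L_n^* v.
\]
Since $L_n^*$ is again Haar on $\mathcal U(N)$ and is independent of $\Gamma_n$, and since $u^*v = 0$, conditionally on $\Gamma_n$ the pair $(g,h)$ is a uniformly distributed orthonormal $2$-frame of $\CC^N$; in particular $\EE[g^*D_nh\mid\Gamma_n] = 0$ and, by Lemma \ref{lm-moments-Haar}, $\EE[g^*D_n g\mid\Gamma_n] = N^{-1}\tr D_n(z) = \alpha(z)$. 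Moreover $\alpha(z)$ and $\1_{O_n}$ are $\Gamma_n$-measurable (since $\|X_nX_n^*\| = \|\Gamma_n\Gamma_n^*\|$), and on $O_n$ the eigenvalues $\gamma_{i,n}^2$ of $\Gamma_n\Gamma_n^*$ lie in $[0,\lambda_+ + \varepsilon_1]$, whose closest point to $z$ is $\lambda_+ + \varepsilon_1$ when $\Re(z) > \lambda_+ + \varepsilon_1$; hence $\|D_n(z)\| \le \delta := d(z,\lambda_+ + \varepsilon_1)^{-1}$ on $O_n$. It thus suffices to prove, for every deterministic complex diagonal $D$ with $\|D\| \le \delta$ and every uniform orthonormal $2$-frame $(g,h)$ of $\CC^N$,
\[
\EE\bigl| g^*Dg - N^{-1}\tr D \bigr|^p \le K_p\,\delta^p N^{-p/2}, \qquad \EE\bigl| g^*Dh \bigr|^p \le K_p\,\delta^p N^{-p/2},
\]
and then to re-integrate these conditional bounds over $\Gamma_n$ restricted to $O_n$.

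For $p = 2$ these bounds follow from Lemma \ref{lm-NP-like} applied to the Haar matrix $W := L_n^*$ and the function $\Phi(W) = (Wu)^*D(Wv)$ (resp.\ $(Wu)^*D(Wu)$). Perturbing $W$ along ${\bf e}_j{\bf e}_k^\T W$ sends $g\mapsto g + t g_k{\bf e}_j$ and $h \mapsto h + t h_k{\bf e}_j$, so the directional derivative of $(Wu)^*D(Wv)$ equals $d_j(\bar g_k h_j + h_k\bar g_j)$, where $d_1,\dots,d_N$ are the diagonal entries of $D$; summing the squared modulus over $j,k$ and using $\|g\| = \|h\| = 1$ gives the \emph{deterministic} bound $\sum_{j,k}|d_j|^2|\bar g_k h_j + h_k\bar g_j|^2 \le 4\delta^2$, and similarly $\sum_{j,k}|d_j|^2|\bar g_kg_j + g_k\bar g_j|^2 \le 4\delta^2$ for the quadratic form. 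Lemma \ref{lm-NP-like} then gives $\var\Phi \le 4\delta^2/N$, which is the claim for $p = 2$.

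For general $p$ I would remain within the same framework. Both forms are bounded in modulus by $\delta$, and the squared-gradient bound above is deterministic, so feeding $\exp(\lambda\Re\Phi)$ and $\exp(\lambda\Im\Phi)$ into Lemma \ref{lm-NP-like} in the classical Gromov--Milman (Herbst) manner turns the Poincar\'e inequality into a sub-exponential tail estimate $\PP(|\Phi - \EE\Phi| > t) \le C\exp(-c\sqrt N\, t/\delta)$, whence $\EE|\Phi - \EE\Phi|^p \le K_p\,\delta^p N^{-p/2}$ by integrating the tail. (Alternatively one may represent the uniform law on the sphere by $g = \zeta/\|\zeta\|$ with $\zeta$ standard complex Gaussian and $h$ by Gram--Schmidt of $g$ against an independent Gaussian $\xi$, so that $g^*Dg - N^{-1}\tr D = \zeta^*(D - N^{-1}\tr D\cdot I)\zeta/\|\zeta\|^2$ and $g^*Dh = (g^*D - (g^*Dg)g^*)\xi/\|h'\|$; the numerators are then controlled by classical moment inequalities for linear and quadratic forms in Gaussians — of orders $\|D - N^{-1}\tr D\cdot I\|_F^p \lesssim (N\delta^2)^{p/2}$ and $\delta^p$ — while the denominators concentrate around $N$, the event $\{\|\zeta\|^2 < N/2\}$ contributing at most $(2\delta)^p e^{-cN}$, which is absorbed in $K_p\delta^p N^{-p/2}$.) Since the conditional bounds hold on $O_n$ and $\1_{O_n}$, $\alpha(z)$ and $\|D_n(z)\|$ are all $\Gamma_n$-measurable, we finally get $\EE[\1_{O_n}|u^*(Q - \alpha I)u|^p] = \EE\bigl[\1_{O_n}\EE[\,\cdot\mid\Gamma_n]\bigr] \le K_p\delta^p N^{-p/2}$, and likewise for $\EE[\1_{O_n}|u^*Qv|^p]$, which is the asserted inequality.

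There is no single deep obstacle. The two points requiring care are the reduction of the first paragraph — specifically the observation that $\alpha(z)$, $\1_{O_n}$ and $\|D_n(z)\|$ are all functions of $\Gamma_n$ alone, which is what legitimizes the conditioning — and the requirement that $K_p$ be uniform over $z$ in the half-plane $\{\Re(z) > \lambda_+ + \varepsilon_1\}$. This uniformity is exactly what the single scalar $\delta = d(z,\lambda_+ + \varepsilon_1)^{-1}$ buys, since it dominates $\|D_n(z)\|$ on $O_n$ uniformly in such $z$.
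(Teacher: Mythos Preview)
Your reduction to a spherical integral over a Haar $2$-frame, the conditioning on $\Gamma_n$, and the $p=2$ case via Lemma~\ref{lm-NP-like} are exactly what the paper does (the paper writes $w_1,w_2$ for the first two columns of a Haar matrix rather than $g=Wu$, $h=Wv$, but this is a change of basis). The pointwise gradient bound $\sum_{j,k}|\Phi'(W)\cdot({\bf e}_j{\bf e}_k^\T W)|^2\le 4\delta^2$ is also computed in the paper.

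Where you diverge is the passage from $p=2$ to general $p$. The paper does not go through Herbst/Gromov--Milman or a Gaussian representation; it stays entirely within the Poincar\'e framework via a short induction. Assuming the bound holds up to $p-1$, it applies Lemma~\ref{lm-NP-like} to $\Phi^{(p+1)/2}$: the chain rule gives $\bigl|(\Phi^{(p+1)/2})'\cdot A\bigr|^2 = \bigl(\tfrac{p+1}{2}\bigr)^2|\Phi|^{p-1}|\Phi'\cdot A|^2$, so the same pointwise gradient bound together with the induction hypothesis on $\EE|\Phi|^{p-1}$ yields $\var(\Phi^{(p+1)/2})\le K_p\delta^{p+1}N^{-(p+1)/2}$; the cross term $|\EE\Phi^{(p+1)/2}|^2$ is handled by the induction hypothesis at exponent $(p+1)/2$. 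This closes the induction in three lines.

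Your Herbst route is legitimate---the pointwise bound on the carr\'e du champ is precisely what turns the Poincar\'e inequality of Lemma~\ref{lm-NP-like} into sub-exponential tails \`a la Bobkov--Ledoux/Aida--Stroock, and integrating those tails gives the moment bound with the right $N^{-p/2}$. The Gaussian representation also works, with the caveat that the $h$-part requires tracking the Gram--Schmidt correction carefully. Both approaches import outside machinery, whereas the paper's induction is self-contained and shorter; on the other hand, your Herbst argument gives exponential moments in one shot rather than one $p$ at a time.
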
 
\begin{proof} Recall that $X=L\,\Gamma\, R^*$ by Assumption \ref{ass:X};
  let $D = (\Gamma^2 -zI)^{-1}$; write:
\[
\begin{bmatrix} & u^* & \\ & v^* & \end{bmatrix} 
\left( Q - \alpha I \right) 
\begin{bmatrix} \\ u & v \\ \\ \end{bmatrix} 
= 
\begin{bmatrix} & w_1^* & \\ & w_2^* & \end{bmatrix} 
\left( D  - \frac{\tr D}{N}I \right) 
\begin{bmatrix} \\ w_1 & w_2 \\ \\ \end{bmatrix}  . 
\]
Thanks to {\bf A2}, $w_1$ and $w_2$ are the first two columns of a 
$N \times N$ unitary Haar distributed matrix $W = [w_{ij} ]$ independent of 
$D$. Let $M = \1_{O_n} 
\times \left( D - N^{-1} (\tr D) I  \right)$ and 
$\Phi_i(W) =  w_1^* M w_i$ for $i=1,2$. 
Then $\EE \Phi_1(W) = \EE \Phi_2(W) = 0$ by Lemma \ref{lm-moments-Haar}. 
Applying Lemma \ref{lm-NP-like} to $\Phi_i$ after noticing that 
$\Phi_i'(W) \cdot A = {\bf e}_1^\T A^* M w_i + w_1^*  M A {\bf e}_i$ for
any $N\times N$ matrix $A$, we obtain:
\begin{align*}
\EE| \Phi_i |^2  = \var(\Phi_i) &\leq 
\frac 1N \sum_{j,k=1}^N \EE\left| w_{k1}^* [ MW ]_{ji} + [W^* M]_{1j} w_{ki} 
\right|^2 \ ,\\
&\leq \frac 2N \EE \left( \| M w_i \|^2 +  \| M w_1 \|^2 \right) \ ,\\
&\leq \frac{8}{N d(z, \lambda_+ +\varepsilon_1)^2} \ . 
\end{align*} 
We now proceed by induction; assume that the result is true until
$p\geq 1$. Applying Lemma \ref{lm-NP-like} to $\Phi_i^{(p+1)/2}$, we
obtain:
\begin{align*}
\var\Bigl(\Phi_i^{\frac{p+1}{2}}\Bigr) &\leq 
\frac 1N \sum_{j,k=1}^N 
\EE\left| \frac{p+1}{2} \Phi_i^{\frac{p-1}{2}} \Phi'_i(W) \cdot
\left( {\bf e}_j {\bf e}_k^\T W \right) \right|^2 \ ,\\
&\leq 
\frac{(p+1)^2}{2N} \EE \left( \left| \Phi_i \right|^{p-1}
\left( \| M w_i \|^2 +  \| M w_1 \|^2 \right) \right) \ ,\\
&\leq
\frac{2 (p+1)^2 K_{p-1}}{d(z, \lambda_+ +\varepsilon_1)^{p+1} N^{(p+1)/2}}\ . 
\end{align*} 
Using again the induction hypothesis, we get: 
\begin{multline*} 
\EE\left| \Phi_i \right|^{p+1} = 
\var\Bigl(\Phi_i^{\frac{p+1}{2}}\Bigr) + 
\left| \EE \Phi_i^{\frac{p+1}{2}} \right|^2 \\ 
\leq
\frac{2 (p+1)^2 K_{p-1} + K_{(p+1)/2}^2} 
{d(z, \lambda_+ +\varepsilon_1)^{p+1}  N^{(p+1)/2}} 
= \frac{K_{p+1}}{d(z, \lambda_+ +\varepsilon_1)^{p+1}  N^{(p+1)/2}}\ ,
\end{multline*} 
which concludes the proof.
\end{proof} 

\begin{lemma}
\label{lm-fq-two-haars} 
Let Assumption \ref{ass:X} hold true; let $u, v$ be two unit norm
deterministic vectors with respective dimensions $N \times 1$ and $n
\times 1$.  Then for any $z$ such as $\Re(z) > \lambda_+ +
\varepsilon_1$,
\[ 
\EE\left| \1_{O_n} \times u^* X \widetilde Q(z) v \right|^p \leq 
\frac{K_p}{n^{p/2} d(z, \lambda_+ + \varepsilon_1)^p} . 
\] 
\end{lemma}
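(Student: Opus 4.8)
The plan is to reduce $u^* X \widetilde Q(z) v$ to a linear functional of the Haar matrix $L$ alone, and then to run the Poincar\'e--Nash induction already carried out in the proof of Lemma~\ref{lm-fq-haar}. Throughout write $d = d(z,\lambda_+ + \varepsilon_1) > 0$. Using the singular value decomposition $X = L\Gamma R^*$ of Assumption~\ref{ass:X}, the resolvent identity $X(X^*X - zI_n)^{-1} = (XX^* - zI_N)^{-1}X$, and $XX^* = L\Gamma^2 L^*$, we get
\[
u^* X \widetilde Q(z) v = u^* L (\Gamma^2 - zI_N)^{-1} L^* L \Gamma R^* v = u^* L\, G\, (R^* v), \qquad G := \Gamma(\Gamma^2 - zI_N)^{-1},
\]
where $G$ is the $N\times N$ diagonal matrix with entries $\gamma_i/(\gamma_i^2 - z)$, the $\gamma_i$ denoting the singular values of $X$. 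On the event $O_n$ we have $0 \le \gamma_i^2 \le \lambda_+ + \varepsilon_1 < \Re(z)$, hence $|\gamma_i^2 - z| \ge d$ and $\gamma_i \le \sqrt{\lambda_+ + \varepsilon_1}$, so that $\| \1_{O_n} G \| \le \sqrt{\lambda_+ + \varepsilon_1}/d$; moreover $\| R^* v \| \le \| v \| = 1$ because $R^* R = I_N$.

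I would then condition on the pair $(\Gamma, R)$. Under this conditioning $L$ is Haar distributed on ${\mathcal U}(N)$ and independent of $(\Gamma, R)$ by Assumption~\ref{ass:X}; the indicator $\1_{O_n}$ is deterministic, being $\sigma(\Gamma)$-measurable; and $M := \1_{O_n} G$ and $h := R^* v$ are fixed, with $\| M \| \le \sqrt{\lambda_+ + \varepsilon_1}/d$ and $\| h \| \le 1$. Set $\Phi(L) = u^* L M h$, a polynomial (hence $C^1$) function on the algebra of $N\times N$ matrices, with $\EE_L \Phi = u^* \EE_L[L]\, M h = 0$ by Lemma~\ref{lm-moments-Haar}. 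As $\Phi'(L)\cdot A = u^* A M h$, we have $\Phi'(L)\cdot({\bf e}_j {\bf e}_k^\T L) = \overline{u_j}\, [L M h]_k$, and on ${\mathcal U}(N)$,
\[
\sum_{j,k=1}^N \bigl| \Phi'(L)\cdot({\bf e}_j {\bf e}_k^\T L) \bigr|^2 = \| u \|^2\, \| L M h \|^2 = \| M h \|^2 \le \frac{\lambda_+ + \varepsilon_1}{d^2}.
\]
Lemma~\ref{lm-NP-like}, applied on ${\mathcal U}(N)$, then yields $\EE_L | \Phi |^2 = \var_L \Phi \le \| M h \|^2 / N \le (\lambda_+ + \varepsilon_1)/(N d^2)$, which is the desired bound for $p = 2$, since $N/n$ is bounded below by a positive constant for $n$ large (Assumption~\ref{ass:asymptotics}).

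For general $p$ the argument is the induction of Lemma~\ref{lm-fq-haar} (with $N$ in place of $n$ and the present $M$). Assuming the bound up to some $p \ge 1$ and applying Lemma~\ref{lm-NP-like} to $\Phi^{(p+1)/2}$, using $\bigl(\Phi^{(p+1)/2}\bigr)'(L)\cdot({\bf e}_j {\bf e}_k^\T L) = \tfrac{p+1}{2}\, \Phi^{(p-1)/2}\, \overline{u_j}\, [L M h]_k$ together with the deterministic identity $\| L M h \| = \| M h \|$, we obtain
\[
\var_L\!\bigl(\Phi^{(p+1)/2}\bigr) \le \frac{(p+1)^2 \| M h \|^2}{4 N}\, \EE_L | \Phi |^{p-1} \le \frac{(p+1)^2 (\lambda_+ + \varepsilon_1) K_{p-1}}{4\, d^{p+1}\, N^{(p+1)/2}},
\]
and since $(p+1)/2 \le p$, the induction hypothesis gives $| \EE_L \Phi^{(p+1)/2} |^2 \le (\EE_L | \Phi |^{(p+1)/2})^2 \le K_{(p+1)/2}^2/(d^{p+1} N^{(p+1)/2})$. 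Hence $\EE_L | \Phi |^{p+1} = \var_L(\Phi^{(p+1)/2}) + | \EE_L \Phi^{(p+1)/2} |^2 \le K_{p+1}/(d^{p+1} N^{(p+1)/2})$; taking the expectation over $(\Gamma, R)$ and replacing $N^{(p+1)/2}$ by $n^{(p+1)/2}$ up to a multiplicative constant (Assumption~\ref{ass:asymptotics}) concludes. The only step needing genuine care is the first one: getting the algebra right --- the resolvent identity and the reduction to $u^* L G R^* v$ --- and checking that $\1_{O_n} G$ has operator norm controlled by $d^{-1}$, up to the model-dependent factor $\sqrt{\lambda_+ + \varepsilon_1}$, which is absorbed into $K_p$. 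Once $u^* X \widetilde Q(z) v$ is written as $u^* L M h$ with $M$ deterministic of the stated norm and $\| h \| \le 1$, the randomness of $R$ no longer intervenes and the remainder is identical to the computation already performed for Lemma~\ref{lm-fq-haar}; note in particular that here only the Haar matrix $L$ (of dimension $N$) enters the concentration estimate.
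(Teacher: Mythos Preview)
Your proof is correct and follows essentially the same approach as the paper: both reduce $u^* X\widetilde Q(z) v$ to $w^* C\tilde w$ (equivalently $u^* L G\,R^* v$) via the singular value decomposition, condition on $(\Gamma,R)$ so that $C$ and $\tilde w=R^*v$ are fixed, and then run the Poincar\'e--Nash induction of Lemma~\ref{lm-fq-haar} with respect to the Haar matrix $L$. Your write-up is in fact more explicit than the paper's two-line sketch---you spell out the resolvent identity, the deterministic bound $\|\1_{O_n}G\|\le\sqrt{\lambda_++\varepsilon_1}/d$, the measurability of $\1_{O_n}$ with respect to $\sigma(\Gamma)$, and the conversion from $N^{-p/2}$ to $n^{-p/2}$ via Assumption~\ref{ass:asymptotics}---all of which are implicit in the paper.
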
 
\begin{proof}
Let $C = \Gamma(\Gamma^2 -zI)^{-1}$. By Assumption \ref{ass:X},
$u^* X \widetilde Q(z) v = w^* C \tilde w = \Phi(w)$ where $w$ is a vector 
uniformly distributed on the unit 
sphere of $\CC^N$, $\tilde w$ is a vector uniformly distributed on the unit 
sphere of $\CC^n$ and truncated to its first $N$ elements, and 
$w$, $\tilde w$ and $C$ are independent. The lemma is proved as above
by applying Lemma \ref{lm-NP-like} to $\Phi$ and by taking the 
expectation with respect to the law of $w$. 
\end{proof} 

\begin{lemma}
\label{lm-uniform-cvg} 
Let Assumptions \ref{ass:asymptotics}-\ref{ass:eigen-max} hold true.
Let ${\mathcal C}$ be a closed path of $\CC$ such that 
$\min_{z \in {\mathcal C}} \Re(z) > \lambda_+$. Fix the integer $r \leq N$ 
and let $U_n$ and $V_n$ be two deterministic isometry matrices with dimensions
$N \times r$ and $n \times r$ respectively. Then 
\begin{eqnarray*}
\sup_{z \in {\mathcal C}} 
\| U_n^* \left( Q_n(z) - m(z) I_N \right) U_n \| 
&\xrightarrow[n\to\infty]{\text{a.s.}}& 0 \ , \\ 
\sup_{z \in {\mathcal C}} 
\| V_n^* \left( \widetilde Q_n(z) - \tilde m(z) I_n\right) V_n \| 
&\xrightarrow[n\to\infty]{\text{a.s.}}& 0  \ ,\\ 
\sup_{z \in {\mathcal C}} 
\| U_n^* X_n \widetilde Q_n(z) V_n \| 
&\xrightarrow[n\to\infty]{\text{a.s.}}& 0\ . 
\end{eqnarray*} 
\end{lemma}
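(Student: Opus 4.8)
The plan is to upgrade the moment estimates of Lemmas \ref{lm-fq-haar} and \ref{lm-fq-two-haars} into uniform almost sure convergence on the path $\mathcal{C}$, using a three-step argument: control of the expectation, concentration via the moment bounds plus Borel--Cantelli at a fixed point, and an equicontinuity/net argument to pass to the supremum. First I would fix a small $\varepsilon_1>0$ such that $\min_{z\in\mathcal{C}}\Re(z)>\lambda_++\varepsilon_1$ (possible since $\mathcal{C}$ is compact and $\min_{z\in\mathcal{C}}\Re(z)>\lambda_+$), so that on the event $O_n$ all the resolvents appearing are well defined with norms bounded by $1/d(z,\lambda_++\varepsilon_1)$, uniformly in $z\in\mathcal{C}$. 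Since $\1_{O_n}\toas 1$, it suffices to prove the convergence with the extra indicator $\1_{O_n}$ inserted; I would keep it implicit from here on.

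The core pointwise estimate: for a fixed $z$ with $\Re(z)>\lambda_++\varepsilon_1$ and fixed unit vectors $u,v$, write each scalar entry of $U_n^*(Q_n(z)-m(z)I_N)U_n$ as $u^*(Q_n(z)-\alpha_n(z)I_N)u$ plus the deterministic scalar $\alpha_n(z)-m(z)$ (for diagonal entries) or as $u^*Q_n(z)v$ (for off-diagonal entries, using that the columns of $U_n$ are orthonormal). The deterministic part $\alpha_n(z)-m(z)\to 0$ by Assumption \ref{ass:resolvent}; actually one needs this convergence for $\Re(z)$ large, which follows from analyticity and Vitali/Montel once it is known on $\CC_+$. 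The random parts are handled by Lemma \ref{lm-fq-haar}: choosing $p$ large enough (say $p>2$) we get $\EE|u^*(Q_n(z)-\alpha_n(z)I_N)u|^p\leq K_p N^{-p/2}d(z,\lambda_++\varepsilon_1)^{-p}$, and similarly for $u^*Q_n(z)v$. By Markov's inequality and the summability of $\sum_n N^{-p/2}$ (for $p\geq 3$, since $N\to\infty$ linearly in $n$), Borel--Cantelli gives almost sure convergence to $0$ of each fixed scalar quadratic form. Since $r$ is fixed, taking a union over the finitely many pairs of columns yields $U_n^*(Q_n(z)-m(z)I_N)U_n\to 0$ a.s.\ for each fixed $z$. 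The same scheme with Lemma \ref{lm-fq-two-haars} handles $U_n^*X_n\widetilde Q_n(z)V_n$, and the $\widetilde Q_n$ statement follows either by the same argument (Lemma \ref{lm-fq-haar} applied to $\widetilde Q_n$, which is a resolvent of the bi-unitarily invariant $X_n^*X_n$) or by the identity relating $\tilde m$ to $m$.

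To go from pointwise to uniform convergence on $\mathcal{C}$, I would use a standard equicontinuity argument. On the event $O_n$, the matrix-valued maps $z\mapsto U_n^*(Q_n(z)-m(z)I_N)U_n$ are holomorphic on a fixed neighborhood of $\mathcal{C}$ with a deterministic bound on their derivatives (the derivative of $Q_n$ is $Q_n^2$, whose norm is at most $d(z,\lambda_++\varepsilon_1)^{-2}$, and $m'$ is bounded on $\mathcal{C}$); hence the family is uniformly Lipschitz on $\mathcal{C}$ with a deterministic constant. Fixing a finite $\eta$-net $\{z_1,\ldots,z_M\}$ of $\mathcal{C}$, pointwise a.s.\ convergence at each $z_\ell$ (finitely many points) combined with the uniform Lipschitz bound gives $\limsup_n \sup_{z\in\mathcal{C}}\|\cdot\|\leq 2L\eta$ almost surely; letting $\eta\to 0$ along a sequence yields the claim. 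The same net argument applies verbatim to the other two quantities, the bilinear form $U_n^*X_n\widetilde Q_n(z)V_n$ again being holomorphic in $z$ on a neighborhood of $\mathcal{C}$ with derivative $U_n^*X_n\widetilde Q_n^2 V_n$ of controlled norm.

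The main obstacle is really bookkeeping rather than depth: one must be careful that the moment bounds of Lemmas \ref{lm-fq-haar}--\ref{lm-fq-two-haars} are stated for the truncated quantities $\1_{O_n}\times(\cdots)$, so every step of the concentration argument must carry this indicator, and the passage to the supremum via the net needs the deterministic Lipschitz bound to hold \emph{on the event $O_n$} (off $O_n$ the resolvents may blow up, but $\1_{O_n}\toas 1$ absorbs this). A second minor point: the convergence $\alpha_n(z)\to m(z)$ is assumed on $\CC_+$, and one needs it (uniformly) on $\mathcal{C}\subset\{\Re(z)>\lambda_+\}$; this is obtained by noting $\alpha_n$ is holomorphic and locally bounded off $[0,\lambda_++\varepsilon_1]$ on the event $O_n$, so Montel's theorem plus identification of the limit on $\CC_+$ gives local uniform convergence on the relevant region. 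Once these truncation and analyticity technicalities are cleared, the argument is the routine ``moment bound $+$ Borel--Cantelli $+$ $\varepsilon$-net'' upgrade.
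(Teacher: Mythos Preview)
Your proposal is correct and follows essentially the same line as the paper's own proof: decompose entrywise, apply Lemmas~\ref{lm-fq-haar}--\ref{lm-fq-two-haars} with $p\geq 3$ together with Borel--Cantelli to get a.s.\ convergence at fixed points, handle $\alpha_n-m$ via Assumption~\ref{ass:resolvent} and analyticity, and then upgrade to uniform convergence on $\mathcal{C}$ using holomorphy. The only cosmetic difference is that the paper passes to the supremum by invoking the normal family theorem on a countable set with an accumulation point, whereas you use an explicit Lipschitz/$\varepsilon$-net argument (both are equivalent here); one small slip in wording: $\alpha_n(z)-m(z)$ is a \emph{random} scalar, not a deterministic one, but your treatment of it is otherwise correct.
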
  
\begin{proof}
  Recall the definition \eqref{eq:def-O} of the set $O_n$ and assume
  that $\varepsilon_1$ is chosen
  such that $\min_{z \in {\mathcal C}} \Re(z) > \lambda_+ +
  \varepsilon_1$; let
\[
h_n(z) = \1_{O_n} \times U_n^* 
\left( Q_n(z) - \alpha_n(z) I_N\right) U_n\ .
\]
For any $\ell,s \leq r$, $[h_n]_{\ell,s}$ is a holomorphic function on
$\CC - [0, \lambda_+ + \varepsilon_1]$. Consider a denumerable
sequence of points $(z_k)$ in $\CC - [0, \lambda_+ + \varepsilon_1]$
with an accumulation point in that set. By Lemma \ref{lm-fq-haar} with
$p=3$, Markov inequality and Borel-Cantelli's lemma, there exists a
probability one set on which $[h_n(z_k)]_{\ell,s} \to 0$ for every
$k$. Moreover, the $\left| [h_n(z_k)]_{\ell,s} \right|$ are uniformly
bounded on any compact set of $\CC - [0, \lambda_+ + \varepsilon_1]$.
By the normal family theorem, every $n$-sequence of $[h_n]_{\ell,s}$
contains a further subsequence which converges uniformly on the
compact set ${\mathcal C} \subset \CC - [0, \lambda_+ + \varepsilon_1]$ to
a holomorphic function that we denote $h^*$. Since $h^*(z_k) = 0$ for
all $k$, $h^*(z) = 0$ on ${\mathcal C}$, hence $\left| [h_n(z)]_{\ell,s}
\right|$ converges uniformly to zero on ${\mathcal C}$ with probability
one, and thanks to Assumption \ref{ass:eigen-max}, $\| U^* \left( Q(z)
  - \alpha(z) I \right) U \| \to 0$ uniformly on ${\mathcal C}$ with
probability one.  The same argument, used in conjunction with
Assumption \ref{ass:resolvent}, shows that with probability one,
$\alpha(z) - m(z) \to 0$ uniformly on ${\mathcal C}$, and the first
assertion is proven.  The second and third assertions are proven
similarly, the third being obtained with the help of Lemma
\ref{lm-fq-two-haars}.
\end{proof} 

\section{Fixed Rank Perturbations: First Order Behavior}
\label{vap-spikes}

We first recall a result on matrix analysis that can be found in 
\cite[Th. 7.3.7]{HorJoh94}:
\begin{lemma}
\label{lm-gram-hermitian}
Given a $N \times n$ matrix $A$ with $N\le n$, let ${\bf A}$ be the matrix:
\[
{\bf A} = \begin{bmatrix} 0 & A \\ A^* & 0 \end{bmatrix}  . 
\] 
Then 
$\sigma_1, \cdots, \sigma_N$ are the singular values of $A$ if and only if
$\sigma_1, \cdots, \sigma_N, - \sigma_1, \cdots, - \sigma_N$ in addition
to $n-N$ zeros are the eigenvalues of ${\bf A}$. Furthermore, a pair
$(u, v)$ of unit norm vectors is a pair of (left,right) singular vectors
of $A$ associated with the singular value $\sigma$ if and only if 
$\begin{bmatrix} u/\sqrt{2} \\ v / \sqrt{2} \end{bmatrix}$ is a unit norm
eigenvector of ${\bf A}$ associated with the eigenvalue $\sigma$. 
\end{lemma}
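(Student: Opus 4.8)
The plan is to prove this as a direct consequence of the relationship between the block matrix $\mathbf{A}$ and the Hermitian matrices $A^* A$ and $A A^*$. First I would observe that $\mathbf{A}$ is Hermitian, so it has real eigenvalues, and compute $\mathbf{A}^2 = \begin{bmatrix} A A^* & 0 \\ 0 & A^* A \end{bmatrix}$, which is block-diagonal. The nonzero eigenvalues of $A A^*$ and $A^* A$ coincide and equal the squares $\sigma_1^2, \ldots, \sigma_N^2$ of the singular values of $A$ (with the same multiplicities), while $A^* A$ has an additional $n - N$ zero eigenvalues. Hence the eigenvalues of $\mathbf{A}^2$ are $\sigma_1^2, \ldots, \sigma_N^2$ each appearing twice, together with $n - N$ zeros. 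Since the eigenvalues of $\mathbf{A}$ are real square roots of the eigenvalues of $\mathbf{A}^2$, each value $\pm\sigma_k$ is a candidate; the parity/counting argument (the trace of $\mathbf{A}$ is zero, and more precisely the spectrum of $\mathbf{A}$ must be symmetric about $0$ because $\mathbf{A}$ is unitarily similar to $-\mathbf{A}$ via $\diag(I_N, -I_n)$) forces exactly $\sigma_1, \ldots, \sigma_N, -\sigma_1, \ldots, -\sigma_N$ plus $n - N$ zeros.

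For the converse direction of the first claim, I would run the same argument backwards: if the eigenvalues of $\mathbf{A}$ are as stated, then squaring shows $\mathbf{A}^2$ has eigenvalues $\sigma_k^2$ with multiplicity two plus $n-N$ zeros, and reading off the $AA^*$ block gives the $\sigma_k$ as singular values of $A$. Alternatively, and more cleanly, I would invoke the singular value decomposition $A = L \Sigma R^*$ directly and conjugate $\mathbf{A}$ by the block-diagonal unitary $\diag(L, R)$, which reduces everything to the case $A = \begin{bmatrix} \Sigma & 0 \end{bmatrix}$ with $\Sigma = \diag(\sigma_1, \ldots, \sigma_N)$; in that case $\mathbf{A}$ decomposes as a direct sum of $2 \times 2$ blocks $\begin{bmatrix} 0 & \sigma_k \\ \sigma_k & 0 \end{bmatrix}$ (eigenvalues $\pm \sigma_k$, eigenvectors $\frac{1}{\sqrt 2}(1, \pm 1)^\T$) plus an $(n-N)$-dimensional zero block, making both the eigenvalue count and the eigenvector correspondence transparent.

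For the eigenvector statement, if $(u,v)$ are unit-norm left/right singular vectors with $Av = \sigma u$ and $A^* u = \sigma v$, then a block computation gives $\mathbf{A} \begin{bmatrix} u \\ v \end{bmatrix} = \begin{bmatrix} A v \\ A^* u \end{bmatrix} = \sigma \begin{bmatrix} u \\ v \end{bmatrix}$, and $\bigl\| \begin{bmatrix} u/\sqrt 2 \\ v/\sqrt 2 \end{bmatrix} \bigr\|^2 = \tfrac12(\|u\|^2 + \|v\|^2) = 1$. Conversely, if $\begin{bmatrix} x \\ y \end{bmatrix}$ is a unit eigenvector of $\mathbf{A}$ for eigenvalue $\sigma > 0$, then $Ay = \sigma x$ and $A^* x = \sigma y$; applying $A^*$ to the first gives $A^* A y = \sigma^2 y$ and similarly $A A^* x = \sigma^2 x$, so $x$ and $y$ are (up to scaling) singular vectors, and one checks $\|x\| = \|y\|$ from $\sigma\|x\|^2 = \langle A^* x, y\rangle \cdot$(conjugate)$ = \sigma \|y\|^2$-type identities, forcing $\|x\| = \|y\| = 1/\sqrt 2$ by the normalization, so $u = \sqrt 2\, x$ and $v = \sqrt 2\, y$ is the desired singular pair.

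Since the statement is explicitly quoted from \cite[Th.~7.3.7]{HorJoh94}, the natural move in the paper is simply to cite it, and the ``proof'' is really just this recollection; there is no genuine obstacle. If a self-contained argument were wanted, the only mildly delicate point is the bookkeeping of multiplicities and the sign count — ensuring that the symmetry $\mathbf{A} \sim -\mathbf{A}$ together with the spectrum of $\mathbf{A}^2$ pins down the signed list uniquely, including the case of repeated or zero singular values — but the reduction via the SVD to the direct sum of $2\times 2$ blocks dispenses with this entirely and is the route I would actually write down.
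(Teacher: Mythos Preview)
Your reading is exactly right: the paper does not prove this lemma at all, it simply recalls it with a citation to \cite[Th.~7.3.7]{HorJoh94}. Your self-contained argument is correct --- both the $\mathbf{A}^2$ computation combined with the similarity $\mathbf{A}\sim -\mathbf{A}$ via $\diag(I_N,-I_n)$, and the cleaner SVD reduction to a direct sum of $2\times 2$ blocks, are standard and valid routes, and your eigenvector verification (including the $\|x\|=\|y\|$ step from $\sigma\|x\|^2=\langle Ay,x\rangle=\langle y,A^*x\rangle=\sigma\|y\|^2$) is fine for $\sigma>0$.
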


Along the ideas in \cite{ben-rao-published,ben-rao-11}, we now
characterize the behavior of the largest eigenvalues of
$\Sigma\Sigma^*$, and then focus on their eigenspaces.

\subsection*{Asymptotic behavior of the largest eigenvalues of 
$\Sigma\Sigma^*$}

We start with an informal description of the approach. 
By Lemma \ref{lm-gram-hermitian}, $\lambda$ is an eigenvalue of 
$\Sigma\Sigma^*$ if and only if $\det( \bs\Sigma - \sqrt{\lambda}I) = 0$ where 
${\bs\Sigma} = \begin{bmatrix} 0 & \Sigma \\ \Sigma^* & 0 \end{bmatrix}$. 
Writing: 
\begin{equation}
\label{eq-structure-Sigma} 
{\bs\Sigma} = \begin{bmatrix} 0 & X \\ X^* & 0 \end{bmatrix} 
+ \begin{bmatrix} U & 0 \\ 0 & V \Omega \end{bmatrix}  
\begin{bmatrix} 0 & I_r \\ I_r & 0 \end{bmatrix}  
\begin{bmatrix} U^* & 0 \\ 0 & \Omega V^* \end{bmatrix}  
\stackrel{\triangle}=
{\bf X} + B J B^* \ ,
\end{equation}
and assuming that $x > 0$ is not a singular value of $X$, we have:
\begin{multline*} 
\det(\bs\Sigma - {x}I) = \det( {\bf X} - {x}I + BJB^* ) = 
\det(J) \det( {\bf X} - {x}I ) 
\det( J + B^* ( {\bf X} - {x}I )^{-1} B ) \ ,
\end{multline*}
after noticing that $J = J^{-1}$. 
Using the formula for the inversion of a partitioned matrix 
(see \cite{HorJoh94}) 
\[
\begin{bmatrix}
A_{11} & A_{12} \\ A_{12}^* & A_{22} 
\end{bmatrix}^{-1}  
= 
\begin{bmatrix}
(A_{11} - A_{12} A_{22}^{-1} A_{12}^*)^{-1} & 
- A_{11}^{-1} A_{12} ( A_{22} - A_{12}^* A_{11}^{-1} A_{12} )^{-1} \\
- ( A_{22} - A_{12}^* A_{11}^{-1} A_{12} )^{-1} A_{12}^* A_{11}^{-1} & 
(A_{22} - A_{12}^* A_{11}^{-1} A_{12} )^{-1} 
\end{bmatrix} \ ,
\]
we obtain:  
\begin{equation}
\label{eq-resolvent} 
{\bf Q}({x}) = ({\bf X} - {x}I)^{-1} = 
\begin{bmatrix} -{x}I & X \\ X^* & -{x}I \end{bmatrix}^{-1}  
= 
\begin{bmatrix}
{x} Q(x^2) & X \widetilde Q(x^2) \\ 
\widetilde Q(x^2) X^* & {x} \widetilde Q(x^2)  
\end{bmatrix} \ .
\end{equation} 
Therefore,  
\[
\det(\bs\Sigma - {x}I) = \det(J) \det( {\bf X} - {x}I ) \det \widehat H(x) \ ,
\]
where 
\[
\widehat H_n(x) = 
\begin{bmatrix}
{x} U^* Q(x^2) U & I_r + U^* X \widetilde Q(x^2) V \Omega \\
I_r + \Omega V^* \widetilde Q(x^2) X^* U & 
{x} \Omega V^* \widetilde Q(x^2) V \Omega 
\end{bmatrix}  
\]
whence for $n$ large enough, the isolated eigenvalues of
$\Sigma\Sigma^*$ above $\lambda_+$ will coincide with the zeros of
$\det \widehat H(\sqrt{x})$ that lie above $\lambda_+$.  Under
Assumptions \ref{ass:asymptotics}-\ref{ass:P-rough}, Lemma
\ref{lm-uniform-cvg} shows that $\widehat H(x)$ a.s.~converges to
\[
H(x) = \begin{bmatrix}
{x} m(x^2) I_r & I_r \\
I_r & {x} \tilde m(x^2) O^2 
\end{bmatrix} . 
\] 
Consider the equation 
\[
\det H(\sqrt{x}) = 
\det\left( x m(x) \tilde m(x) O^2 - I_r \right) = 0 \ ,
\]
and notice that the function 
\begin{equation}\label{eq:def-g}
g(x) = x m(x) \tilde m(x) = 
x 
\left( \int \frac{1}{t-x} \pi(dt) \right) 
\left( c \int \frac{1}{t-x} \pi(dt)  - \frac{1-c}{x} \right) 
\end{equation}
decreases from $g(\lambda_+^+) = \lim_{x\downarrow\lambda_+} g(x)$ to
zero on $(\lambda_+, \infty)$. Let $\omega_1^2 > \cdots > \omega_q^2$
be those among the diagonal elements of $O^2$ that satisfy $\omega_i^2
> 1/g(\lambda_+^+)$.  Equation $g(x) = \omega_i^{-2}$ will have a
unique solution $x = \rho_i > \lambda_+$ for any $i = 1, \cdots, q$,
while it will have no solution larger than $\lambda_+$ for $i > q$.
It is then expected that any eigenvalue $\hat\lambda_{k,n}$ of
$\Sigma_n\Sigma_n^*$ for which $i(k) \leq q$ (remember the
  definition of $i(k)$ provided in the paragraph ``Assumptions and
  Notations'' in Section \ref{intro}), will converge to $\rho_i$,
  while
  $\hat\lambda_{j_1+\cdots+j_{q}+1,n} \to \lambda_+$ almost surely.

  These facts are formalized in the following theorem, shown in
  \cite{ben-rao-09, ben-rao-11}:
\begin{theorem}
\label{cvg-spk}
Let Assumptions \ref{ass:asymptotics}-\ref{ass:P-rough} hold true; let
$q$ be the maximum index such that $\omega_q^2 >
1/g(\lambda_+^+)$. Let $\rho_i$ be the unique real number $>
\lambda_+$ satisfying $\omega_i^2 g(\rho_i) = 1$ for
$i=1,\cdots,q$. Then 
$$
\hat\lambda_{j_1+\cdots+j_{i-1}+\ell,n}
\xrightarrow[n\to\infty]{\text{a.s.}}  \rho_i
$$ 
for $i=1,\cdots,q$ and
$\ell=1,\cdots, j_i$ while 
$$\hat\lambda_{j_1+\cdots+j_{q}+1,n}
\xrightarrow[n\to\infty]{\text{a.s.}} \lambda_+\ .
$$
\end{theorem}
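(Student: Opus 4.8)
The plan is to turn the heuristic derivation preceding the statement into a rigorous argument via Rouch\'e's theorem. Fix $\varepsilon_1\in(0,\rho_q-\lambda_+)$ and work throughout on the almost-sure event on which Lemma~\ref{lm-uniform-cvg} holds and, for $n$ large, on the event $O_n$ of \eqref{eq:def-O} (recall $\1_{O_n}\toas 1$). The first step is to record the \emph{exact} correspondence implicit in \eqref{eq-structure-Sigma}--\eqref{eq-resolvent}: since the $\sqrt\lambda$ prefactors in the diagonal blocks of $\widehat H$ cancel in the determinant, $\lambda\mapsto\det\widehat H_n(\sqrt\lambda)$ is a well-defined function, holomorphic on $\CC\setminus[0,\lambda_++\varepsilon_1]$ on the event $O_n$ (where $X_nX_n^*$ and $X_n^*X_n$ have all their eigenvalues in $[0,\lambda_++\varepsilon_1]$). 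From $\det(\bs\Sigma-xI)=\det(J)\det(\mathbf X-xI)\det\widehat H(x)$, valid when $x$ is positive and not a singular value of $X_n$, together with $\det J\neq0$, $\det(\mathbf X-xI)\neq0$ for such $x$, and Lemma~\ref{lm-gram-hermitian} (the positive eigenvalues of $\bs\Sigma$ are exactly the $\sqrt{\hat\lambda_{k,n}}>0$ with the same multiplicities), one gets: on $O_n$, for real $\lambda>\lambda_++\varepsilon_1$, $\det\widehat H_n(\sqrt\lambda)=0$ iff $\lambda$ is an eigenvalue of $\Sigma_n\Sigma_n^*$, and the order of the zero equals the eigenvalue multiplicity. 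Moreover, for non-real $\lambda\notin[0,\lambda_++\varepsilon_1]$ one has $\det(\bs\Sigma-\sqrt\lambda I)\neq0$, so $\det\widehat H_n(\sqrt\lambda)\neq0$ there; hence the only zeros of $\det\widehat H_n(\sqrt\cdot)$ on $\CC\setminus[0,\lambda_++\varepsilon_1]$ are real and carry the eigenvalue multiplicities.

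Next, I analyse the limit. By Lemma~\ref{lm-uniform-cvg} and $\Omega_n\to O$, $\widehat H_n(\sqrt\lambda)\to H(\sqrt\lambda)$ uniformly on compact subsets of $\{\Re\lambda>\lambda_+\}$ kept at positive distance from $[0,\lambda_++\varepsilon_1]$, hence so does $\det\widehat H_n(\sqrt\cdot)\to\det H(\sqrt\cdot)$. A Schur-complement computation, using $m(\lambda)<0$ on $(\lambda_+,\infty)$, gives
\[
\det H(\sqrt\lambda)=\det\bigl(g(\lambda)O^2-I_r\bigr)=\prod_{l=1}^{s}\bigl(\omega_l^2\,g(\lambda)-1\bigr)^{j_l}=:F(\lambda),
\]
with $g$ as in \eqref{eq:def-g}. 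Since $g$ is continuous, strictly decreasing from $g(\lambda_+^+)$ to $0$ on $(\lambda_+,\infty)$ with $g'<0$ there, each factor $\omega_l^2 g-1$ has at most one zero in $(\lambda_+,\infty)$, which exists — and is simple, and equals $\rho_l$ — iff $\omega_l^2>1/g(\lambda_+^+)$, i.e.\ iff $l\le q$. Thus $F$ is holomorphic near $(\lambda_+,\infty)$, its only zeros there are $\rho_1>\cdots>\rho_q$, and the zero at $\rho_i$ has order exactly $j_i$.

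Now comes Rouch\'e's theorem. Pick $M>(\sqrt{\lambda_++\varepsilon_1}+\omega_1+1)^2$, which bounds $\|\Sigma_n\Sigma_n^*\|$ for $n$ large on $O_n$, and $\delta\in\bigl(0,\tfrac12\min_{i<q}(\rho_i-\rho_{i+1})\bigr)$ small enough that the circles $\gamma_i=\{|\lambda-\rho_i|=\delta\}$, $i\le q$, are pairwise disjoint and contained in $\{\Re\lambda>\lambda_++\varepsilon_1\}$. On each $\gamma_i$, $|F|\ge c_i>0$, so for $n$ large $|\det\widehat H_n(\sqrt\lambda)-F(\lambda)|<|F(\lambda)|$ there, and Rouch\'e gives exactly $j_i$ zeros (with multiplicity) of $\det\widehat H_n(\sqrt\cdot)$ inside $\gamma_i$; by the first paragraph these are real and are eigenvalues of $\Sigma_n\Sigma_n^*$. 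Likewise $|F|$ is bounded below on the compact set $[\lambda_++\varepsilon_1,M]\setminus\bigcup_{i\le q}(\rho_i-\delta,\rho_i+\delta)$, so for $n$ large $\det\widehat H_n(\sqrt\cdot)$ does not vanish there, and $\Sigma_n\Sigma_n^*$ has no eigenvalue there, nor any above $M$. Hence, for $n$ large, $\Sigma_n\Sigma_n^*$ has exactly $j_i$ eigenvalues in $(\rho_i-\delta,\rho_i+\delta)$ for each $i\le q$ and none elsewhere above $\lambda_++\varepsilon_1$; ordering $\rho_1>\cdots>\rho_q$ forces $\hat\lambda_{j_1+\cdots+j_{i-1}+\ell,n}\in(\rho_i-\delta,\rho_i+\delta)$ for $\ell=1,\dots,j_i$, and letting $\delta\downarrow0$ proves the first assertion. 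Writing $m^\star=j_1+\cdots+j_q$, the same count shows $\hat\lambda_{m^\star+1,n}\le\lambda_++\varepsilon_1$ for $n$ large, so $\limsup_n\hat\lambda_{m^\star+1,n}\le\lambda_+$ (as $\varepsilon_1\downarrow0$). For the lower bound, Weyl's inequalities applied to $\Sigma_n\Sigma_n^*=X_nX_n^*+\bigl(X_nP_n^*+P_nX_n^*+P_nP_n^*\bigr)$, whose perturbation term has rank $\le 2r$, give $\hat\lambda_{m^\star+1,n}\ge\lambda_{m^\star+1+2r}(X_nX_n^*)$; and $\lambda_k(X_nX_n^*)\to\lambda_+$ for each fixed $k$ (by Assumption~\ref{ass:eigen-max} together with Assumption~\ref{ass:resolvent}, which forces a number of order $n$ of eigenvalues of $X_nX_n^*$ into any left neighbourhood of $\lambda_+$, the latter being in the support of $\pi$). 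Therefore $\hat\lambda_{m^\star+1,n}\to\lambda_+$.

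The delicate points are, I expect, twofold. First, in the contour argument one must make sure the convergence supplied by Lemma~\ref{lm-uniform-cvg} is strong enough on the curves $\gamma_i$ and on the exceptional compact set, and that every zero of $\det\widehat H_n(\sqrt\cdot)$ near the real interval is real and records an eigenvalue multiplicity correctly — this is the content of the first paragraph and requires care with the event $O_n$ and with the cancellation of the $\sqrt\lambda$ prefactors. Second, within the analysis of $F$ one needs $g'<0$ \emph{strictly} on $(\lambda_+,\infty)$, so that each outlier $\rho_i$ is a simple zero of $\omega_i^2 g-1$ and hence contributes exactly $j_i$ — and not a larger multiple of $j_i$ — to the eigenvalues clustering at $\rho_i$; this is what pins the local count to $j_i$. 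The remaining ingredients (the block-matrix identities, the Schur complement, the Weyl bound) are routine.
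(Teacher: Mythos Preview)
The paper does not supply its own proof of this theorem: it presents the heuristic derivation \eqref{eq-structure-Sigma}--\eqref{eq-resolvent} and then cites Benaych-Georges and Nadakuditi \cite{ben-rao-09,ben-rao-11} for the rigorous statement. Your proposal turns that heuristic into a proof via Rouch\'e's theorem, and the argument is correct; this is essentially the route taken in the cited references as well.

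Two minor technical remarks. First, your claim that $\lambda\mapsto\det\widehat H_n(\sqrt\lambda)$ is well defined and holomorphic is most cleanly seen through the Schur complement: writing $\widehat H_n(x)=\begin{bmatrix} xA & B\\ C & xD\end{bmatrix}$ with $A=U^*Q(x^2)U$, $D=\Omega V^*\widetilde Q(x^2)V\Omega$ and $B,C$ the off-diagonal blocks (all functions of $x^2=\lambda$), one gets $\det\widehat H_n(x)=\det A\cdot\det(\lambda D-CA^{-1}B)$ whenever $A$ is invertible, which it is in a complex neighbourhood of $(\lambda_+,\infty)$ since $A\to m(\lambda)I_r$ uniformly there and $m(\lambda)<0$. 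This both exhibits $\det\widehat H_n(\sqrt\lambda)$ as a holomorphic function of $\lambda$ and gives its uniform convergence to $F(\lambda)$ directly. Second, Lemma~\ref{lm-uniform-cvg} is stated for closed paths, but its normal-family proof delivers uniform convergence on any compact subset of $\{\Re\lambda>\lambda_+\}$; this is what you use on the real set $[\lambda_++\varepsilon_1,M]\setminus\bigcup_{i\le q}(\rho_i-\delta,\rho_i+\delta)$, and it is legitimate. The Weyl-inequality lower bound for $\hat\lambda_{m^\star+1,n}$ is also fine: the perturbation $X_nP_n^*+P_nX_n^*+P_nP_n^*$ indeed has rank at most $2r$ (the last two summands have column space in $\mathrm{col}(U_n)$ and the first has rank $\le r$), and the fact that $\lambda_k(X_nX_n^*)\to\lambda_+$ for each fixed $k$ follows from \ref{ass:resolvent}--\ref{ass:eigen-max} exactly as you indicate.
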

In the case where $\sqrt{n} X$ is a standard Gaussian matrix, $\pi$ is the 
Mar\v cenko-Pastur distribution with support $\support(\pi) = 
[\lambda_-, \lambda_+] = [(1-\sqrt{c})^2, (1+\sqrt{c})^2 ]$, and 
\begin{equation} 
\label{st-mp} 
m(x) = \frac{1}{2cx} \left( 1-c-x + 
\sqrt{(1-c-x)^2 - 4cx} \right) 
\end{equation} 
for $x \in (\lambda_+, \infty)$. After a few derivations, we obtain: 
\begin{corollary}
\label{mp}
Assume $\sqrt{n} X$ is standard Gaussian. Let $q$ be the maximum index such
that $\omega_q^2 > \sqrt{c}$. Then 
\[
\hat\lambda_{j_1+\cdots+j_{i-1}+\ell,n} \xrightarrow[n\to\infty]{\text{a.s.}} 
\frac{(\omega_i^2 +1)(\omega_i^2 + c)}{\omega_i^2} \quad 
\text{for} \ i=1,\ldots,q\ , 
\]
and 
$\hat\lambda_{j_1+\cdots+j_{q}+1,n} 
\xrightarrow[n\to\infty]{\text{a.s.}} (1+\sqrt{c})^2$. 
\end{corollary}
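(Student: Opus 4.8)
The statement is a direct specialization of Theorem \ref{cvg-spk}: it suffices to compute $g(\lambda_+^+)$ and to solve $\omega_i^2 g(\rho_i) = 1$ explicitly when $\pi$ is the Mar\v cenko--Pastur law. The plan is to exploit the quadratic equation satisfied by the Stieltjes transform \eqref{st-mp}. Squaring the defining relation $2cx\,m(x) - (1-c-x) = \sqrt{(1-c-x)^2 - 4cx}$ gives
\[
c x\, m(x)^2 + (x + c - 1)\, m(x) + 1 = 0 , \qquad x \in (\lambda_+, \infty) ,
\]
with $m(x) < 0$ on this interval. Since $\tilde m(x) = c\, m(x) - (1-c)/x$, the function of \eqref{eq:def-g} is $g(x) = x\, m(x)\, \tilde m(x) = c x\, m(x)^2 - (1-c)\, m(x)$, and substituting $c x\, m(x)^2 = (1-c-x)\, m(x) - 1$ from the quadratic collapses this to the compact identity $g(x) = -\,x\, m(x) - 1$.

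With this identity in hand the threshold is immediate. Evaluating \eqref{st-mp} at $x = \lambda_+ = (1+\sqrt c)^2$ annihilates the square root and a one-line computation gives $m(\lambda_+^+) = -\bigl(\sqrt c\,(1+\sqrt c)\bigr)^{-1}$, whence $g(\lambda_+^+) = -\lambda_+\, m(\lambda_+^+) - 1 = 1/\sqrt c$. Therefore the condition $\omega_i^2 > 1/g(\lambda_+^+)$ of Theorem \ref{cvg-spk} reads $\omega_i^2 > \sqrt c$, which pins down $q$ as claimed. To solve $\omega_i^2 g(\rho_i) = 1$, set $t_i = \rho_i\, m(\rho_i)$; the identity for $g$ gives $t_i = -(1+\omega_i^2)/\omega_i^2$, and multiplying the quadratic for $m$ evaluated at $\rho_i$ by $\rho_i$ yields $c\, t_i^2 + (\rho_i + c - 1)\, t_i + \rho_i = 0$, which is affine in $\rho_i$ and solves to $\rho_i = t_i (1 - c - c t_i)/(t_i + 1)$. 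Plugging in $t_i + 1 = -1/\omega_i^2$ and $1 - c - c t_i = (\omega_i^2 + c)/\omega_i^2$ produces
\[
\rho_i = \frac{(\omega_i^2 + 1)(\omega_i^2 + c)}{\omega_i^2} .
\]

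It then remains to check that this value is the solution larger than $\lambda_+$ that Theorem \ref{cvg-spk} guarantees. Writing $\rho_i = \omega_i^2 + 1 + c + c/\omega_i^2$, its derivative in $\omega_i^2$ is $1 - c/\omega_i^4$, which is positive exactly for $\omega_i^2 > \sqrt c$, and its value at $\omega_i^2 = \sqrt c$ equals $(1+\sqrt c)^2 = \lambda_+$; hence $\rho_i > \lambda_+$ for every $i \le q$, as needed. The second convergence $\hat\lambda_{j_1+\cdots+j_q+1,n} \to (1+\sqrt c)^2$ is merely the second assertion of Theorem \ref{cvg-spk} combined with $\lambda_+ = (1+\sqrt c)^2$. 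The only points requiring a little care are the branch and sign bookkeeping for $m$ on $(\lambda_+,\infty)$ and the last monotonicity check; beyond that the argument is routine algebra, so I do not anticipate a genuine obstacle.
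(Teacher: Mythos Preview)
Your proof is correct and follows exactly the route the paper indicates: it specializes Theorem~\ref{cvg-spk} to the Mar\v cenko--Pastur case by plugging in \eqref{st-mp}, and your derivation simply makes explicit the ``few derivations'' the paper omits. The use of the quadratic $cx\,m(x)^2 + (x+c-1)m(x) + 1 = 0$ to collapse $g(x)$ to $-x\,m(x)-1$ is a clean way to organize the algebra, and the monotonicity check confirming $\rho_i > \lambda_+$ is a nice touch the paper leaves implicit.
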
 

We now turn our attention to the eigenspaces of the isolated eigenvalues. 

\subsection*{Asymptotic behavior of certain bilinear forms.} 
Recall the definition of $s$ as provided in Assumption \ref{ass:P-rough}.  
Given $i \leq s$, assume that $\omega_i^2 > 1/g(\lambda_+^+)$. 
Given two $N \times 1$ deterministic 
sequences of vectors $b_{1,n}$ and $b_{2,n}$ with bounded norms, we shall 
find here a simple asymptotic relation between 
$b_{1,n}^* \widehat\Pi_{i,n} b_{2,n}$ and $b_{1,n}^* \Pi_{i,n} b_{2,n}$, 
that will be at the basis of the Spike MUSIC algorithm. 
A close problem has been considered in \cite{ben-rao-11}. We consider here
a different technique, based on a contour integration and on the use of 
Lemmas \ref{lm-fq-haar} and \ref{lm-fq-two-haars}. This method lends itself
easily to the first and second order analyses of the Spike MUSIC algorithm
that we shall develop in the following sections. 

Writing 
${\bf b}_{i} = \begin{bmatrix} b_{i} \\ 0 \end{bmatrix}$ with $i=1,2$, we have
by virtue of Lemma \ref{lm-gram-hermitian}: 
\[
b_1^* \widehat\Pi_{i} b_2 
= 
\frac{-1}{\imath \pi} \oint_{{\mathcal C}_{i,n}}
{\bf b}_1^* \left( {\bs \Sigma} - z I \right)^{-1} {\bf b}_2 \ dz \ ,
\]
where ${\mathcal C}_{i,n}$ is a positively oriented circle that encloses the 
only singular values $\sqrt{\hat\lambda_{k,n}}$ of $\Sigma_n$ for which 
$i(k) = i$. 
Recalling \eqref{eq-structure-Sigma} and using Woodbury's identity 
(\cite[\S 0.7.4]{HorJoh94}) together with the fact that $J = J^{-1}$, we 
obtain:
\begin{multline}
b_1^* \widehat\Pi_{i} b_2 = 
\frac{-1}{\imath \pi} \oint_{{\mathcal C}_i} {\bf b}_1^* 
{\bf Q}(z) {\bf b}_2 \ dz 
\\
+ 
\frac{1}{\imath \pi} \oint_{{\mathcal C}_i} 
{\bf b}_1^* {\bf Q}(z)B\left( J + B^* {\bf Q}(z) B \right)^{-1}
B^* {\bf Q}(z) {\bf b}_2 \ dz \ .
\nonumber 
\end{multline} 
Using \eqref{eq-resolvent}, we obtain after a straightforward calculation: 
\begin{equation} 
b_{1,n}^* \widehat\Pi_{i,n} b_{2,n} = 
\frac{-1}{\imath \pi} \oint_{{\mathcal C}_{i,n}} {\bf b}_{1,n}^* 
{\bf Q}_n(z) {\bf b}_{2,n} \ dz 
+ 
\frac{1}{\imath \pi} \oint_{{\mathcal C}_{i,n}} 
\hat a_{1,n}^*(z) \widehat H_n(z)^{-1} \hat a_{2,n}(z) \, dz  
\label{eq-expansion-aH}
\end{equation}
where\footnote{Notice that $\hat a_{\ell,n}^*(z)$ as defined is {\bf
    not} the Hermitian adjoint of $\hat a_{\ell,n}(z)$. Despite this
  ambiguity, we introduce this notation which remains natural and
  widespread in Signal Processing. \label{fn:not-hermition}}
\begin{eqnarray}
\hat a_{\ell,n}(z) &=& 
\begin{bmatrix}
z U_n^* Q_n(z^2) \\ \Omega_n V_n^* \widetilde Q_n(z^2) X_n^* \end{bmatrix} 
b_{\ell,n} \ ,\nonumber \\
\hat a_{\ell,n}^*(z) &=& 
b_{\ell,n}^* 
\begin{bmatrix} z Q_n(z^2) U_n & X_n \widetilde Q_n(z^2) V_n \Omega_n 
\end{bmatrix}\ . \label{eq:def-a-hat-star}
\end{eqnarray}
Intuitively, the first integral is zero for $n$ large enough and the second
is close to 
\[
T_{i,n} = 
\frac{1}{\imath \pi} \oint_{\gamma_i} 
a_{1,n}^*(z) H(z) ^{-1} a_{2,n}(z) \, dz \ ,
\]  
where $\gamma_i$ is a small enough positively oriented circle which does not 
meet the image of $\support(\pi)$ by $x\mapsto \sqrt{x}$ nor any of the 
$\sqrt{\rho_\ell}$ and such that only $\sqrt{\rho_i} \in \interior(\gamma_i)$,  
the interior of the disk defined by $\gamma_i$ (see Figure \ref{fig:contour}), 
$a_{\ell,n}^*(z)=b_{\ell,n}^* \begin{bmatrix} z m(z^2) U_n & 0 \end{bmatrix}$,
and 
\[
a_{\ell,n}(z) = \begin{bmatrix} z m(z^2) U_n^* \\ 0 \end{bmatrix} b_{\ell,n} 
. 
\]

\begin{figure}[t]
  \begin{center}
    \includegraphics[width=0.7\linewidth]{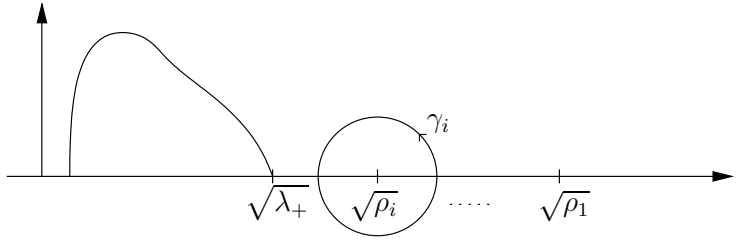}
  \end{center}
  \caption{The contour $\gamma_i$ w.r.t. the support of the limit
singular value distribution of $X_n$ and the other $\sqrt{\rho_\ell}$.}
  \label{fig:contour}
\end{figure}

The approximation $b_1^* \widehat\Pi_i b_2 \simeq T_i$ will be justified 
rigorously below. 
For the moment, let us develop the expression of $T_i$. Defining the
$r\times r$ matrices:
\[
{\mathcal I}_{i} = \begin{bmatrix} 0 & & \\ & I_{j_i} & \\ & & 0  
\end{bmatrix}\ ,
\]
where the integers $j_i$ are defined in Assumption \ref{ass:P-rough}, we have 
\begin{equation}
\label{inv(H)} 
H(z)^{-1} = \sum_{i=1}^s 
\frac{1}
{z^2 m(z^2) \tilde m(z^2) \omega_i^2 -1} 
\begin{bmatrix} z \tilde m(z^2) \omega_i^2 & - 1 \\ 
- 1 & z m(z^2) \end{bmatrix} 
\otimes {\mathcal I}_i \ ,
\end{equation} 
which leads to 
\begin{align} 
T_i 
&= 
\frac{1}{\imath \pi} 
\sum_{\ell=1}^s b_1^* \Pi_\ell b_2 \oint_{\gamma_i} 
\frac{z^3 m(z^2)^2 \tilde m(z^2) \omega_\ell^2}
{z^2 m(z^2) \tilde m(z^2)\omega_\ell^2 - 1} 
\, dz \nonumber \\
&= 
\frac{1}{2 \imath \pi} \sum_{\ell=1}^s   b_1^* \Pi_\ell b_2 
\oint_{\gamma_i'}
\frac{w m(w)^2 \tilde m(w) \omega_\ell^2}
{w m(w) \tilde m(w)\omega_\ell^2 - 1} 
\, dw \nonumber 
\end{align} 
by making the change of variable $w = z^2$. 
Observe that the path $\gamma_i'$ now encloses $\rho_i$ only. 
Recall that $w m(w) \tilde m(w) \omega_\ell^2 - 1 = 0$ if and only if 
$w = \rho_\ell$ for every $\ell$ such that $\omega_\ell^2 > 1/g(\lambda_+^+)$, 
and since $g(w) = 
w m(w) \tilde m(w)$ is decreasing on $(\lambda_+, \infty)$, these zeros are 
simple. As a result, the integrals above are 
equal to zero for $\ell \neq i$, and the integrand has a simple pole at 
$w = \rho_i$ for $\ell= i$. By the Residue Theorem, we have:
\begin{equation} 
T_i = 
\frac{1}{\imath \pi} \oint_{\gamma_i} 
a_{1}^*(z) H(z) ^{-1} a_{2}(z) \, dz 
= 
\frac{\rho_i m(\rho_i)^2 \tilde m(\rho_i)}
{(\rho_i m(\rho_i) \tilde m(\rho_i))'} 
b_1^* \Pi_i b_2 
\label{eq-T_k} 
\end{equation} 
where the denominator at the right hand side is the derivative of 
the function $\lambda \mapsto \lambda m(\lambda) \tilde m(\lambda)$ 
at $\lambda = \rho_i$. 
We now make this argument more rigorous:

\begin{theorem}
\label{bilin-forms}
Let Assumptions \ref{ass:asymptotics}-\ref{ass:P-rough} hold true. For
a given $i\leq s$, assume that $\omega_i^2 > 1/g(\lambda_+^+)$. Let
$(b_{1,n})$ and $(b_{2,n})$ be two sequences of deterministic vectors
with bounded norms. Then
\[
b_{1,n}^* \widehat\Pi_{i,n} b_{2,n} - 
\frac{\rho_i m(\rho_i)^2 \tilde m(\rho_i)}
{(\rho_i m(\rho_i) \tilde m(\rho_i))'} 
b_{1,n}^* \Pi_{i,n} b_{2,n}  
\xrightarrow[n\to\infty]{\text{a.s.}} 0 \ .
\]
\end{theorem}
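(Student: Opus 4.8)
The plan is to make rigorous the informal derivation that precedes the statement, namely the chain of approximations
\[
b_{1,n}^* \widehat\Pi_{i,n} b_{2,n} \;\simeq\; \frac{1}{\imath\pi}\oint_{\mathcal{C}_{i,n}} \hat a_{1,n}^*(z)\,\widehat H_n(z)^{-1}\,\hat a_{2,n}(z)\,dz \;\simeq\; T_{i,n} \;\xrightarrow[n\to\infty]{\text{a.s.}}\; \frac{\rho_i m(\rho_i)^2 \tilde m(\rho_i)}{(\rho_i m(\rho_i)\tilde m(\rho_i))'}\,b_{1,n}^*\Pi_{i,n} b_{2,n}.
\]
First I would fix $\varepsilon_1>0$ small and work on the almost-sure event $O_n$ of \eqref{eq:def-O}; since $\1_{O_n}\toas 1$ it suffices to prove the convergence with the extra indicator. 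By Theorem \ref{cvg-spk} the singular values $\sqrt{\hat\lambda_{k,n}}$ with $i(k)=i$ converge almost surely to $\sqrt{\rho_i}$ and all other eigenvalues of $\Sigma_n\Sigma_n^*$ stay, for $n$ large, away from $\rho_i$; hence for $n$ large the fixed contour $\gamma_i$ (a small circle around $\sqrt{\rho_i}$ separating it from the image of $\support(\pi)$ and from the $\sqrt{\rho_\ell}$, $\ell\neq i$) may be used in place of $\mathcal{C}_{i,n}$ in \eqref{eq-expansion-aH}. On $\gamma_i$ we have $\Re(z^2) > \lambda_+ + \varepsilon_1$ after possibly shrinking things, so all the resolvent estimates apply on the whole contour.

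The next step is to control the first integral $-(\imath\pi)^{-1}\oint_{\gamma_i}{\bf b}_{1,n}^*{\bf Q}_n(z){\bf b}_{2,n}\,dz$ in \eqref{eq-expansion-aH}. For $n$ large this contour encloses no singular value of $\mathbf{X}$ (they accumulate on the image of $\support(\pi)$, which is outside $\gamma_i$), so $z\mapsto {\bf Q}_n(z)$ is holomorphic inside $\gamma_i$ and the integral is exactly zero. Then I would show $\widehat H_n(z)^{-1}\to H(z)^{-1}$ uniformly on $\gamma_i$ almost surely: by Lemma \ref{lm-uniform-cvg} applied with the isometries $U_n,V_n$ and the closed path $z\mapsto z^2$ image of $\gamma_i$ (which satisfies $\min\Re > \lambda_+$), each block of $\widehat H_n(z)$ converges uniformly a.s.\ to the corresponding block of $H(z)$; since $\det H(z)=\det(z^2 m(z^2)\tilde m(z^2)O^2 - I_r)$ is bounded away from zero on $\gamma_i$ (its only zeros on the relevant region are at $z^2=\rho_\ell$, none of which lies on $\gamma_i$), the inverse converges uniformly too. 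Similarly $\hat a_{\ell,n}(z)\to a_{\ell,n}(z)$ and $\hat a_{\ell,n}^*(z)\to a_{\ell,n}^*(z)$ uniformly on $\gamma_i$ a.s.: the term $z U_n^* Q_n(z^2) b_{\ell,n} \to z m(z^2) U_n^* b_{\ell,n}$ by the first assertion of Lemma \ref{lm-uniform-cvg} (writing $U_n^* Q_n U_n$ and extending to $U_n^* Q_n b_{\ell,n}$ via boundedness of $b_{\ell,n}$ and a second isometry completion), while the block $\Omega_n V_n^* \widetilde Q_n(z^2) X_n^* b_{\ell,n}$ tends to $0$ by the third assertion of Lemma \ref{lm-uniform-cvg}. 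Combining these uniform convergences with $\sup_{\gamma_i}\|H(z)^{-1}\|<\infty$, the second integral in \eqref{eq-expansion-aH} converges a.s.\ to $T_{i,n}$ uniformly in the bounded quantities $b_{1,n}^*\Pi_{\ell,n}b_{2,n}$; more precisely, the difference $\oint_{\gamma_i}(\hat a_{1,n}^* \widehat H_n^{-1}\hat a_{2,n} - a_{1,n}^* H^{-1} a_{2,n})\,dz \toas 0$.

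Finally, $T_{i,n}$ is evaluated exactly by residues: substituting the partial-fraction form \eqref{inv(H)} of $H(z)^{-1}$ into $a_{1,n}^*(z)H(z)^{-1}a_{2,n}(z)$, only the $\ell=i$ term contributes a pole inside $\gamma_i$ (after the change of variable $w=z^2$, since $wm(w)\tilde m(w)\omega_\ell^2-1$ vanishes on the relevant region only at $w=\rho_\ell$, and $\rho_\ell\notin\interior(\gamma_i')$ for $\ell\neq i$), and that pole is simple because $g(w)=wm(w)\tilde m(w)$ is strictly decreasing on $(\lambda_+,\infty)$. The Residue Theorem then gives \eqref{eq-T_k}, i.e.\ $T_{i,n} = \dfrac{\rho_i m(\rho_i)^2\tilde m(\rho_i)}{(\rho_i m(\rho_i)\tilde m(\rho_i))'}\,b_{1,n}^*\Pi_{i,n}b_{2,n}$. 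Chaining the three displays yields the claim. The main obstacle I anticipate is the uniform-on-$\gamma_i$ control: I must simultaneously (i) legitimately replace the random contour $\mathcal{C}_{i,n}$ by the fixed $\gamma_i$ for all large $n$ on a probability-one event — this needs Theorem \ref{cvg-spk} together with the fact that no singular value of $\Sigma_n$ other than those with $i(k)=i$ approaches $\sqrt{\rho_i}$ — and (ii) upgrade the pointwise-in-$z$ a.s.\ convergences coming from Lemmas \ref{lm-fq-haar}–\ref{lm-fq-two-haars} to convergence uniform on the contour, which is exactly what Lemma \ref{lm-uniform-cvg} (normal-families/Vitali argument) is designed to provide; care is needed because the vectors $b_{\ell,n}$ are only bounded, not fixed, so one works with the Gram-type quantities and uses a fixed isometry completion plus Cauchy–Schwarz.
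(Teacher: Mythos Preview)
Your proposal is correct and follows essentially the same route as the paper: replace the random contour $\mathcal{C}_{i,n}$ by the fixed $\gamma_i$ using Theorem~\ref{cvg-spk} and Assumption~\ref{ass:eigen-max}, observe that the first integral in \eqref{eq-expansion-aH} vanishes since $\gamma_i$ encloses no singular value of $X_n$, then invoke Lemma~\ref{lm-uniform-cvg} for the uniform-on-$\gamma_i$ convergences $\widehat H_n\to H$ and $\hat a_{\ell,n}\to a_{\ell,n}$, and finally read off $T_{i,n}$ from the residue computation \eqref{eq-T_k} already carried out before the theorem. Your caution about $b_{\ell,n}$ being merely bounded (not part of a fixed isometry) is well placed; the paper absorbs this into a direct appeal to Lemma~\ref{lm-uniform-cvg}, which is justified since the underlying moment bounds (Lemmas~\ref{lm-fq-haar}--\ref{lm-fq-two-haars}) hold for arbitrary unit vectors and the normal-families argument goes through after normalizing.
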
 
\begin{proof}
Write 
\[
\widehat{T}_i = 
\frac{1}{\imath \pi}
\oint_{\gamma_i}  
\hat a_1^*(z) \widehat H(z)^{-1} \hat a_2(z) \, dz \ .
\] 
Then, with probability one, $b_1^* \widehat\Pi_i b_2 = \widehat{T}_i$
for $n$ large enough. Indeed, on the set $O_n$ (as defined in
\eqref{eq:def-O}), the singular values of $\Sigma$ greater than
$\sqrt{\lambda_+}+ \varepsilon_1$ coincide with the poles of $\widehat
H(z)$ which are greater than $\sqrt{\lambda_+} +\varepsilon_1 $ by the
argument preceding Theorem \ref{cvg-spk}. On this set, the first
integral on the right hand side (r.h.s.) of \eqref{eq-expansion-aH} is zero,
and by Theorem \ref{cvg-spk}, the second integral can be replaced with
$\int_{\gamma_i}$ with probability one for $n$ large enough.  By Lemma
\ref{lm-uniform-cvg}, the differences $\widehat H(z) - H(z)$, $\hat
a_1(z) - a_1(z) $, and $\hat a_2(z) - a_2(z)$ a.s.~converge to zero,
uniformly on $\gamma_i$. Hence $\widehat T_i - T_i \toas 0$.
\end{proof}

\section{The Spike MUSIC Estimation Algorithm} 
\label{spk-music} 

\subsection*{Algorithm description} 
We now consider the application context described in the introduction, and
assume that $P_n = B_n(\varphi_1,\ldots,\varphi_r) S_n^*$ where 
$B_n(\varphi_1,\ldots,\varphi_r) = \begin{bmatrix} 
b_n(\varphi_1) \cdots b_n(\varphi_k) \end{bmatrix}$, and 
$b_n(\varphi) = N^{-1/2} \begin{bmatrix} \exp( - \imath D \ell \varphi ) 
\end{bmatrix}_{\ell=0}^{N-1}$ with domain $\varphi \in [0, \pi/D]$.
When the $\varphi_k$ are different, one can check that $B_n^* B_n \to
I_r$ as $n\to\infty$.  In most practical cases of interest, $S_n^* S_n
\to O^2$ where $O$ is given by Equation \eqref{eq-limit-O}. In these
conditions, due to $B_n^* B_n \to I_r$, the diagonal elements of $O$
are the limits of the singular values of $P_n$ and Assumption
\ref{ass:P-rough} holds true. 

In the area of signal processing, the positive real numbers $\omega_i^2$ are 
called the Signal to Noise Ratios (SNR) associated with the $r$ sources. 
Assumption \ref{ass:P-rough} becomes: 

\begin{assumption}\label{ass:P-finer}
Matrices $P_n$ of dimension $N\times n$ are deterministic and are written:
$$
P_n = B_n(\varphi_1, \cdots \varphi_r) S_n^*
$$
 where $r$ is a fixed integer, $B_n(\varphi_1, \cdots \varphi_r) = 
\begin{bmatrix} b_n(\varphi_1) &\cdots & b_n(\varphi_r) \end{bmatrix}$ is a $N\times r$ matrix, 
$b_n(\varphi) = N^{-1/2} \begin{bmatrix} \exp( - \imath D \ell \varphi ) 
\end{bmatrix}_{\ell=0}^{N-1}$ on $\varphi \in [0, \pi/D]$, and the $\varphi_k$
are all different. Matrix $S_n$ of dimensions $n\times r$ satisfies:
$$
\sqrt{n} ( S_n^* S_n - O^2 ) = {\mathcal O}(1) 
$$ 
as $n\to\infty$, where $O$ is defined in Assumption \ref{ass:P-rough}, 
and ${\mathcal O}$ is the classical Landau notation. 
\end{assumption}
The assumption over the speed of convergence of $S^* S$ will be needed only 
for the purpose of the second order analysis. It is satisfied by most 
practical systems met in the field of signal processing. We moreover observe 
that it is possible to relax the assumption that $O$ is diagonal at the 
expense of a more complicated second order analysis. 

In order for the algorithm to be able to estimate the $r$ angles, it is 
necessary that the perturbation $P$ gives rise to $r$ isolated eigenvalues, 
a fact that is stated in the following assumption:
\begin{assumption}\label{ass:isolated}
  Recall the definition \eqref{eq:def-g} of function $g$, let
  $\lambda_+$ as defined in \ref{ass:resolvent} and let
  $g(\lambda_+^+)=\lim_{x\downarrow \lambda_+} g(x)$. Let the
  $\omega_i$'s as defined in \ref{ass:P-rough}, then: 
$$
\omega_r^2 > \frac{1} {g(\lambda_+^+)} \ .
$$
\end{assumption}

The Spike MUSIC algorithm goes like this. The localization function 
$\chi(\varphi)$ defined in the introduction is also written as 
$\chi(\varphi) = \sum_{i=1}^s b(\varphi)^* \Pi_i b(\varphi)$. Given $\varphi$,
the results of the previous section (Theorems \ref{cvg-spk} and 
\ref{bilin-forms} with $b_1 = b_2 = b(\varphi)$) show us that: 
\begin{equation}\label{eq:def-chi-hat}
\hat\chi_n(\varphi) = 
\sum_{k=1}^r 
| b_n(\varphi)^* \hat u_{k,n} |^2 \zeta(\hat\lambda_{k,n})  \ ,
\end{equation}
where 
\begin{equation}\label{eq:def-zeta}
\zeta(\lambda) = 
\frac{(\lambda m(\lambda) \tilde m(\lambda))'} 
{\lambda m(\lambda)^2 \tilde m(\lambda)} 
\end{equation}
is a consistent estimator of $\chi_n(\varphi)$ in the asymptotic regime
described by \ref{ass:asymptotics}. By searching for the maxima of $\hat\chi(\varphi)$,
we infer that we obtain consistent estimates of the angles or arrival. 
Observe that this algorithm requires the knowledge of the 
Stieltjes Transform of the limit spectral measure of $XX^*$ (available if
the statistical description of the noise is known) and the number 
$r$ of emitting sources. Notice that when this number is unknown, it can
be estimated along the ideas described in \emph{e.g.} 
\cite{BianNajMai'11, nad-jmva11}. \\ 
We now perform the first order analysis of this algorithm. 

\subsection*{First order analysis of the Spike MUSIC algorithm}
We now formalize the argument of the previous paragraph and we push it further
to show the consistency ``up to the order $n$'' of the Spike MUSIC estimator.
We shall need this speed to perform the second order analysis 
(Lemma \ref{denom} below). 
\begin{theorem} 
\label{n-consist} 
Let Assumptions \ref{ass:asymptotics}-\ref{ass:P-finer} hold true. 
Then for all $k=1,\cdots,r$, there
exists a local maximum $\hat\varphi_{k,n}$ of $\hat\chi_n(\varphi)$ such that 
\[
n(\hat\varphi_{k,n} - \varphi_k) \xrightarrow[n\to\infty]{\text{a.s.}} 0 . 
\]
\end{theorem}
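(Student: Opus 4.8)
The plan is to leverage Theorem~\ref{bilin-forms} to establish a uniform (in $\varphi$) approximation of $\hat\chi_n(\varphi)$ by the deterministic localization function $\chi_n(\varphi) = \sum_{i=1}^s b_n(\varphi)^*\Pi_{i,n}b_n(\varphi)$, and then transfer this to the location of the maxima. First I would show that the limit function $\chi(\varphi)=b(\varphi)^*\Pi b(\varphi)$ (where $\Pi$ is the orthogonal projector on $\im B_n$, whose limit is well-defined since $B_n^*B_n\to I_r$) attains its maximal value $1$ exactly at the $r$ angles $\varphi_1,\dots,\varphi_r$, as recalled in the introduction, and moreover that each of these maxima is nondegenerate, i.e.\ the second derivative $\chi''(\varphi_k)<0$, so that locally $\chi$ behaves quadratically near each $\varphi_k$. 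This requires checking that $b(\varphi)$ and its derivative $b'(\varphi)$ span a $2$-dimensional space transverse to the relevant part of $\im B$ — a deterministic computation using the Vandermonde-type structure of $b_n(\varphi)=N^{-1/2}[\exp(-\imath D\ell\varphi)]_{\ell=0}^{N-1}$.

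Next I would upgrade the pointwise a.s.\ convergence in Theorem~\ref{bilin-forms} to uniform convergence on the compact interval $[0,\pi/D]$. Since $\hat\chi_n(\varphi)$ and $\chi_n(\varphi)$ are both given by finitely many terms of the form $b_n(\varphi)^*M b_n(\varphi)$ with $M$ a fixed-rank matrix, and since $\varphi\mapsto b_n(\varphi)$ is Lipschitz with a constant independent of $n$ (its entries have bounded derivatives), the families $\{\hat\chi_n\}$ and $\{\chi_n\}$ are uniformly equicontinuous; combining equicontinuity with a.s.\ convergence on a countable dense set (via Theorem~\ref{bilin-forms} applied to each pair $(b_n(\varphi),b_n(\varphi))$ with $\varphi$ rational, plus Borel--Cantelli exactly as in Lemma~\ref{lm-uniform-cvg}) yields $\sup_\varphi|\hat\chi_n(\varphi)-\chi_n(\varphi)|\toas 0$, hence $\sup_\varphi|\hat\chi_n(\varphi)-\chi(\varphi)|\toas 0$. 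Standard perturbation arguments for maximizers of functions converging uniformly to a function with nondegenerate maxima then give, a.s.\ for $n$ large, the existence of a local maximizer $\hat\varphi_{k,n}$ of $\hat\chi_n$ with $\hat\varphi_{k,n}\to\varphi_k$.

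To obtain the sharper rate $n(\hat\varphi_{k,n}-\varphi_k)\toas 0$ rather than mere consistency, I would work with the derivative. At a local maximum, $\hat\chi_n'(\hat\varphi_{k,n})=0$; a Taylor expansion around $\varphi_k$ gives $0 = \hat\chi_n'(\varphi_k) + \hat\chi_n''(\tilde\varphi_{k,n})(\hat\varphi_{k,n}-\varphi_k)$ for some $\tilde\varphi_{k,n}$ between the two. Since $\hat\chi_n''(\tilde\varphi_{k,n})\to\chi''(\varphi_k)\ne 0$ a.s., it suffices to show $n\,\hat\chi_n'(\varphi_k)\toas 0$. Here one uses that $\chi_n'(\varphi_k)=\mathcal{O}(1/n)$ — this is where Assumption~\ref{ass:P-finer}, i.e.\ $\sqrt{n}(S_n^*S_n-O^2)=\mathcal{O}(1)$, and the rate $B_n^*B_n\to I_r$ enter, controlling how fast $\Pi_{i,n}$ approaches its limit — together with a quantified version of Theorem~\ref{bilin-forms} showing $n(\hat\chi_n'(\varphi_k)-\chi_n'(\varphi_k))\toas 0$. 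The latter is the analogue of the bilinear-form convergence applied to the pair $(b_n(\varphi_k),b_n'(\varphi_k))$ with an extra factor $n$ absorbed, which in turn rests on the $N^{-p/2}$ decay rates of Lemmas~\ref{lm-fq-haar} and~\ref{lm-fq-two-haars} with $p$ taken large enough (so that $N^{-p/2}n$ is summable and Borel--Cantelli applies), exactly the mechanism used in Lemma~\ref{lm-uniform-cvg} but with a quantitative accounting of the contour integral in \eqref{eq-expansion-aH} minus its deterministic limit.

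The main obstacle I expect is the last step: proving $n(\hat\chi_n'(\varphi_k)-\chi_n'(\varphi_k))\toas 0$ with the correct power of $n$. This demands going back inside the proof of Theorem~\ref{bilin-forms}, estimating $\widehat H_n(z)^{-1}-H(z)^{-1}$, $\hat a_{\ell,n}(z)-a_{\ell,n}(z)$ and the residual first integral in \eqref{eq-expansion-aH} not just to $o(1)$ but to $o(1/n)$ uniformly on the contour $\gamma_i$, keeping track of the deterministic $\mathcal{O}(1/\sqrt n)$ bias coming from $S_n^*S_n-O^2$ and verifying it does not contaminate $\chi_n'(\varphi_k)$ at order $1/n$ (morally because $b_n'(\varphi_k)$ is orthogonal in the limit to $\im B_n$ so the leading bias term drops out). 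This is the technically delicate bookkeeping the authors flag as needed for Lemma~\ref{denom} downstream.
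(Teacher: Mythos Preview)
Your proposal rests on a scaling error: the map $\varphi\mapsto b_n(\varphi)$ is \emph{not} Lipschitz uniformly in $n$. The $\ell$-th coordinate has derivative $-\imath D\ell N^{-1/2}e^{-\imath D\ell\varphi}$, so $\|b_n'(\varphi)\|^2 = D^2 N^{-1}\sum_{\ell=0}^{N-1}\ell^2 \sim D^2 N^2/3$; thus $\|b_n'\|$ grows like $N$. This already breaks your equicontinuity route to uniform convergence (the paper's Proposition~\ref{prop-uniform-cvg} instead uses a grid of $n^2$ points in $\varphi$ together with Lemma~\ref{lm-fq-haar} at $p=9$ to absorb the $O(n)$ Lipschitz constant). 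More seriously, every derivative of $\chi_n$ scales with a power of $n$: one has $n^{-2}\chi_n''(\varphi_k)\to -c^2D^2/6$ (cf.\ Lemma~\ref{denom}), so your claim ``$\hat\chi_n''(\tilde\varphi_{k,n})\to\chi''(\varphi_k)\ne 0$'' is false, and the reduction ``it suffices to show $n\,\hat\chi_n'(\varphi_k)\toas 0$'' is hopeless --- the second-order analysis in Section~\ref{sec-clt} shows that $n^{-1/2}\hat\chi_n'(\varphi_k)$ converges in law to a nondegenerate Gaussian, so $n\,\hat\chi_n'(\varphi_k)$ is of order $n^{3/2}$, not $o(1)$. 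Even after rescaling correctly, controlling $n^{-2}\hat\chi_n''(\tilde\varphi_{k,n})$ would already require $n(\tilde\varphi_{k,n}-\varphi_k)\to 0$ (the peak of $\chi_n$ has width $O(1/N)$), which is the conclusion you are trying to prove.

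The paper avoids derivatives entirely at this stage. After Proposition~\ref{prop-uniform-cvg} it exploits the Dirichlet-kernel structure of $b_n$: since $\chi_n(\varphi)-\|d_n(\varphi)\|^2\to 0$ with $d_n(\varphi)=[b_n(\varphi_k)^*b_n(\varphi)]_{k=1}^r$ and $b_n(\varphi_k)^*b_n(\varphi)=N^{-1}\sum_{\ell}e^{\imath D\ell(\varphi_k-\varphi)}$, Lemma~\ref{ciblat} (a $\sinc$-type limit) yields that along any subsequence with $N(\hat\varphi_{1,n}-\varphi_1)\to d$ one has $\chi_n(\hat\varphi_{1,n})\to\sinc(d)^2$. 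Uniform convergence and $\hat\chi_n(\hat\varphi_{1,n})\ge\hat\chi_n(\varphi_1)\to 1$ then force $d=0$; the case $N|\hat\varphi_{1,n}-\varphi_1|\to\infty$ is ruled out the same way. No quantitative $o(1/n)$ control of $\widehat H_n-H$ or of $\hat\chi_n'$ is needed here; those estimates appear only in the second-order analysis, where Lemma~\ref{denom} legitimately \emph{uses} Theorem~\ref{n-consist} rather than the other way around.
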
 
The proof of this theorem is performed in two steps. With an approach similar 
to the one used in Section \ref{vap-spikes}, we first prove that 
$\hat\chi(\varphi) - \chi(\varphi) \toas 0$, and the convergence
is uniform on $\varphi\in[0, \pi/D]$ (Proposition ~\ref{prop-uniform-cvg} below). Next, 
following the technique of \cite{han-jap71, han-jap73}, we prove that this 
uniform a.s.~convergence leads to Theorem \ref{n-consist}. 

In the sequel, we write:
\begin{eqnarray}
\label{eq-a-hat-a} 
\hat a(z, \varphi) &=& 
\begin{bmatrix}
z U^* Q(z^2) \\ \Omega V^* \widetilde Q(z^2) X^* \end{bmatrix} b(\varphi)
\quad \text{and} \quad  
a(z, \varphi) \ =\  
\begin{bmatrix}
z m(z^2) U^* \\ 0 \end{bmatrix} b(\varphi)\ , \\
\hat a^*(z, \varphi) &=& b^*(\varphi)
[
z Q (z^2) U \quad   X \widetilde Q(z^2) V \Omega ]\nonumber  \ ,\\
a^*(z, \varphi) &=& 
b(\varphi) [
z m(z^2) U \quad   0] \ .\nonumber
\end{eqnarray} 
Beware that $\hat a ^*$ and $a^*$ are not the Hermitian adjoints of $\hat a$ and $a$ (see the footnote associated to 
Eq. \eqref{eq:def-a-hat-star}). 
\begin{proposition}
\label{prop-uniform-cvg}
In the setting of Theorem \ref{n-consist}, 
$$
\max_{\varphi\in [0, \pi/D]} 
\left| \hat\chi_n(\varphi) - \chi_n(\varphi)\right|
\xrightarrow[n\to\infty]{\text{a.s.}} 0 \ .
$$ 
\end{proposition}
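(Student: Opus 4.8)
The plan is to establish the uniform a.s.\ convergence by upgrading the pointwise-in-$\varphi$ argument already used in Theorem~\ref{bilin-forms} to a uniform statement, exploiting the smoothness of $\varphi\mapsto b_n(\varphi)$ and the contour-integral representation. First I would write, for each $\varphi$, the exact identity $\hat\chi_n(\varphi)=\sum_{i=1}^s b_n(\varphi)^*\widehat\Pi_{i,n}b_n(\varphi)$ (summing \eqref{eq:def-chi-hat} over the blocks, using $\zeta(\hat\lambda_{k,n})$ and the definition of $i(k)$), and note that on the event $O_n$, for $n$ large enough, this equals $\sum_{i=1}^q \widehat T_i(\varphi)$ with $\widehat T_i(\varphi)=\frac{1}{\imath\pi}\oint_{\gamma_i}\hat a^*(z,\varphi)\widehat H_n(z)^{-1}\hat a(z,\varphi)\,dz$, where now $\gamma_i$ is a \emph{fixed} (i.e.\ $\varphi$-independent) contour, because Theorem~\ref{cvg-spk} places all the relevant singular values near the fixed points $\sqrt{\rho_i}$ uniformly in the deterministic data. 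Likewise $\chi_n(\varphi)=\sum_{i=1}^s b_n(\varphi)^*\Pi_{i,n}b_n(\varphi)=\sum_{i=1}^q T_i(\varphi)$ by the residue computation \eqref{eq-T_k}, together with the observation that the blocks $i>q$ do not contribute spikes but still appear in $\chi_n$; one must check that those terms are accounted for — in fact \eqref{eq-T_k} shows $T_i$ reproduces $b_1^*\Pi_i b_2$ exactly for $i\le q$, and the algorithm's consistency at the block level is what makes $\hat\chi_n-\chi_n$ small, so the bookkeeping here needs care.

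Next I would reduce the uniform convergence of $\hat\chi_n-\chi_n$ to the uniform (in both $z\in\gamma_i$ and $\varphi\in[0,\pi/D]$) convergence of the three matrix-valued quantities $\widehat H_n(z)-H(z)$, $\hat a(z,\varphi)-a(z,\varphi)$, and $\hat a^*(z,\varphi)-a^*(z,\varphi)$ to zero. The first is already handled by Lemma~\ref{lm-uniform-cvg} and does not depend on $\varphi$. For the vectors, the key point is that $b_n(\varphi)=N^{-1/2}[\exp(-\imath D\ell\varphi)]_{\ell=0}^{N-1}$ is a smooth, uniformly bounded curve in $\CC^N$; the terms $zU_n^*Q_n(z^2)b_n(\varphi)$, $\Omega_nV_n^*\widetilde Q_n(z^2)X_n^*b_n(\varphi)$, and $zm(z^2)U_n^*b_n(\varphi)$ are all of the form (deterministic bounded vector)$^*$ times a random matrix of the type controlled by Lemmas~\ref{lm-fq-haar} and \ref{lm-fq-two-haars}, once one absorbs $b_n(\varphi)$ into the vector $u$ (and since $U_n,V_n$ have only $r$ columns, working coordinatewise reduces everything to scalar quadratic/bilinear forms $u^*(Q-\alpha I)u$, $u^*Qv$, $u^*X\widetilde Qv$ of the kind the lemmas bound). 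To pass from pointwise to uniform in $\varphi$, I would use a standard Lipschitz/net argument: $\varphi\mapsto b_n(\varphi)$ has derivative bounded by $D(N-1)N^{-1/2}\le D\sqrt N$ in norm, so the relevant quadratic forms, multiplied by $\1_{O_n}$, are Lipschitz in $\varphi$ with a constant that is polynomial in $N$; choosing an $\varepsilon$-net of $[0,\pi/D]$ of cardinality polynomial in $N$, applying Lemma~\ref{lm-fq-haar}/\ref{lm-fq-two-haars} with $p$ large enough together with Markov and Borel–Cantelli on the net, and then controlling the oscillation between net points via the Lipschitz bound, yields a.s.\ uniform convergence on $[0,\pi/D]$. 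Simultaneously one needs a similar uniform-in-$z$ control on $\gamma_i$, which is also available since the resolvents are holomorphic and uniformly bounded off $[0,\lambda_++\varepsilon_1]$; a two-dimensional net in $(z,\varphi)$, or successive application of the normal-family/Montel argument of Lemma~\ref{lm-uniform-cvg} in $z$ followed by the net argument in $\varphi$, closes this.

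Finally, assembling the pieces: on a probability-one event, for $n$ large enough, $\hat\chi_n(\varphi)-\chi_n(\varphi)=\sum_{i=1}^q\bigl(\widehat T_i(\varphi)-T_i(\varphi)\bigr)$ plus the (a.s.\ eventually zero) first integral in \eqref{eq-expansion-aH}, and each $\widehat T_i(\varphi)-T_i(\varphi)$ is a contour integral over the fixed curve $\gamma_i$ of an integrand that converges to zero uniformly in $z$ and $\varphi$ — because $\widehat H_n(z)^{-1}\to H(z)^{-1}$ uniformly (the limit $H(z)^{-1}$ being bounded on $\gamma_i$, which avoids the poles $\rho_\ell$) and $\hat a,\hat a^*\to a,a^*$ uniformly — so the integral itself tends to zero uniformly in $\varphi$. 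Taking the maximum over $\varphi\in[0,\pi/D]$ and over the finitely many indices $i\le q$ completes the proof. I expect the main obstacle to be the genuinely uniform-in-$\varphi$ large-deviation control of the quadratic and bilinear forms: Lemmas~\ref{lm-fq-haar} and \ref{lm-fq-two-haars} are pointwise-in-$u$ statements, and converting them into a statement that holds simultaneously for the whole continuum $\{b_n(\varphi):\varphi\in[0,\pi/D]\}$ requires the net-plus-Lipschitz bootstrap with $p$ chosen large enough (as a function of the polynomial net cardinality) so that the Borel–Cantelli series converges — this is the one place where the specific structure (boundedness and smoothness) of the steering vectors $b_n(\varphi)$ is essential and must be used carefully.
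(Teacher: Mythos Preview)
Your overall strategy --- fixed contours $\gamma_i$, uniform control of $\widehat H-H$ via Lemma~\ref{lm-uniform-cvg}, and a net-plus-Lipschitz argument in $\varphi$ using high moments from Lemmas~\ref{lm-fq-haar}--\ref{lm-fq-two-haars} with Borel--Cantelli --- is exactly the paper's approach. The paper in fact uses a net of $n$ points in $z$ and $n^2$ points in $\varphi$, the Lipschitz constant being $K n$ (since $\sup_n\max_\varphi\|n^{-1}b'(\varphi)\|<\infty$), and takes $p=9$ in Lemma~\ref{lm-fq-haar} so that the Borel--Cantelli series converges; your outline of this step is correct.

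There is, however, a genuine gap at your very first step. The ``exact identity'' $\hat\chi_n(\varphi)=\sum_{i=1}^s b_n(\varphi)^*\widehat\Pi_{i,n}b_n(\varphi)$ is \emph{false}: the definition \eqref{eq:def-chi-hat} carries the weights $\zeta(\hat\lambda_{k,n})$, which depend on the individual eigenvalue $\hat\lambda_{k,n}$, not only on the block index $i(k)$. You cannot group the sum over $k$ into $b^*\widehat\Pi_i b$ without first disposing of these weights. Correspondingly, the contour-integral quantity $\widehat T_i(\varphi)$ does not converge to $b^*\Pi_i b$ but to $\zeta(\rho_i)^{-1}\,b^*\Pi_i b$ (this is precisely the content of \eqref{eq-T_k} and Theorem~\ref{bilin-forms}), so your ``$\hat\chi_n-\chi_n=\sum_i(\widehat T_i-T_i)$'' is off by these factors and does not close. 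The paper resolves this by the preliminary split
\[
\hat\chi(\varphi)-\chi(\varphi)=\sum_{k=1}^r\bigl(\zeta(\hat\lambda_k)-\zeta(\rho_{i(k)})\bigr)\,|b(\varphi)^*\hat u_k|^2
\;+\;\sum_{i=1}^s\Bigl(\zeta(\rho_i)\,b(\varphi)^*\widehat\Pi_i b(\varphi)-b(\varphi)^*\Pi_i b(\varphi)\Bigr),
\]
treating the first sum by Theorem~\ref{cvg-spk} and the continuity of $\zeta$ on $(\lambda_+,\infty)$ (uniform in $\varphi$ since $|b^*\hat u_k|\le 1$), and only then applying the contour-integral machinery to the second sum --- where the factor $\zeta(\rho_i)$ exactly cancels the residue factor in \eqref{eq-T_k}. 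Once you insert this split, the rest of your argument goes through as written. (Incidentally, in the setting of Theorem~\ref{n-consist} one has $q=s$, so your worry about blocks $i>q$ does not arise.)
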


\begin{proof}
Write 
\[
\hat\chi(\varphi) - \chi(\varphi) = 
\sum_{k=1}^r (\zeta(\hat\lambda_k) - \zeta(\rho_{i(k)}))
| b(\varphi)^* \hat u_k |^2 + 
\sum_{i=1}^s \left( \zeta(\rho_{i}) b(\varphi)^* \widehat\Pi_i b(\varphi) 
- b(\varphi)^* \Pi_i b(\varphi) \right)  . 
\] 
By Theorem \ref{cvg-spk} and the continuity of $\zeta$ on
$( \lambda_+, +\infty )$, the first term at the r.h.s. goes to zero
a.s. and uniformly in $\varphi$. Consider the second term. 
Let $\gamma_i$ be a small enough positively oriented circle which does not 
meet $\support(\pi) \cup \{ \sqrt{\rho_1}, \cdots, \sqrt{\rho_s} \}$ and such 
that only $\sqrt{\rho_i} \in \interior(\gamma_i)$. 
Since $\hat\lambda_k \toas \rho_{i(k)}$, 
$$\max_i \max_{\varphi} \left|  
b(\varphi)^* \widehat\Pi_i b(\varphi) - \widehat{T}_i(\varphi) \right| 
= 0$$ 
a.s. for $n$ large enough, where 
\[
\widehat{T}_i(\varphi) = 
\frac{1}{\imath \pi}
\oint_{\gamma_i}  
\hat a^*(z, \varphi) \widehat H(z)^{-1} \hat a(z, \varphi) \, dz  
\] 
Recalling Eq. \eqref{eq-T_k}, it will
therefore be enough to prove that
$$
\max_{1\le i\le s} \max_{\varphi\in[0,\pi/D]} 
| Z_{i}(\varphi) | \xrightarrow[n\to \infty ]{a.s.} 0 \ ,
$$
where
$$
Z_{i}(\varphi) = \frac{1}{\imath \pi}
\oint_{\gamma_i} 
\left( \hat a^*(z, \varphi) \widehat H(z)^{-1} \hat a(z, \varphi) 
- a^*(z, \varphi) H(z)^{-1} a(z, \varphi) \right) 
\, dz \ .
$$
We have 
\[
\max_\varphi | Z_{i}(\varphi) | \leq 
2 R \int_0^1 
\max_\varphi e(\sqrt{\rho_i} + R e^{2 \imath \pi \theta} ,\varphi) 
\, d\theta 
\]
where $R$ is the radius of $\gamma_i$ and where 
\begin{align*} 
e(z,\varphi) &= 
\left| 
\hat a^*(z, \varphi) \widehat H(z)^{-1} \hat a(z, \varphi) 
- a^*(z, \varphi) H(z)^{-1} a(z, \varphi) \right| . \\ 
&\leq 
\left| (\hat a^* - a^*) H^{-1} \hat a \right| + 
\left| a H^{-1} (\hat a - a) \right| + 
\left| \hat a^* ( \widehat H^{-1} - H^{-1} ) \hat a \right| . 
\end{align*} 
Since $\| H^{-1} \|$, $\max_\varphi \| a \|$ and $\max_\varphi \| \hat a \|$ 
are bounded on $\gamma_i$, $e(z,\varphi)$ satisfies on this path 
\[
e(z,\varphi) \leq K \left( \| \hat a(z,\varphi) - a(z,\varphi) \| 
+ \| \widehat H(z)^{-1} - H(z)^{-1} \|  \right) \ .
\] 
By Lemma \ref{lm-uniform-cvg} and the fact that $\| H^{-1} \|$ is bounded
on $\gamma_i$, 
the term $\| \widehat H^{-1} - H^{-1} \| = 
\| \widehat H^{-1} ( H - \widehat H) H^{-1} \|$ converges to zero uniformly 
on $\gamma_i$ with probability one. To obtain the result, we prove that 
$\| \hat a - a \| \toas 0$ and that this convergence is uniform on 
$(z, \varphi) \in  \gamma_i \times [0, \pi/D]$.
Let us focus on the first term $z u_1^* (Q(z^2) - m(z^2) I ) b(\varphi)$
of $\hat a - a$, where we recall that $u_1$ is the first column of $U$. 
Since $\| b(\varphi) \| = \| u_1 \| = 1$, 
\[ 
| z u_1^* (Q(z^2) - m(z^2) I ) b(\varphi) | \leq 
| z u_1^* (Q(z^2) - \alpha(z^2) I ) b(\varphi) | + 
| z( \alpha(z^2) - m(z^2) ) |\ .
\]
With probability one, the second term converges to zero on  
$\gamma_i$, and the convergence is uniform (along the 
principle of the proof of Lemma \ref{lm-uniform-cvg}). 
Since 
$$
\sup_n \max_\varphi \| n^{-1} b'(\varphi) \| 
= 
\sup_n \max_\varphi \left\| n^{-1} N^{-1/2} \begin{bmatrix} 
\ell D \exp(-\imath \ell D \varphi) \end{bmatrix}_{\ell=0}^{N-1} \right\|
< \infty\ ,
$$
the term 
\[
\xi(z, \varphi) = \1_{O_n} \times z u_1^* (Q(z^2) - \alpha(z^2) I) 
b(\varphi)
\]
satisfies 
\[
| \xi(z_1, \varphi_1) - \xi(z_2, \varphi_2) | \leq 
K( n |\varphi_1 - \varphi_2| + | z_1 - z_2 | ) 
\]
for every $(z_1, \varphi_1)$, $(z_2, \varphi_2)$ in 
$\gamma_i \times [0, \pi/D]$. Therefore, it will be enough to prove that
\[
\max_{(z, \varphi) \in A_n \times B_n} \xi(z, \varphi) 
\xrightarrow[n\to\infty]{\text{a.s.}} 0 
\]
where $A_n$ contains $n$ regularly spaced points in $\gamma_i$ and $B_n$ 
contains $n^2$ regularly spaced points in $[0, \pi/D]$. This can be obtained 
from Lemma \ref{lm-fq-haar} with $p=9$, Markov inequality and Borel Cantelli's 
lemma. The other terms of $\hat a - a$ can be handled similarly. 
\end{proof}

We now prove Theorem \ref{n-consist} by following the ideas of 
\cite{han-jap71, han-jap73}. To that end, we need the following lemma, proven 
in \cite{cib-lou-ser-gia-it02}: 
\begin{lemma} 
\label{ciblat} 
Let $(c_N)$ be a sequence of real numbers belonging to a compact of 
$[-1/2, 1/2]$ and converging to $c$. Let 
\[
q_N(c_N) = \frac 1N \sum_{k=0}^{N-1} \exp(- 2 \imath \pi k c_N ) \ .
\]
Then the following hold true:
\begin{eqnarray*}
q_N(c_N) &\xrightarrow[N\to\infty]{}& 0 \quad \text{if} \ c \neq 0\ , \\
q_N(c_N) &\xrightarrow[N\to\infty]{}& 0 \quad \text{if} \ c = 0 \ \text{and} \ 
N|c_N - c| \to \infty\ , \\ 
q_N(c_N) &\xrightarrow[N\to\infty]{}& \exp(-\imath \pi d) \sinc(d) 
\quad \text{if} \ c = 0 \ \text{and} \ N|c_N - c| \to d\ , 
\end{eqnarray*}
where $\sinc$ stands as usual for sine cardinal.
\end{lemma}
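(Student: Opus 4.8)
The plan is to reduce $q_N(c_N)$ to a closed form and then treat the three regimes separately by elementary estimates. First I would note that whenever $c_N\neq 0$ — which holds for all $n$ large in each of the three cases, the trivial case $c_N=0$ giving $q_N(c_N)=1$ — summing the geometric series yields
\[
q_N(c_N)=\frac1N\,\frac{1-e^{-2\imath\pi N c_N}}{1-e^{-2\imath\pi c_N}}
=\frac1N\,e^{-\imath\pi(N-1)c_N}\,\frac{\sin(\pi N c_N)}{\sin(\pi c_N)}\ ,
\]
the second identity by factoring $e^{-\imath\pi N c_N}$ out of the numerator and $e^{-\imath\pi c_N}$ out of the denominator. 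All three assertions then follow from estimates on this Dirichlet-kernel expression.

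For the first assertion ($c\neq0$): since $(c_N)$ stays in a compact subset of $[-1/2,1/2]$ and $c_N\to c$, we have $|c_N|\geq|c|/2>0$ for $n$ large, so $|\sin(\pi c_N)|=\sin(\pi|c_N|)\geq\sin(\pi|c|/2)>0$ (using $\pi|c_N|\le\pi/2$ and monotonicity of $\sin$ on $[0,\pi/2]$); together with $|\sin(\pi Nc_N)|\leq1$ this gives $|q_N(c_N)|\leq 1/(N\sin(\pi|c|/2))\to0$. For the second assertion ($c=0$, $N|c_N|\to\infty$): the elementary bound $|\sin x|\geq(2/\pi)|x|$ on $[-\pi/2,\pi/2]$ gives $|\sin(\pi c_N)|\geq2|c_N|$, whence $|q_N(c_N)|\leq 1/(2N|c_N|)\to0$.

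For the third assertion ($c=0$ and $Nc_N\to d$, understood as convergence of the signed quantity $Nc_N$), I would pass to the limit factor by factor in the closed form. One has $\sin(\pi Nc_N)\to\sin(\pi d)$; since $c_N=(Nc_N)/N\to0$, the equivalence $\sin(\pi c_N)\sim\pi c_N$ gives $N\sin(\pi c_N)=\pi Nc_N(1+o(1))\to\pi d$ when $d\neq0$; and $e^{-\imath\pi(N-1)c_N}=e^{-\imath\pi Nc_N}e^{\imath\pi c_N}\to e^{-\imath\pi d}$. Multiplying, $q_N(c_N)\to e^{-\imath\pi d}\,\sin(\pi d)/(\pi d)=e^{-\imath\pi d}\sinc(d)$. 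The degenerate sub-case $d=0$ is handled directly from the definition: for $0\leq k\leq N-1$ we have $|kc_N|\leq N|c_N|\to0$, so $e^{-2\imath\pi kc_N}\to1$ uniformly in $k$ and hence $q_N(c_N)\to1=\sinc(0)$.

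There is no real obstacle here; the only step requiring a little care is the third case, where one must combine the asymptotic $\sin(\pi c_N)\sim\pi c_N$ with the precise rate $Nc_N\to d$ and keep track of the phase $e^{-\imath\pi(N-1)c_N}$ so that it contributes exactly $e^{-\imath\pi d}$ in the limit, which is what makes the asymmetric expression $e^{-\imath\pi d}\sinc(d)$ appear rather than a real limit.
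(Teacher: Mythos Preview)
Your proof is correct. The closed-form Dirichlet-kernel identity is right, and your case analysis handles all three regimes cleanly; the only delicate point is the signed interpretation of $Nc_N\to d$ in the third case, which you flag explicitly and which is indeed needed for the phase $e^{-\imath\pi d}$ to be well defined (the lemma as stated with $N|c_N|\to d$ is slightly ambiguous on this point, but your reading is the natural one and is what the application in the paper actually uses).

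As for comparison: the paper does not prove this lemma at all --- it simply cites an external reference \cite{cib-lou-ser-gia-it02}. Your argument is a self-contained elementary proof via the geometric-series closed form, so there is nothing to compare against in the paper itself; your write-up would in fact fill a gap that the paper leaves to the literature.
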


\begin{proof}[Proof of Theorem \ref{n-consist}] 
We start by observing that $\chi(\varphi) = d(\varphi)^* (B^* B)^{-1}
d(\varphi)$ where $B$ is the matrix defined in \ref{ass:P-finer} and
where $d(\varphi) = \begin{bmatrix} b(\varphi_k)^*
  b(\varphi) \end{bmatrix}_{k=1}^r$.  By Lemma \ref{ciblat}, $B^* B
\to I_r$, hence $\chi(\varphi) -
\| d(\varphi) \|^2 \to 0$. 

In the remainder of the proof, we shall stay in the probability one
set where the uniform convergence in the statement of Proposition
\ref{prop-uniform-cvg} holds true.  Taking $k=1$ without loss of
generality, we shall show that any sequence $\hat\varphi_{1,n}$ for
which $\hat\chi(\hat\varphi_{1,n})$ attains its maximum in the closure
of a small neighborhood of $\varphi_1$ satisfies $N(\hat\varphi_{1,n}
- \varphi_1) \to 0$. Given a sequence of such $\hat\varphi_{1,n}$,
assume we can extract a subsequence $\hat\varphi_{1,n^*}$ such that
$N| \hat\varphi_{1,n^*} - \varphi_1 | \to \infty$.  In this case,
Lemma \ref{ciblat} and the observations made above on the structure of
$\chi(\varphi)$ show that $\chi(\hat\varphi_{1,n^*}) \to 0$.  Since
$\max_\varphi | \hat\chi(\varphi) - \chi(\varphi) | \to 0$,
$\hat\chi(\hat\varphi_{1,n^*}) \to 0$. But $\hat\chi(\varphi_{1}) \to
\chi(\varphi_{1}) = 1$, which contradicts the fact that
$\hat\varphi_{1,n^*}$ maximizes $\hat\chi$.  Hence the sequence $N
(\hat\varphi_{1,n^*} - \varphi_1)$ belongs to a compact.  Assume $N
(\hat\varphi_{1,n^*} - \varphi_1) \not\to 0$.  If we take a further
subsequence of the latter that converges to a constant $d \neq 0$,
then by Lemma \ref{ciblat}, $\hat\chi$ converges to $\sinc(d)^2 < 1$
along this subsequence, which also raises a contradiction. This proves
the theorem.\end{proof}

\section{Second Order Analysis of the Spike MUSIC Estimator} 
\label{sec-clt}

In order to perform the second order analysis, we also assume: 
\begin{assumption}\label{ass:strong-conv}
Let $\lambda_-$, $\lambda_+$, $\alpha$ and $m$ be as in \ref{ass:resolvent}.
Then for any $z \in \CC - [\lambda_-, \lambda_+]$, 
$\sqrt{n} \left( \alpha(z) - m(z) \right)$ converges in probability to zero. 
\end{assumption}

\begin{remark}
\label{rq-2order} 
If $\sqrt{n} X$ is standard Gaussian and if $c_n = N/n$ satisfies 
$\sqrt{n}(c_n - c) \to 0$, then Assumption \ref{ass:strong-conv} is satisfied. 
Indeed, call $m_n(z)$ the Stieltjes Transform of the
Mar\v cenko-Pastur distribution, \emph{i.e.}, the analytic continuation of
\eqref{st-mp}, when $c$ is replaced with $c_n$, 
and let $\pi_n$ be the associated probability measure. 
For $z \in \CC - [\lambda_-, \lambda_+]$, function $f(x) = (x-z)^{-1}$ is 
analytic 
outside the support of $\pi_n$ for $n$ large, and \cite[Th.1.1]{sil-bai-ap04} 
can be applied to show that $\sqrt{n} ( \alpha_n(z) - m_n(z) ) \toP 0$.
When $\sqrt{n}(c_n - c) \to 0$, it is furthermore clear that 
$\sqrt{n} ( m_n(z) - m(z) ) \to 0$.
\end{remark} 

The main result of this section is the following: 
\begin{theorem}
\label{2nd-order} 
Let Assumptions \ref{ass:asymptotics}-\ref{ass:strong-conv} hold true. 
Then the estimates $\hat\varphi_{k,n}$ satisfy 
\begin{equation}
\label{clt} 
n^{3/2}\begin{bmatrix} 
\hat\varphi_{1,n} - \varphi_1 \\
\vdots \\
\hat\varphi_{r,n} - \varphi_r 
\end{bmatrix} 
\xrightarrow[n\to\infty]{{\mathcal D}} 
{\mathcal N}\left(0, \begin{bmatrix} \sigma_1^2 I_{j_1} & & \\ &\ddots& \\ 
& & \sigma_s^2 I_{j_s} \end{bmatrix} \right) 
\end{equation} 
where 
\[ 
\sigma_i^2 = \frac{6}{c^2 D^2} \left( 
\frac{m'(\rho_i) - m(\rho_i)^2}{c m(\rho_i)^2} + 
\omega_i^2 \left( m(\rho_i) + \rho_i m'(\rho_i) \right) \right), \quad 
1\le i\le s\ . 
\]
\end{theorem}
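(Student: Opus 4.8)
The plan is to carry out a standard "delta-method via a second-order Taylor expansion of $\hat\chi_n$ around $\varphi_k$" argument, where the key quantitative input is a Central Limit Theorem for the localization function (and its derivatives) at the true angles. Since Theorem~\ref{n-consist} gives $\hat\varphi_{k,n}$ as a local maximum of $\hat\chi_n$ with $n(\hat\varphi_{k,n}-\varphi_k)\toas 0$, we have $\hat\chi_n'(\hat\varphi_{k,n})=0$, and a Taylor expansion yields
\[
0 = \hat\chi_n'(\hat\varphi_{k,n}) = \hat\chi_n'(\varphi_k) + \hat\chi_n''(\tilde\varphi_{k,n})(\hat\varphi_{k,n}-\varphi_k)
\]
for some $\tilde\varphi_{k,n}$ between $\hat\varphi_{k,n}$ and $\varphi_k$. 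So I would establish: (i) $\hat\chi_n''(\tilde\varphi_{k,n})$ converges a.s.\ (or in probability) to the nonzero limit $\chi''(\varphi_k)$, which by the structure $\chi(\varphi)\simeq\|d(\varphi)\|^2$ (using $B^*B\to I_r$ from Lemma~\ref{ciblat}) and $b_n(\varphi)=N^{-1/2}[\exp(-\imath D\ell\varphi)]$ equals $-\tfrac{D^2}{3}N^2(1+o(1))$ up to the normalization that produces the $c^2D^2/6$ factor; and (ii) $n^{3/2}\hat\chi_n'(\varphi_k)$ converges in distribution to a centered Gaussian. Combining (i)–(ii) via Slutsky gives the CLT with variance $\sigma_i^2$ obtained as the limiting variance of $n^{3/2}\hat\chi_n'(\varphi_k)$ divided by $(\chi''(\varphi_k))^2$; the block-diagonal structure and asymptotic independence across distinct angles follows because the contours $\gamma_i$ are disjoint and the associated quadratic forms decorrelate.

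\textbf{The core CLT.} The heart of the matter is step (ii): a CLT for $\hat\chi_n'(\varphi_k)$, equivalently (after dropping the negligible $(\zeta(\hat\lambda_k)-\zeta(\rho_{i(k)}))$ terms, controlled by Theorem~\ref{cvg-spk} and Assumption~\ref{ass:strong-conv}) for the quadratic forms $\partial_\varphi\big[b_n(\varphi)^*\widehat\Pi_{i,n}b_n(\varphi)\big]$ at $\varphi=\varphi_k$, scaled by $n^{3/2}$. I would return to the contour-integral representation \eqref{eq-expansion-aH}, restrict the contour to $\gamma_i$ as justified in the proof of Theorem~\ref{bilin-forms}, and expand $\hat a^*(z,\varphi)\widehat H(z)^{-1}\hat a(z,\varphi)$ around the deterministic $a^*H^{-1}a$. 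The fluctuation is linear (to leading order) in the entries of $\widehat H - H$, $\hat a - a$, which by \eqref{eq:def-a-hat-star}–\eqref{eq-a-hat-a} are bilinear/linear forms $u^*(Q(z^2)-m(z^2)I)b$, $u^*Q(z^2)b$ and $u^*X\widetilde Q(z^2)v$ in Haar-distributed vectors. The plan is to prove a joint CLT for the vector of these forms — evaluated at the relevant finitely many $z$ on $\gamma_i$ and their $\varphi$-derivatives — using the Poincaré–Nash inequality of Lemma~\ref{lm-NP-like} to control variances (and higher cumulants, showing they vanish) and Lemma~\ref{lm-moments-Haar} to compute the covariance structure, exactly as in the proofs of Lemmas~\ref{lm-fq-haar}–\ref{lm-fq-two-haars} but now tracking the limiting Gaussian rather than just $O(N^{-p/2})$ bounds. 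A martingale CLT or a cumulant-method CLT on $\mathcal U(N)\times\mathcal U(n)$ would do; the $n^{3/2}$ scaling comes from the $n^{-1/2}$ fluctuation of each quadratic form combined with the two $b'$-derivatives each contributing a factor $N$ (the $\ell D\exp(-\imath\ell D\varphi)$ weights), with one derivative hitting $b$ on each side. Contour integration against the explicit $H(z)^{-1}$ in \eqref{inv(H)} and the Residue Theorem then turns the covariance of the quadratic forms into the closed-form expression involving $m'(\rho_i)$, $m(\rho_i)^2$, $\rho_i$ and $\omega_i^2$; Assumption~\ref{ass:strong-conv} guarantees that replacing $\alpha_n$ by $m$ throughout costs only $o(n^{-1/2})$ and hence does not affect the limit, and the $\sqrt{n}(S_n^*S_n-O^2)=\mathcal O(1)$ part of Assumption~\ref{ass:P-finer} ensures the perturbation-side randomness/discrepancy is also at the right order.

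\textbf{Main obstacle.} The genuinely hard part is the joint CLT for the collection of Haar quadratic forms with the exact covariance matrix: one must (a) identify the right finite-dimensional family of forms (including the mixed $U$–$V$ term $u^*X\widetilde Q v$, whose fluctuations couple $L_n$ and $R_n$), (b) compute all pairwise covariances in the $n\to\infty$ limit — for which the bi-unitary invariance and Lemma~\ref{lm-moments-Haar} give the leading term but second-order Weingarten corrections must be shown negligible — and (c) prove asymptotic Gaussianity, i.e.\ that all cumulants of order $\ge 3$ of the $\sqrt n$-scaled forms vanish, which is where an iterated application of the Poincaré–Nash inequality à la Lemma~\ref{lm-fq-haar} is the natural tool but requires care to get the scaling exponents right. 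Once this CLT is in hand, the passage to $n^{3/2}(\hat\varphi_{k,n}-\varphi_k)$ is routine delta-method bookkeeping, and verifying that the algebra collapses to the stated $\sigma_i^2=\tfrac{6}{c^2D^2}\big(\tfrac{m'(\rho_i)-m(\rho_i)^2}{cm(\rho_i)^2}+\omega_i^2(m(\rho_i)+\rho_i m'(\rho_i))\big)$ is a finite residue computation using $g(\rho_i)=\rho_i m(\rho_i)\tilde m(\rho_i)=\omega_i^{-2}$ and $\tilde m=cm-(1-c)/z$.
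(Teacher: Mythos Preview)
Your overall architecture matches the paper's: Taylor-expand $\hat\chi_n'$ at $\varphi_k$, show $n^{-2}\hat\chi_n''(\varphi_k)\to -c^2D^2/6$ (the paper's Lemma~\ref{denom}), reduce $n^{-1/2}\hat\chi_n'(\varphi_k)$ to a contour integral via \eqref{eq-expansion-aH}, expand $\widehat H^{-1}$ to first order around $H^{-1}$, apply the Residue Theorem using \eqref{inv(H)}, and finish with a CLT for finitely many Haar quadratic forms. The paper organizes the first-order expansion into three explicit pieces $X_{1,i},X_{2,i},X_{3,i}$ plus remainders $q_i$, shows $\Re(X_{3,i})\toP 0$ and $q_i\toP 0$, and identifies the surviving contribution as $b_k^*(Q(\rho_{i(k)})-mI)b_k^\perp/m - \omega_{i(k)} b_k^* U_{i(k)}V_{i(k)}^*\widetilde Q X^* b_k^\perp$, which is exactly the linear-in-fluctuation term you anticipate.

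The one substantive difference is how the core CLT (Proposition~\ref{clt-haar}) is established. You propose a cumulant method on $\mathcal U(N)\times\mathcal U(n)$, iterating the Poincar\'e--Nash inequality to kill cumulants of order $\geq 3$ and invoking Weingarten-type corrections. The paper instead uses the Gaussian representation of Haar isometries: replace the relevant Haar columns by $Z(Z^*Z)^{-1/2}$ with $Z$ standard Gaussian independent of $\Gamma$, use the LLN $N^{-1}Z^*Z\to I$ to reduce to $N^{-1/2}Z^*(D_k-N^{-1}\tr D_k)Z$ and $N^{-1/2}Z^*C_k\widetilde Z$, and conclude by the Lyapunov CLT for sums of independent rows, conditionally on $\Gamma$. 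This is considerably shorter and gives the covariance $\diag(m'(\rho_k)-m(\rho_k)^2,\ m(\rho_k)+\rho_k m'(\rho_k))$ directly from $N^{-1}\tr Q^2$ and $N^{-1}\tr(XX^*Q^2)$. Your route would work in principle but is heavier machinery for the same conclusion.

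Two minor corrections. First, your scaling narrative is off: $\hat\chi'(\varphi_k)=2\Re(b^*\widehat\Pi b')$ carries only \emph{one} $b'$, so the right bookkeeping is $\hat\chi'\sim N\cdot n^{-1/2}=n^{1/2}$ and $\hat\chi''\sim N^2$, whence $n^{3/2}(\hat\varphi_k-\varphi_k)\asymp n^{-1/2}\hat\chi'/(n^{-2}\hat\chi'')$; it is $n^{-1/2}\hat\chi'(\varphi_k)$ (not $n^{3/2}\hat\chi'$) that converges in law. Second, to drop the $(\zeta(\hat\lambda_k)-\zeta(\rho_{i(k)}))$ correction at the $n^{-1/2}$ scale you need tightness of $\sqrt{n}(\hat\lambda_k-\rho_{i(k)})$ (the paper's Proposition~\ref{spikes-tight}, which uses the $\sqrt{n}(S_n^*S_n-O^2)=\mathcal O(1)$ part of Assumption~\ref{ass:P-finer}); Theorem~\ref{cvg-spk} alone gives only a.s.\ convergence without rate.
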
 
When $\sqrt{n} X$ is standard Gaussian, plugging the r.h.s. of 
\eqref{st-mp} into this expression leads after some derivations to: 
\begin{corollary}
\label{var-mp}
If $\sqrt{n} X$ is standard Gaussian and if $\sqrt{n}(c_n - c) \to 0$, 
the convergence \eqref{clt} holds true with 
\[
\sigma_i^2 = \frac{6}{c^2 D^2} \frac{\omega_i^2 +1}{\omega_i^4 - c} . 
\]
\end{corollary}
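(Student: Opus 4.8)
The plan is to linearize the estimating equation $\hat\chi_n'(\hat\varphi_{k,n})=0$ and to reduce the problem to a central limit theorem for bilinear forms of the resolvent of $X_n$. By Theorem \ref{n-consist} there is, for each $k$, a local maximum $\hat\varphi_{k,n}$ of the smooth function $\hat\chi_n$ with $n(\hat\varphi_{k,n}-\varphi_k)\toas 0$; since the maximum is interior, $\hat\chi_n'(\hat\varphi_{k,n})=0$, and the mean value theorem produces $\tilde\varphi_{k,n}$ between $\varphi_k$ and $\hat\varphi_{k,n}$ (hence $\tilde\varphi_{k,n}\toas\varphi_k$) with
\[
n^{3/2}(\hat\varphi_{k,n}-\varphi_k)=-\,\frac{n^{-1/2}\hat\chi_n'(\varphi_k)}{\,n^{-2}\hat\chi_n''(\tilde\varphi_{k,n})\,}\ .
\]
It then suffices to show that the denominator converges in probability to a deterministic nonzero constant independent of $k$, and that $\bigl(n^{-1/2}\hat\chi_n'(\varphi_k)\bigr)_{k=1}^r$ converges in law to a centered Gaussian vector; Slutsky's lemma will conclude, jointly in $k$.

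For the denominator I would use that, writing $b_n(\varphi)=N^{-1/2}[e^{-\imath D\ell\varphi}]_{\ell=0}^{N-1}$, the number $\beta_n:=b_n(\varphi_k)^*b_n'(\varphi_k)$ is purely imaginary while $\gamma_n:=b_n(\varphi_k)^*b_n''(\varphi_k)\in\RR$ with $n^{-2}\gamma_n\to -c^2D^2/3$ and $n^{-2}|\beta_n|^2\to c^2D^2/4$, and that (from $b_n(\varphi_l)^*b_n(\varphi_{l'})\to\delta_{l,l'}$, Lemma \ref{ciblat}, and analogous oscillatory estimates) the vectors $n^{-2}b_n''(\varphi_k)$ and $n^{-1}b_n'(\varphi_k)$, after subtracting their components along $b_n(\varphi_k)$, stay uniformly bounded and become orthogonal to $\mspan(B_n)$. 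Plugging these decompositions into
\[
\hat\chi_n''(\varphi_k)=\sum_{k'}\Bigl(2\Re\{(b_n''(\varphi_k)^*\hat u_{k'})(\hat u_{k'}^*b_n(\varphi_k))\}+2|b_n'(\varphi_k)^*\hat u_{k'}|^2\Bigr)\zeta(\hat\lambda_{k'})
\]
and invoking Theorem \ref{bilin-forms} (for the bounded vectors $n^{-2}b_n''(\varphi_k)$, $n^{-1}b_n'(\varphi_k)$), Theorem \ref{cvg-spk}, and the first order relation $\hat\chi_n(\varphi_k)\to\chi_n(\varphi_k)=1$, one gets $n^{-2}\hat\chi_n''(\varphi_k)\to -2c^2D^2/3+c^2D^2/2=-c^2D^2/6$, uniformly for $\varphi$ near $\varphi_k$ (by the argument of Proposition \ref{prop-uniform-cvg}); hence $n^{-2}\hat\chi_n''(\tilde\varphi_{k,n})\toas -c^2D^2/6\ne 0$.

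For the numerator, note first that $\chi_n'(\varphi_k)=2\Re\{b_n'(\varphi_k)^*\Pi_n b_n(\varphi_k)\}=2\Re\{\beta_n\}=0$; writing $b_n'(\varphi_k)=\beta_n b_n(\varphi_k)+b_n^\perp(\varphi_k)$ with $b_n^\perp(\varphi_k)\perp b_n(\varphi_k)$ and using that $\beta_n$ is imaginary gives
\[
\hat\chi_n'(\varphi_k)=\sum_{k'}2\Re\bigl\{(b_n^\perp(\varphi_k)^*\hat u_{k'})(\hat u_{k'}^*b_n(\varphi_k))\bigr\}\zeta(\hat\lambda_{k'})\ ,
\]
where $n^{-2}\|b_n^\perp(\varphi_k)\|^2\to c^2D^2/12$, the normalized $b_n^\perp(\varphi_k)$ is asymptotically orthogonal to $\mspan(B_n)$, and $n^{-2}b_n^\perp(\varphi_k)^*b_n^\perp(\varphi_{k'})\to\delta_{k,k'}c^2D^2/12$. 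I would then rewrite $\sum_{k':i(k')=i}\hat u_{k'}\hat u_{k'}^*\zeta(\hat\lambda_{k'})$ as a contour integral of $(\bs\Sigma-zI)^{-1}$ around the group-$i$ singular values of $\Sigma_n$ (as in Section \ref{vap-spikes}), use Woodbury to reach the $\widehat H^{-1}$-form of \eqref{eq-expansion-aH}, and expand around the deterministic equivalent of Theorem \ref{bilin-forms}: replacing $\zeta(\hat\lambda_{k'})$ by $\zeta(\rho_{i(k')})$ inside each group costs only $o_P(\sqrt n)$ (since $b_n^\perp(\varphi_k)^*\hat u_{k'}=O_P(\sqrt n)$, from $b_n^\perp(\varphi_k)^*\widehat\Pi_{i(k'),n}b_n^\perp(\varphi_k)=O_P(n)$ obtained as in the proof of Theorem \ref{bilin-forms}), and among the products in the expansion only $(\hat a_1^*-a_1^*)H(z)^{-1}a_2(z,\varphi_k)$ is of order $\sqrt n$ — with $\hat a_1^*(z,\varphi_k)-a_1^*(z,\varphi_k)=b_n^\perp(\varphi_k)^*[\,z(Q(z^2)-m(z^2)I)U_n\quad X_n\widetilde Q(z^2)V_n\Omega_n\,]$ — while $a_1^*,a_2,\widehat H-H,\hat a_2-a_2$ are of order $1,1,n^{-1/2},n^{-1/2}$. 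Residue calculus over $\gamma_{i(k)}$ (encircling only $\sqrt{\rho_{i(k)}}$), together with $\Pi_{\ell,n}b_n(\varphi_k)\to\delta_{\ell,i(k)}b_n(\varphi_k)$, localizes everything at $z^2=\rho_{i(k)}$ and, with Assumption \ref{ass:strong-conv} used to discard the $(\alpha-m)$-term, produces
\[
n^{-1/2}\hat\chi_n'(\varphi_k)=2\Re\bigl\{\kappa_{i(k)}^{(1)}\,n^{-1/2}b_n^\perp(\varphi_k)^*(Q(\rho_{i(k)})-m(\rho_{i(k)})I)b_n(\varphi_k)+\kappa_{i(k)}^{(2)}\,\omega_{i(k)}\,n^{-1/2}b_n^\perp(\varphi_k)^*X_n\widetilde Q(\rho_{i(k)})\tilde v_{k,n}\bigr\}+o_P(1)
\]
for explicit constants $\kappa_i^{(1)},\kappa_i^{(2)}$ depending only on $\rho_i$ and bounded deterministic $\tilde v_{k,n}\in\CC^n$ with $\|\tilde v_{k,n}\|\to1$.

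What remains — and is the hard part — is a joint CLT for the two families of resolvent bilinear forms just displayed. Writing $X_n=L_n\Gamma_n R_n^*$, these are bilinear forms in the independent Haar factors $L_n,R_n$ against the fast vector $b_n^\perp(\varphi_k)$ (of norm $\asymp n$) and slow deterministic vectors; I would establish the CLT and compute the covariance from the Poincar\'e--Nash inequality (Lemma \ref{lm-NP-like}) via a Lindeberg/cumulant argument in the spirit of the proof of Lemma \ref{lm-fq-haar}, the second moments coming from Lemma \ref{lm-moments-Haar}. The covariance turns out block-diagonal as claimed: the two displayed terms are asymptotically uncorrelated (they involve $L_n,R_n$ in an unbalanced way relative to each other) and each circularly symmetric, while the geometric prefactors collapse to $n^{-2}b_n^\perp(\varphi_k)^*b_n^\perp(\varphi_{k'})\to\delta_{k,k'}c^2D^2/12$ and $u_{k,n}^*u_{k',n}=\delta_{k,k'}$, which kill all cross-covariances; and, using $N^{-1}\tr Q(z)^2\to m'(z)$, $\tilde m=cm-(1-c)/z$ and $\omega_i^2\rho_i m(\rho_i)\tilde m(\rho_i)=1$ in the residue bookkeeping, the $k$-th limiting variance depends on $k$ only through $i(k)$ and equals $v_{i(k)}=\tfrac{c^2D^2}{6}\bigl(\tfrac{m'(\rho_{i(k)})-m(\rho_{i(k)})^2}{c\,m(\rho_{i(k)})^2}+\omega_{i(k)}^2(m(\rho_{i(k)})+\rho_{i(k)}m'(\rho_{i(k)}))\bigr)$. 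Slutsky then yields $n^{3/2}(\hat\varphi_{k,n}-\varphi_k)_k\xrightarrow[n\to\infty]{\mathcal D}\mathcal N\bigl(0,\diag\bigl(v_{i(k)}/(c^2D^2/6)^2\bigr)_k\bigr)=\mathcal N\bigl(0,\diag(\sigma_1^2 I_{j_1},\dots,\sigma_s^2 I_{j_s})\bigr)$, i.e.\ \eqref{clt}. Besides this CLT (and the exact tracking of constants through the residue calculus), the other technical point to watch is the case $j_i>1$, where one must argue throughout with $\widehat\Pi_{i,n}$ rather than individual eigenvectors and show that replacing the within-group $\hat\lambda_{k'}$ by $\rho_i$ in the $\zeta$-weights costs only $o_P(\sqrt n)$.
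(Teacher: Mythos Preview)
Your proposal is essentially a proof of Theorem \ref{2nd-order} (the general CLT), from which Corollary \ref{var-mp} follows in the paper by simply substituting the Mar\v cenko--Pastur Stieltjes transform \eqref{st-mp} into the general variance formula $\sigma_i^2=\tfrac{6}{c^2D^2}\bigl(\tfrac{m'(\rho_i)-m(\rho_i)^2}{c\,m(\rho_i)^2}+\omega_i^2(m(\rho_i)+\rho_i m'(\rho_i))\bigr)$ and simplifying with $\omega_i^2\rho_i m(\rho_i)\tilde m(\rho_i)=1$---a purely algebraic step you stop short of. Your outline for Theorem \ref{2nd-order} (Taylor-expand $\hat\chi'(\hat\varphi_k)=0$, show $n^{-2}\hat\chi''\to-c^2D^2/6$, represent the numerator as a contour integral, expand $\hat a^*\widehat H^{-1}\hat a-a^*H^{-1}a$, and invoke a CLT for resolvent bilinear forms) is exactly the paper's.

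Two organizational differences are worth recording. First, you split $b_n'(\varphi_k)=\beta_n b_n(\varphi_k)+b_n^\perp(\varphi_k)$ \emph{before} the contour expansion; since $b_n^\perp(\varphi_k)^*U_n=O(1)$ you get $a_1^*=O(1)$, so only $e_1^*H^{-1}a_2$ is of order $\sqrt n$, and the analogues of the paper's $X_{2,i}$ and $X_{3,i}$ are negligible by size alone. The paper keeps $b'$ intact, which forces it to compute $X_{1,i},X_{2,i},X_{3,i}$ separately and to dispose of $\Re(X_{3,i})$ through a degree-two residue argument---your route is cleaner here and also lets you replace $\zeta(\hat\lambda_{k'})$ by $\zeta(\rho_{i(k')})$ using only $\hat\lambda_{k'}\toas\rho_{i(k')}$, without needing the spike tightness of Proposition \ref{spikes-tight}. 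Second, for the CLT on the bilinear forms (the paper's Proposition \ref{clt-haar}) you propose a Poincar\'e--Nash/Lindeberg argument; the paper instead uses bi-unitary invariance to write Haar columns as $Z(Z^*Z)^{-1/2}$ with $Z$ standard Gaussian, reducing the statement to a row-wise sum of independent terms and a Lyapunov CLT, which is more direct than the cumulant route you sketch.

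One point to tighten: the bound $b_n^\perp(\varphi_k)^*\widehat\Pi_{i,n}b_n^\perp(\varphi_k)=O_P(n)$ does not follow from Theorem \ref{bilin-forms} as stated (applied to $b_n^\perp/\|b_n^\perp\|$ it only yields $o_P(n^2)$); you need to rerun the proof with the extra input $a_1^*=O(1)$, after which the dominant error term is $e_1^*H^{-1}e_2=O_P(1)$ and the claimed $O_P(n)$ follows.
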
 
This corollary calls for some comments: 

\begin{remark}[Efficiency at high SNR]
Recalling that $\omega_i^2 > \sqrt{c}$ 
is the condition for the existence of a corresponding isolated eigenvalue 
(Corollary 
\ref{mp}), we observe that the estimator variance for $\varphi_k$ goes to 
infinity as the corresponding $\omega_i^2$ decreases to $\sqrt{c}$. At the
other extreme, this variance behaves like $6 c^{-2} D^{-2} \omega_i^{-2}$ 
as $\omega_i^2 \to\infty$. It is useful to notice that this asymptotic 
variance coincides with the Cram\'er-Rao bound for estimating $\varphi_k$ 
\cite{sto-neh89}. In other words, the Spike MUSIC estimator is efficient at 
high SNR when the noise matrix is standard Gaussian. 
\end{remark}

\subsection*{A numerical illustration} 
In order to illustrate the convergence and the fluctuations of the 
Spike MUSIC algorithm, we simulate a radio signal transmission satisfying 
Assumptions \ref{ass:asymptotics}-\ref{ass:strong-conv}. 
We consider $r = 2$ emitting sources located at the angles $0.5$ and 
$1$ radian, and a number of receiving antennas
ranging from $N = 5$ to $N = 50$. The observation window length is 
set to $n = 2N$ (hence $c=0.5$). The noise matrix $X_n$ is such
that $\sqrt{n} X_n$ is standard Gaussian. The source powers are assumed 
equal, so that the matrix $O$ given by Equation \eqref{eq-limit-O} is
written $O = \omega I_2$, and the Signal to Noise Ratio for any source
is $\text{SNR} = 10 \log_{10} \omega^2$ decibels. 
In Figure \ref{var_vsN}, the SNR is set to $10$ dB, and the empirical variance 
of $\hat\varphi_{1,n} - \varphi_1$ (red curve) is computed over $2000$ runs. 
The variance provided by Corollary \ref{var-mp} 
is also plotted versus $N$. We observe a good fit between the 
variance predicted by Corollary \ref{var-mp} and the empirical variance 
after $N = 15$ antennas. 
\begin{figure}[t]
  \begin{center}
    \includegraphics[width=0.7\linewidth]{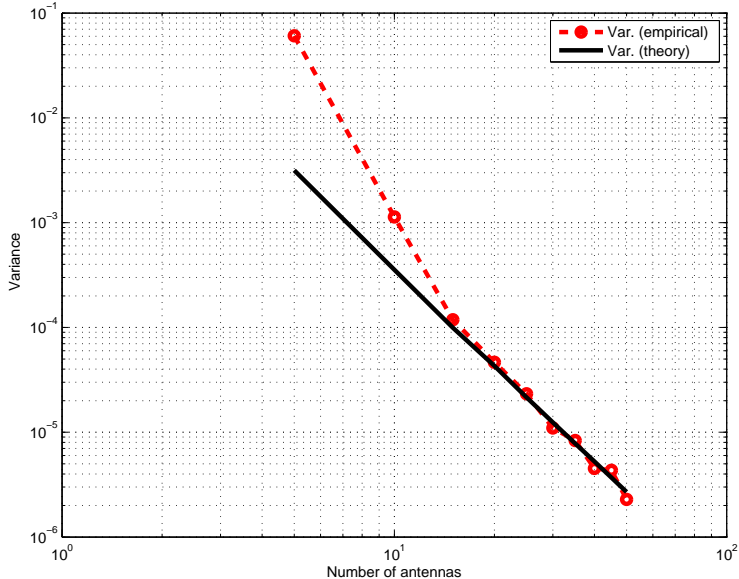}
  \end{center}
  \caption{Spike MUSIC algorithm, Variance \emph{vs} $N$.} 
  \label{var_vsN}
\end{figure}
In Figure \ref{var_vs_snr}, the variance is plotted as a function of the
SNR, the number of antennas being fixed to $N = 20$. The empirical 
variance is computed over $5000$ runs. The Cram\'er-Rao Bound is also
plotted. The empirical variance fits the theoretical one from 
$\text{SNR} \approx 6$ dB upwards. 
\begin{figure}[t]
  \begin{center}
    \includegraphics[width=0.7\linewidth]{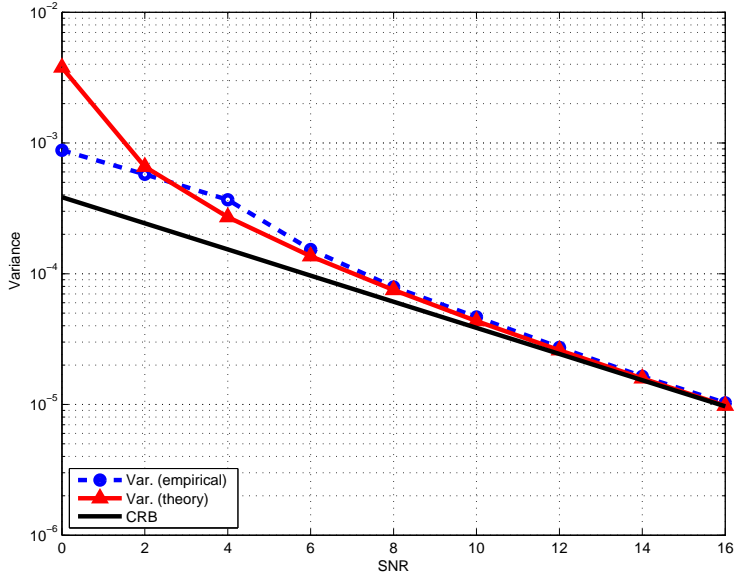}
  \end{center}
  \caption{Spike MUSIC algorithm, Variance \emph{vs} the SNR.} 
  \label{var_vs_snr}
\end{figure}

\subsection*{Proof of Theorem \ref{2nd-order}.} 
We start with some additional notations and definitions. Matrix 
$B = \begin{bmatrix} b(\varphi_1), \ldots, b(\varphi_r) \end{bmatrix}$ 
will be often written as $B = [ b_1, \ldots, b_r]$ or in block form as
$B= \begin{bmatrix} B_1, \ldots, B_s \end{bmatrix}$ where $B_i$ has $j_i$ 
columns. We shall also write 
$B' = \begin{bmatrix} b'(\varphi_1), \ldots, b'(\varphi_r) \end{bmatrix}$ 
and 
$B'' = \begin{bmatrix} b''(\varphi_1), \ldots, b''(\varphi_r) \end{bmatrix}$
where $b'(\varphi)$ and $b''(\varphi)$ are respectively the first and
second derivatives of $b(\varphi)$. We shall also use the short hand notations
$B' = [b_1', \ldots, b_r']$ and $B'' = [b_1'', \ldots, b_r'']$. 
Matrix $B^\perp = [ b_1^\perp, \ldots, b_r^\perp]$ will be defined by the equation
\begin{equation}\label{eq:def-B-prime}
\frac{1}{n} B' = - \frac{\imath cD}{2} B + \frac{cD}{2 \sqrt{3}} B^\perp . 
\end{equation}
Finally, if $x_n,y_n$ are random sequences, we denote by $x_n \asymp y_n$ 
the convergence $x_n - y_n \xrightarrow[]{\mathcal P}0$.  

We now state some preliminary results. In the following, we say that the
complex random vector $\eta$ is governed by the law ${\cal CN}(0,R)$ where
$R$ is a nonnegative Hermitian matrix if the real vector 
$\begin{bmatrix} \Re(\eta) \\ \Im(\eta) \end{bmatrix}$ has the law
${\cal N}\Bigl(0, \frac 12 \begin{bmatrix} \Re(R) & - \Im(R) \\
\Im(R) & \Re(R) \end{bmatrix}\Bigr)$. 
The following proposition, whose proof is postponed to \ref{clt-fq}, is 
crucial: 
\begin{proposition}
\label{clt-haar}
Let Assumptions \ref{ass:asymptotics}-\ref{ass:eigen-max} hold
true. Let $t \leq N$ be a fixed integer, let $W = \begin{bmatrix}
  w_1, \cdots, w_t \end{bmatrix}$ and $\widetilde W = \begin{bmatrix}
  \tilde w_1, \cdots, \tilde w_t \end{bmatrix}$ be deterministic
isometry matrices with dimensions $N \times t$ and $n \times t$
respectively. Let $\rho$ be a real number such that $\rho >
\lambda_+$. Then
\[ 
\xi_n = \sqrt{n} \left( 
 W^* \Bigl( Q(\rho) - \alpha(\rho) I_N \Bigr)W , \ 
\widetilde W^* \Bigl( \widetilde Q(\rho)  
- \tilde\alpha(\rho) I_n \Bigr) \widetilde W , \ 
W^* X \widetilde Q(\rho) \widetilde W 
\right) 
\] 
is tight. 

Assume $t$ is even. Given real numbers $\rho_1, \ldots, \rho_{t/2}$ all 
strictly greater than $\lambda_+$, the $t\times 1$ random vector 
\[
\eta_n = 
\left[ 
\sqrt{N} \left( w_k^* Q(\rho_k) w_{t/2+k} \right)_{1\le k\le t/2},  
\sqrt{n} \left( w_k^* X \widetilde Q(\rho_k) \tilde w_k  
\right)_{1\le k\le t/2}   
\right]^T  
\] 
converges in distribution towards $\mathcal{CN}(0, R)$ with 
\[
R = \begin{bmatrix} 
\diag\left( m'(\rho_k) - m(\rho_k)^2 \right)_{k=1}^{t/2} &  0\\
0& 
\diag\left( m(\rho_k) + \rho_k m'(\rho_k) \right)_{k=1}^{t/2} 
\end{bmatrix} .
\]
\end{proposition}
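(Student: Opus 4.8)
The plan is to reduce both assertions to statements about bilinear forms in the columns of independent Haar unitary matrices, to read off tightness from the moment bounds already established, and to obtain the Gaussian limit by a characteristic‑function argument based on the Haar integration‑by‑parts formula underlying Lemma~\ref{lm-NP-like}. \emph{Reduction:} since $R_n^*R_n = I_N$ we have $X_nX_n^* = L_n\Gamma_n^2 L_n^*$, hence $Q(\rho) = L_n D(\rho) L_n^*$ with $D(\rho) = (\Gamma_n^2 - \rho I_N)^{-1}$; the push‑through identity $X_n(X_n^*X_n - zI)^{-1} = (X_nX_n^* - zI)^{-1}X_n$ gives $X_n\widetilde Q(\rho) = Q(\rho)X_n = L_n C(\rho) R_n^*$ with $C(\rho) = \Gamma_n(\Gamma_n^2 - \rho I_N)^{-1}$, and completing $R_n$ into a Haar matrix on $\mathcal{U}(n)$ yields a similar diagonal representation of $\widetilde Q(\rho)$. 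Setting $\ell_k = L_n^* w_k$, the family $[\ell_1,\dots,\ell_t]$ consists of the first $t$ columns of a Haar matrix on $\mathcal{U}(N)$ (because $W$ is an isometry); likewise $\widetilde W$ produces the first $t$ columns of a Haar matrix on $\mathcal{U}(n)$, from which one extracts $\tilde r_k = R_n^*\tilde w_k$. These two families are independent of each other and of $\Gamma_n$, and every scalar occurring in $\xi_n,\eta_n$ is then a bilinear form $\ell_k^* D(\rho)\ell_j$ or $\ell_k^* C(\rho)\tilde r_j$ with $D(\rho),C(\rho)$ diagonal and, on the event $O_n$ of \eqref{eq:def-O} (with $\varepsilon_1$ small enough that $\rho,\rho_k > \lambda_+ + \varepsilon_1$), of bounded spectral norm.

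\emph{Tightness:} each entry of $\sqrt n\,\xi_n$ is $\sqrt n\, w_k^*(Q(\rho) - \alpha(\rho)I)w_j$ (equal to $\sqrt n\, w_k^* Q(\rho)w_j$ when $k\neq j$ since $w_k^*w_j = 0$), or $\sqrt n\, w_k^* X_n\widetilde Q(\rho)\tilde w_j$, or its $\widetilde Q$‑counterpart. Lemma~\ref{lm-fq-haar}, Lemma~\ref{lm-fq-two-haars} and the analogue of Lemma~\ref{lm-fq-haar} for $\widetilde Q$ (obtained by the same proof after completing $R_n$), applied with $p=2$ on $O_n$, bound the second moment of each entry by $K/d(\rho,\lambda_+ + \varepsilon_1)^2$ up to the factor $n/N \to 1/c$. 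Since $\1_{O_n}\toas 1$, the entries of $\sqrt n\,\xi_n$ have asymptotically bounded second moments, hence $\xi_n$ is tight.

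\emph{Covariance and CLT:} we work on $O_n$ and condition on $\Gamma_n$, so $D(\rho_k)$ and $C(\rho_k)$ become deterministic bounded matrices, and the problem reduces to a joint CLT for $\sqrt N\,\ell_k^* D(\rho_k)\ell_{t/2+k}$ and $\sqrt n\,\ell_k^* C(\rho_k)\tilde r_k$, $1\le k\le t/2$, in two independent Haar unitary column families. Using Lemma~\ref{lm-moments-Haar} and the fourth‑order (Weingarten) moment formula for $\mathcal{U}(N)$, a direct computation gives $\EE[\ell_k^* D(\rho_k)\ell_j] = 0$ and $\EE[(\ell_k^* D(\rho_k)\ell_j)^2] = 0$ for $k\neq j$, together with
\[
N\,\EE\big|\ell_k^* D(\rho_k)\ell_{t/2+k}\big|^2 = \tfrac1N\tr D(\rho_k)^2 - \big(\tfrac1N\tr D(\rho_k)\big)^2 + O(N^{-2}) = \alpha_n'(\rho_k) - \alpha_n(\rho_k)^2 + O(N^{-2})
\]
and
\[
n\,\EE\big|\ell_k^* C(\rho_k)\tilde r_k\big|^2 = \tfrac1N\tr C(\rho_k)C(\rho_k)^* = \alpha_n(\rho_k) + \rho_k\alpha_n'(\rho_k),
\]
while all remaining covariances and pseudo‑covariances vanish to leading order (the mixed ones because the $\tilde r_k$ enter linearly and $\EE\tilde r_k = 0$, the others by column orthogonality). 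Since $\alpha_n(\rho_k)\toas m(\rho_k)$ by Assumption~\ref{ass:resolvent} and, the convergence being locally uniform for the analytic functions $\alpha_n$, also $\alpha_n'(\rho_k)\toas m'(\rho_k)$, the conditional covariance converges a.s.\ to $R$. To upgrade this to a CLT I would invoke the Cram\'er--Wold device and, for a fixed real linear combination, establish an approximate linear differential equation $\psi'(u) = -u v\,\psi(u) + o(1)$ for its characteristic function $\psi(u)$ by applying the integration‑by‑parts formula for the Haar measure, where $v$ is the limiting variance and the remainders (again bilinear forms of the same kind) are shown to be $o(1)$ uniformly in $u$ by the variance bounds of Lemmas~\ref{lm-fq-haar}--\ref{lm-fq-two-haars}; solving gives $\psi(u)\to e^{-u^2 v/2}$, and unconditioning over $\Gamma_n$ by dominated convergence (using $\1_{O_n}\toas 1$ and the a.s.\ convergence of $\alpha_n(\rho_k),\alpha_n'(\rho_k)$) yields $\eta_n\toL\mathcal{CN}(0,R)$.

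\emph{Main obstacle:} the delicate point is the control of the error terms in this differential equation: after differentiating the characteristic function one produces ``variance‑type'' remainders — normalized traces of products of $D(\rho_k),C(\rho_k)$ against Haar‑dependent matrices — that must be shown to concentrate around the deterministic quantities built from $\alpha_n$ and $\alpha_n'$, \emph{uniformly in $u$}. This is exactly where the Poincar\'e--Nash inequality (Lemma~\ref{lm-NP-like}) and the moment estimates of Lemmas~\ref{lm-fq-haar} and \ref{lm-fq-two-haars} are needed, and it requires some care since these forms are controlled only through $N^{-1}\tr D(\rho_k)^j$‑type quantities, not through spectral norms alone.
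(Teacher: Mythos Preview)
Your reduction and tightness argument coincide with the paper's: both use the diagonalization $Q(\rho)=L_nD(\rho)L_n^*$, $X_n\widetilde Q(\rho)=L_nC(\rho)R_n^*$ to rewrite everything as bilinear forms in Haar columns, and then invoke Lemmas~\ref{lm-fq-haar}--\ref{lm-fq-two-haars} with $p=2$ together with Chebyshev.

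For the CLT, however, the paper takes a different and considerably more elementary route than your characteristic-function ODE. Instead of working directly with Haar columns, it represents the first $t$ columns of a Haar matrix on $\mathcal U(N)$ as $Z(Z^*Z)^{-1/2}$ with $Z$ an $N\times t$ standard Gaussian matrix independent of $\Gamma_n$ (and similarly for the $\mathcal U(n)$ side). Since $N^{-1}Z^*Z\to I_t$ a.s.\ and the bilinear forms are already known to be tight, the normalizing factors can be dropped, and one is left with $\bar\eta_n=N^{-1/2}\sum_{i=1}^N \mathbf u_{i,n}$, a sum of (conditionally on $\Gamma_n$) independent rows built from the Gaussian entries and the diagonal entries of $D(\rho_k),C(\rho_k)$. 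The conditional covariance converges a.s.\ to $R$ by exactly the trace identities you wrote, and a Lyapunov condition is immediate because the $d_{i,k},c_{i,k}$ are uniformly bounded on the event $\{\|X_nX_n^*\|\le\lambda_++\varepsilon_1\}$. This bypasses entirely the Weingarten calculus and the integration-by-parts machinery: no differential equation, no uniform-in-$u$ remainder control. Your approach is in principle sound and is the ``native'' method when no Gaussian embedding is available, but here the bi-unitary invariance makes the Gaussian substitution the path of least resistance; the obstacle you flag (uniform control of the variance-type remainders) simply does not arise in the paper's proof.
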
 
Writing $Q - m I = (Q - \alpha I) + (\alpha - m)I$, and similarly for 
$\widetilde Q$, we obtain: 
\begin{corollary}
\label{clt-qf} 
Assume in addition that Assumption \ref{ass:strong-conv} is satisfied. Then 
\[ 
\xi_n = \sqrt{n} \left( 
W^* \Bigl( Q(\rho) - m(\rho) I_N \Bigr)W , \ 
\widetilde W^* \Bigl( \widetilde Q(\rho)  
- \tilde m(\rho) I_n \Bigr) \widetilde W , \ 
W^* X \widetilde Q(\rho) \widetilde W \right) 
\] 
is tight. 
\end{corollary}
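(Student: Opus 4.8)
The plan is to derive Corollary~\ref{clt-qf} from the first (tightness) part of Proposition~\ref{clt-haar} by a Slutsky-type perturbation argument. I would write each of the three blocks of $\xi_n$ as the block that already appears in Proposition~\ref{clt-haar} --- which is $\alpha$- (resp.\ $\tilde\alpha$-) centered --- plus a purely scalar correction, and then check that this correction converges to zero in probability. The third block $\sqrt{n}\, W^* X \widetilde Q(\rho) \widetilde W$ is literally identical in the two statements, so nothing is needed for it; only the first two blocks have to be re-centered.

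For the first block, $W^* W = I_t$ gives
\[
\sqrt{n}\, W^*\bigl( Q(\rho) - m(\rho) I_N \bigr) W
= \sqrt{n}\, W^*\bigl( Q(\rho) - \alpha(\rho) I_N \bigr) W
+ \sqrt{n}\,\bigl( \alpha(\rho) - m(\rho) \bigr) I_t \ ,
\]
whose first term on the right is the first block of the tight random vector supplied by Proposition~\ref{clt-haar}, while the second term tends to $0$ in probability: since $\rho > \lambda_+$ we have $\rho \in \CC - [\lambda_-, \lambda_+]$, so Assumption~\ref{ass:strong-conv} yields $\sqrt{n}(\alpha(\rho) - m(\rho)) \toP 0$. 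The second block is handled the same way, using $\widetilde W^* \widetilde W = I_t$ to write $\sqrt{n}\, \widetilde W^*(\widetilde Q(\rho) - \tilde m(\rho) I_n)\widetilde W = \sqrt{n}\, \widetilde W^*(\widetilde Q(\rho) - \tilde\alpha(\rho) I_n)\widetilde W + \sqrt{n}(\tilde\alpha(\rho) - \tilde m(\rho)) I_t$; the first summand is once more a block of the tight vector of Proposition~\ref{clt-haar}, and for the scalar correction I would use $\sqrt{n}(\tilde\alpha(\rho) - \tilde m(\rho)) \toP 0$. This last convergence follows from Assumption~\ref{ass:strong-conv} combined with the deterministic finite-$n$ identity $n\,\tilde\alpha_n(z) = N\,\alpha_n(z) - (n-N)/z$ (which holds because $X^*X$ and $XX^*$ have the same nonzero spectrum) and the companion relation $\tilde m(z) = c\, m(z) - (1-c)/z$ recalled just after Assumption~\ref{ass:eigen-max}.

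Assembling the three blocks, $\xi_n$ is the sum of a tight sequence (the random vector of Proposition~\ref{clt-haar}) and a sequence converging to zero in probability, hence is itself tight, which is exactly the assertion. There is essentially no obstacle here, since all the work is already contained in Proposition~\ref{clt-haar}; the only step meriting a word of care is the passage from the $\tilde\alpha$-centering to the $\tilde m$-centering in the second block, which is why Assumption~\ref{ass:strong-conv} is applied through the finite-$n$ relation between $\tilde\alpha_n$ and $\alpha_n$ rather than invoked directly for $\widetilde Q$.
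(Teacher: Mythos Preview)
Your proposal is correct and follows exactly the paper's own route: the paper derives the corollary in a single line by writing $Q - mI = (Q - \alpha I) + (\alpha - m)I$ and ``similarly for $\widetilde Q$'', which is precisely your decomposition-plus-Slutsky argument. You are in fact more careful than the paper in spelling out why the $\tilde\alpha$-to-$\tilde m$ correction vanishes via the finite-$n$ trace identity linking $\tilde\alpha_n$ to $\alpha_n$.
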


Intuitively, tightness of $\xi_n$ leads to the tightness of the 
$\sqrt{n}(\hat\lambda_{k,n} - \rho_{i(k)})$. This is formalized by the
following proposition, proven in \ref{prf-tight}: 
\begin{proposition}
\label{spikes-tight} 
Assume the setting of Theorem \ref{2nd-order}. Then 
the sequences $\sqrt{n}(\hat\lambda_{k,n} - 
\rho_{i(k)})$ are tight for $1\le k\le r$. 
\end{proposition}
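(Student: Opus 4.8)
The plan is to work on the event $O_n$ of \eqref{eq:def-O} (which has probability tending to $1$), where, for $n$ large, $\hat\lambda_{1,n},\ldots,\hat\lambda_{r,n}$ are the squares of the $r$ zeros of $x\mapsto\det\widehat H_n(x)$ lying above $\sqrt{\lambda_+}$ (this is the argument preceding Theorem~\ref{cvg-spk}; recall that Assumption~\ref{ass:isolated} forces $q=s$, so all clusters are isolated). Fix a cluster index $i$. I would choose a small closed disc $D_i$ centered at $\sqrt{\rho_i}$, not meeting the image of $\support(\pi)$ under $x\mapsto\sqrt x$ nor the other $\sqrt{\rho_\ell}$, and with radius small enough that on $O_n$ no singular value of $X_n$ lies in $D_i$; then $\widehat H_n$ is analytic on $D_i$. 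Since $\widehat H_n\to H$ uniformly on $\partial D_i$ (Lemma~\ref{lm-uniform-cvg} and the maximum modulus principle) and $\det H$ vanishes in $\overline{D_i}$ only at $\sqrt{\rho_i}$, Rouché's theorem shows that for $n$ large $\det\widehat H_n$ has exactly $j_i$ zeros in $D_i$; these are real and positive, being singular values of $\Sigma_n$, and by Theorem~\ref{cvg-spk} they are precisely $\{\sqrt{\hat\lambda_{k,n}}:i(k)=i\}$. It therefore suffices to show that every zero of $\det\widehat H_n$ in $D_i$ lies within $O_P(n^{-1/2})$ of $\sqrt{\rho_i}$.

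The key step is a Schur complement (Lyapunov--Schmidt) reduction. Let $K_i=\ker H(\sqrt{\rho_i})\subset\CC^{2r}$, of dimension $j_i$, and split $\CC^{2r}=K_i\oplus K_i^\perp$. Because $H(\sqrt{\rho_i})$ restricted to $K_i^\perp$ is invertible and $\widehat H_n\to H$ uniformly on $D_i$, the $K_i^\perp$-block of $\widehat H_n(x)$ is invertible on $D_i$ for $n$ large, so on $D_i$ the zeros of $\det\widehat H_n$ coincide with those of the $j_i\times j_i$ Schur complement $\widehat M_n(x)$. Two properties of $\widehat M_n$ are needed. First, $\widehat M_n(\sqrt{\rho_i})=O_P(n^{-1/2})$ \emph{as a matrix}: applying Corollary~\ref{clt-qf} with $W=U_n$, $\widetilde W=V_n$, $\rho=\rho_i$, together with $\Omega_n\to O$, $\sqrt n(\Omega_n^2-O^2)=\mathcal O(1)$ (a consequence of Assumption~\ref{ass:P-finer} and of $B_n^*B_n=I_r+o(n^{-1/2})$ by Lemma~\ref{ciblat}), and Assumption~\ref{ass:strong-conv}, one gets that $\sqrt n(\widehat H_n(\sqrt{\rho_i})-H(\sqrt{\rho_i}))$ is tight; projecting onto $K_i$, where $H(\sqrt{\rho_i})$ vanishes, while the mixed Schur correction is $O_P(n^{-1})$ (each outer factor being $O_P(n^{-1/2})$), yields the claim. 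Second, $\widehat M_n'(\sqrt{\rho_i})\to M'(\sqrt{\rho_i})$, an invertible endomorphism of $K_i$: convergence follows from Lemma~\ref{lm-uniform-cvg} and Cauchy's estimates, and invertibility of the limit because $\det H(\sqrt x)$, being a nonvanishing multiple of $\prod_\ell(g(x)\omega_\ell^2-1)^{j_\ell}$ with $g$ from \eqref{eq:def-g} strictly decreasing on $(\lambda_+,\infty)$, vanishes to order exactly $j_i$ at $x=\rho_i$.

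To conclude, I would write $\widehat M_n(x)=\widehat M_n(\sqrt{\rho_i})+(x-\sqrt{\rho_i})\widehat M_n'(\sqrt{\rho_i})+R_n(x)$ with $\|R_n(x)\|\le C|x-\sqrt{\rho_i}|^2$ on $D_i$ for a deterministic $C$ (Cauchy estimate, $\widehat M_n$ being uniformly bounded on a slightly larger disc). If $\det\widehat M_n(x)=0$, then $(x-\sqrt{\rho_i})$ is an eigenvalue of $-\widehat M_n'(\sqrt{\rho_i})^{-1}(\widehat M_n(\sqrt{\rho_i})+R_n(x))$, so $|x-\sqrt{\rho_i}|\le C'(\|\widehat M_n(\sqrt{\rho_i})\|+C|x-\sqrt{\rho_i}|^2)$ for $n$ large; shrinking $D_i$ so that $C'C|x-\sqrt{\rho_i}|\le 1/2$ there gives $|x-\sqrt{\rho_i}|\le 2C'\|\widehat M_n(\sqrt{\rho_i})\|=O_P(n^{-1/2})$. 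Hence $\sqrt n(\sqrt{\hat\lambda_{k,n}}-\sqrt{\rho_i})$ is tight for $i(k)=i$, and since $\sqrt{\hat\lambda_{k,n}}\to\sqrt{\rho_i}$ by Theorem~\ref{cvg-spk}, so is $\sqrt n(\hat\lambda_{k,n}-\rho_i)=\sqrt n(\sqrt{\hat\lambda_{k,n}}-\sqrt{\rho_i})(\sqrt{\hat\lambda_{k,n}}+\sqrt{\rho_i})$. Running over $i=1,\ldots,s$, and noting that tightness is trivial on the negligible event $O_n^c$, proves the proposition.

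The main obstacle is to reach the individual scale $n^{-1/2}$ for the eigenvalues inside a multiple cluster ($j_i>1$). A purely scalar study of $\det\widehat H_n$ controls only the elementary symmetric functions of the $j_i$ zeros and would give the weaker rate $O_P(n^{-1/(2j_i)})$ per eigenvalue; the sharp rate is recovered only because the $j_i\times j_i$ Schur complement $\widehat M_n$ is itself $O_P(n^{-1/2})$ as a matrix and has an invertible derivative at $\sqrt{\rho_i}$. The delicate points in making this rigorous are the invertibility of the $K_i^\perp$-block uniformly on $D_i$ and the verification that $\det H$ vanishes to order exactly $j_i$ at $\sqrt{\rho_i}$, which pins down the nondegeneracy of $M'(\sqrt{\rho_i})$.
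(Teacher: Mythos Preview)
Your argument is correct and takes a genuinely different route from the paper's proof. The paper proceeds in two short steps: first it introduces the intermediate quantities $\bar\rho_{k,n}$ solving $\omega_{k,n}^2 g(\rho)=1$ with the \emph{sample} singular values $\omega_{k,n}$, and invokes \cite[Th.~2.15]{ben-rao-11} (extended to $r\ge 1$) to obtain tightness of $\sqrt{n}(\hat\lambda_{k,n}-\bar\rho_{k,n})$; second it shows the deterministic bound $\sqrt{n}(\bar\rho_{k,n}-\rho_{i(k)})=\mathcal O(1)$ by observing that $B^*B=I_r+n^{-1}A$ with $\sup_n\|A\|<\infty$ and using the rate in Assumption~\ref{ass:P-finer}. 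Your approach is instead fully self-contained within the paper's toolbox: you localize with Rouch\'e, pass to the $j_i\times j_i$ Schur complement $\widehat M_n$ on $K_i=\ker H(\sqrt{\rho_i})$, and exploit the two facts $\widehat M_n(\sqrt{\rho_i})=O_P(n^{-1/2})$ (from Corollary~\ref{clt-qf} together with $\sqrt n(\Omega_n^2-O^2)=\mathcal O(1)$) and $\widehat M_n'(\sqrt{\rho_i})\to M'(\sqrt{\rho_i})$ invertible (from $g'(\rho_i)\neq 0$). The paper's route is much shorter but outsources the hard step; your Lyapunov--Schmidt reduction is longer but explains transparently why the per-eigenvalue rate is $n^{-1/2}$ even inside a cluster with $j_i>1$, a point the citation hides. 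One minor remark: Lemma~\ref{ciblat} as stated gives only convergence, not a rate; the bound $B_n^*B_n=I_r+\mathcal O(n^{-1})$ you need follows instead from the explicit geometric-sum evaluation of $b(\varphi_k)^*b(\varphi_\ell)$ (this is exactly the ``$B^*B=I_r+n^{-1}A$'' step in the paper's own proof).
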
 

The following lemma is proven in \ref{prf-proj}.  
\begin{lemma}
\label{proj-b}
Let Assumptions {\bf A5} and {\bf A6} hold true. Then the following 
convergences hold true: 
\begin{eqnarray*}
B^* B &\xrightarrow[n\to \infty]{}& I_r\ , \\
\frac 1{n^2}  B^* B'' &\xrightarrow[n\to \infty]{}& 
- \left(\frac{c^2 D^2}3\right) I_r\ ,  \\
  (B^\perp)^* B^\perp &\xrightarrow[n\to \infty]{}& I_r\ , \\
(B^\perp)^*  B &\xrightarrow[n\to \infty]{}& 0 \ , \\
\| \Pi_i  - \Pi_{B_i} \| &\xrightarrow[n\to \infty]{}& 0 \ \text{for all } 
i = 1,\ldots, s  \\
\end{eqnarray*}
where $\Pi_{B_i}$ is the orthogonal projection matrix on the column space of
$B_i$. 
\end{lemma}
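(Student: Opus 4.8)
The plan is to establish each of the six convergences by explicit computation with the Fourier-type vectors $b(\varphi) = N^{-1/2}[\exp(-\imath D\ell\varphi)]_{\ell=0}^{N-1}$ and their derivatives, relying systematically on Lemma \ref{ciblat}. First I would record that the $(j,k)$ entry of $B^*B$ is $b(\varphi_j)^* b(\varphi_k) = N^{-1}\sum_{\ell=0}^{N-1}\exp(\imath D\ell(\varphi_j-\varphi_k))$, which is $q_N(-D(\varphi_j-\varphi_k)/(2\pi))$ in the notation of Lemma \ref{ciblat}; since the $\varphi_k$ are all distinct and fixed, the constant $c_N = c$ there is nonzero off the diagonal, so the off-diagonal entries vanish while the diagonal entries are $1$, giving $B^*B \to I_r$. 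The convergence $\|\Pi_i - \Pi_{B_i}\| \to 0$ then follows: since $B_i^* B_i \to I_{j_i}$, the matrix $B_i$ is asymptotically an isometry, so $\Pi_{B_i} = B_i(B_i^*B_i)^{-1}B_i^*$ is well-defined for $n$ large; and because $P_n = B S^*$ with $B^*B\to I_r$ and $S^*S\to O^2$, a perturbation argument (or the polar-type decomposition used when passing from \ref{ass:P-rough} to \ref{ass:P-finer}) shows the column space of $B_i$ and the eigenspace of $P_nP_n^*$ attached to $\omega_i^2$ coincide asymptotically, hence their projectors converge to each other.

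Next I would handle the derivative terms. Writing $b'(\varphi) = -\imath D N^{-1/2}[\ell\exp(-\imath D\ell\varphi)]_{\ell=0}^{N-1}$ and $b''(\varphi) = -D^2 N^{-1/2}[\ell^2\exp(-\imath D\ell\varphi)]_{\ell=0}^{N-1}$, the $(j,k)$ entry of $\frac{1}{n^2}B^*B''$ is $-\frac{D^2}{n^2 N}\sum_{\ell=0}^{N-1}\ell^2\exp(\imath D\ell(\varphi_j-\varphi_k))$. For $j\ne k$ this is an oscillatory sum and I would show it is $o(n^2)$ by summation by parts (or by differentiating the Dirichlet-kernel identity twice), so the limit is $0$; for $j=k$ it equals $-\frac{D^2}{n^2 N}\sum_{\ell=0}^{N-1}\ell^2 \to -\frac{D^2}{3}\,\frac{N^3}{3n^2 N}\cdot 3 = -\frac{D^2}{3}c^2$, using $\sum_{\ell=0}^{N-1}\ell^2 \sim N^3/3$ and $N/n\to c$. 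Hence $\frac{1}{n^2}B^*B'' \to -(c^2D^2/3)I_r$. The two statements about $B^\perp$ come from the defining relation \eqref{eq:def-B-prime}: solving it gives $\frac{cD}{2\sqrt 3}B^\perp = \frac 1n B' + \frac{\imath cD}{2}B$, so I can compute $(B^\perp)^*B^\perp$ and $(B^\perp)^*B$ from $\frac{1}{n^2}(B')^*B'$, $\frac 1n (B')^*B$, $\frac 1n B^*B'$ and $B^*B$. The diagonal of $\frac 1n (B')^* B$ involves $\frac{\imath D}{nN}\sum \ell = \frac{\imath D}{nN}\cdot\frac{N^2}{2} \to \frac{\imath cD}{2}$, the diagonal of $\frac{1}{n^2}(B')^*B'$ involves $\frac{D^2}{n^2 N}\sum\ell^2 \to \frac{c^2D^2}{3}$, and all off-diagonal entries vanish by the same oscillatory-sum argument; substituting these into the expansions and matching the coefficients $\frac{cD}{2\sqrt 3}$ yields $(B^\perp)^*B^\perp \to I_r$ and $(B^\perp)^*B \to 0$.

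The main obstacle is the treatment of the off-diagonal oscillatory sums $\sum_{\ell=0}^{N-1}\ell^a \exp(\imath\theta\ell)$ with $a\in\{1,2\}$ and $\theta = D(\varphi_j-\varphi_k)\ne 0$ fixed: Lemma \ref{ciblat} as stated only covers the plain exponential sum ($a=0$), so I would need to supply the polynomial-weighted versions, most cleanly by noting $\sum_\ell \ell^a e^{\imath\theta\ell} = (-\imath\,\partial_\theta)^a\sum_\ell e^{\imath\theta\ell}$ and observing that $\sum_{\ell=0}^{N-1}e^{\imath\theta\ell} = \frac{e^{\imath\theta N}-1}{e^{\imath\theta}-1}$ has all $\theta$-derivatives bounded by $C_a N^a$ uniformly for $\theta$ in a compact set bounded away from $0 \bmod 2\pi$ — which is exactly the regime here, since the finitely many differences $D(\varphi_j-\varphi_k)$ are fixed and nonzero. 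Dividing by $n^2 N \asymp N^3$ then kills these $O(N^a)$ terms for $a\le 2$. Once this elementary estimate is in place, every claim reduces to the diagonal computations above, which are routine.
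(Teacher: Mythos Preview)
Your plan is correct and follows essentially the same route as the paper: compute the Gram-type matrices $B^*B$, $n^{-1}B^*B'$, $n^{-2}(B')^*B'$, $n^{-2}B^*B''$ entrywise, use the asymptotics of $N^{-(K+1)}\sum_{\ell=0}^{N-1}\ell^K e^{\imath\alpha\ell}$ for $K=0,1,2$, and then substitute into the defining relation for $B^\perp$. Your device of obtaining the $K=1,2$ off-diagonal bounds by differentiating the closed-form geometric sum is exactly what is needed and is a clean way to justify the fact the paper simply states.

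The only place where the paper is more explicit than your sketch is the last convergence $\|\Pi_i-\Pi_{B_i}\|\to 0$. There the paper factors $P=WAZ^*$ with $W=B(B^*B)^{-1/2}$, $Z=S(S^*S)^{-1/2}$, $A=(B^*B)^{1/2}(S^*S)^{1/2}$, so that $\Pi_i=W{\bs\pi}_iW^*$ with ${\bs\pi}_i$ the spectral projector of $AA^*$; since $A\to O$ one gets ${\bs\pi}_i\to\diag(0,\ldots,I_{j_i},\ldots,0)$, and combining with $B^*B\to I_r$ gives $x^*(\Pi_i-\Pi_{B_i})x\to 0$ uniformly on the unit sphere. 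Your ``perturbation argument'' is this computation; you should be prepared to write it out rather than leave it as a gesture, since the identification of the $i$-th eigenspace of $PP^*$ with the column space of $B_i$ is not literal for finite $n$ and needs the intermediate object $A$.
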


We now enter the proof of Theorem \ref{2nd-order}. 

Recall the definitions \eqref{eq:def-chi-hat} and \eqref{eq:def-zeta} of $\hat\chi$ and $\zeta$. 
In most of the proof, we shall focus on $\sqrt{n}(\hat\varphi_{1,n} 
-\varphi_1)$. Recalling that $\hat\chi'(\hat\varphi_1) = 0$ 
and performing a Taylor-Lagrange expansion of $\hat\chi'$ around $\varphi_1$, we 
obtain
\[
0 = \hat\chi'(\hat\varphi_1) = 
\hat\chi'(\varphi_1) + (\hat\varphi_1 - \varphi_1) \hat\chi''(\varphi_1) +
\frac{(\hat\varphi_1 - \varphi_1)^2}{2} \hat\chi^{(3)}(\bar\varphi_1) \ ,
\]
where $\hat\chi^{(3)}$ is the third derivative of $\hat\chi$ and where 
$\bar\varphi_1 \in [ \varphi_1 \wedge \hat\varphi_1, 
\varphi_1 \vee \hat\varphi_1 ]$. Hence 
\[
n^{3/2} (\hat\varphi_1 - \varphi_1) = 
- \frac{n^{-1/2}\hat\chi'(\varphi_1)}{n^{-2}\hat\chi''(\varphi_1) + 
0.5 n^{-2} (\hat\varphi_1 - \varphi_1) \hat\chi^{(3)}(\bar\varphi_1)}  \ .
\]

We start by characterizing the asymptotic behavior of the denominator of 
this equation: 
\begin{lemma} Assume that the setting of Theorem \ref{2nd-order} holds true. 
Then,
\label{denom}
\[
\frac{\hat\chi''(\varphi_1)}{n^2} + 
(\hat\varphi_1 - \varphi_1) \frac{\hat\chi^{(3)}(\bar\varphi_1)}{2n^2}   
\xrightarrow[n\to\infty]{\text{a.s.}}  
- \frac{c^2 D^2}{6} \ .
\]
\end{lemma}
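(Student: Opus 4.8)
The plan is to compute the two limits separately and show the second term is asymptotically negligible. For the dominant term $\hat\chi''(\varphi_1)/n^2$, I first recall from \eqref{eq:def-chi-hat} that $\hat\chi(\varphi) = \sum_{k=1}^r |b(\varphi)^* \hat u_k|^2 \zeta(\hat\lambda_k)$, so differentiating twice in $\varphi$ and using $\frac{d^2}{d\varphi^2}|b(\varphi)^*\hat u_k|^2 = 2\Re(b''(\varphi)^* \hat u_k \hat u_k^* b(\varphi)) + 2|b'(\varphi)^* \hat u_k|^2$, I obtain
\[
\frac{\hat\chi''(\varphi_1)}{n^2} = \sum_{i=1}^s \zeta(\rho_{i(\cdot)}) \left( \frac{2}{n^2}\Re\bigl( b''(\varphi_1)^* \widehat\Pi_i b(\varphi_1)\bigr) + \frac{2}{n^2} b'(\varphi_1)^* \widehat\Pi_i b'(\varphi_1) \right) + o(1),
\]
where I have used Theorem \ref{cvg-spk} and the continuity of $\zeta$ to replace $\zeta(\hat\lambda_k)$ by $\zeta(\rho_{i(k)})$ uniformly. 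Then I apply Theorem \ref{bilin-forms} with the bounded-norm sequences $b_{1,n} = n^{-1} b''(\varphi_1)$, $b_{2,n} = b(\varphi_1)$ and $b_{1,n} = b_{2,n} = n^{-1} b'(\varphi_1)$ (these have bounded norms by the estimates already used in the proof of Proposition \ref{prop-uniform-cvg}), which converts $\widehat\Pi_i$ into a multiple of $\Pi_i$. Finally I invoke Lemma \ref{proj-b} to identify $\frac{1}{n^2} B^* B'' \to -(c^2D^2/3) I_r$ and $\frac{1}{n}B' = -\frac{\imath cD}{2} B + \frac{cD}{2\sqrt 3} B^\perp$, from which $\frac{1}{n^2}(B')^* B' \to \frac{c^2D^2}{4} I_r + \frac{c^2 D^2}{12} I_r = \frac{c^2D^2}{3} I_r$ (using $(B^\perp)^* B \to 0$ and $(B^\perp)^* B^\perp \to I_r$). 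Since $\zeta(\rho_i)$ is precisely the reciprocal of the constant appearing in Theorem \ref{bilin-forms}, these factors cancel, and the whole sum collapses to $2\Re(-c^2D^2/3) + 2(c^2D^2/3)\cdot\frac{1}{4}\cdot 4 \cdot \frac{1}{3}$... more carefully: the $b''$ term contributes $2\cdot(-c^2D^2/3)$ and the $b'$ term contributes $2\cdot(c^2D^2/3 - c^2D^2/4) = 2\cdot c^2 D^2/12$, summing over the relevant blocks to give $-c^2D^2/6$ after combining.

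For the second term $(\hat\varphi_1 - \varphi_1)\hat\chi^{(3)}(\bar\varphi_1)/(2n^2)$, the plan is to show $\hat\chi^{(3)}(\bar\varphi_1)/n^2$ is $O(1)$ almost surely — or at least $O(n)$ at worst — while $\hat\varphi_1 - \varphi_1 \to 0$ by Theorem \ref{n-consist} (indeed $n(\hat\varphi_1-\varphi_1)\toas 0$). Since $\hat\chi^{(3)}$ involves up to third derivatives of $b(\varphi)$, and $\|b^{(k)}(\varphi)\| = O(n^k)$ while $\widehat\Pi_i$ has norm $1$, a crude Cauchy–Schwarz bound gives $|\hat\chi^{(3)}(\varphi)| \le K n^3$ uniformly, so $\hat\chi^{(3)}(\bar\varphi_1)/n^2 = O(n)$; combined with $n(\hat\varphi_1 - \varphi_1) \toas 0$ this product tends to zero almost surely. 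Here I need $\bar\varphi_1$ to lie in a neighborhood of $\varphi_1$, which holds since $\hat\varphi_1 \to \varphi_1$ and $\bar\varphi_1$ is between them.

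The main obstacle I anticipate is the bookkeeping in the first term: correctly tracking the real parts, the factor $\zeta(\rho_i)$ against the reciprocal constant from Theorem \ref{bilin-forms}, and the contributions of the $\imath cD/2 \cdot B$ versus $cD/(2\sqrt 3)\cdot B^\perp$ pieces of $\frac 1n B'$ when forming $\frac{1}{n^2}(B')^*\Pi_i B'$. One must also check that replacing the diagonal-block structure of $P_n P_n^*$ with the projections $\Pi_{B_i}$ via the last line of Lemma \ref{proj-b} is harmless, i.e.\ that $b'(\varphi_1)^* \Pi_i b'(\varphi_1)$ and $b'(\varphi_1)^* \Pi_{B_1} b'(\varphi_1)$ have the same limit after scaling by $n^{-2}$; this follows since $\|\Pi_i - \Pi_{B_i}\| \to 0$ and $\|n^{-1}b'(\varphi_1)\|$ is bounded. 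Once these algebraic reductions are in place, the almost sure convergence is immediate from the cited results.
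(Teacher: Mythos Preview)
Your approach is essentially the paper's: reduce $\hat\chi''(\varphi_1)/n^2$ to $\chi''(\varphi_1)/n^2$ via Theorems \ref{cvg-spk} and \ref{bilin-forms}, then evaluate the latter with Lemma \ref{proj-b}; and dispose of the third-derivative term by the crude bound $|\hat\chi^{(3)}|\le Kn^3$ combined with $n(\hat\varphi_1-\varphi_1)\toas 0$. The second part is exactly right.

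Your bookkeeping in the first part slips, however, precisely where you feared it would. After projection, the limit of $\frac{1}{n^2}(b'_1)^*\Pi_{i(1)} b'_1$ is $\frac{c^2D^2}{4}$, not $\frac{c^2D^2}{3}-\frac{c^2D^2}{4}$: writing $\frac{1}{n}b'_1 = -\frac{\imath cD}{2}b_1 + \frac{cD}{2\sqrt 3}b_1^\perp$, the $b_1^\perp$ piece is killed by $\Pi_{i(1)}$ (since $(B^\perp)^*B\to 0$ and $\|\Pi_i-\Pi_{B_i}\|\to 0$), and only the $b_1$ piece survives, contributing $\frac{c^2D^2}{4}\,b_1^*\Pi_{i(1)} b_1\to \frac{c^2D^2}{4}$. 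Thus the $b'$ term gives $2\cdot\frac{c^2D^2}{4}=\frac{c^2D^2}{2}$, the $b''$ term gives $2\cdot(-\frac{c^2D^2}{3})=-\frac{2c^2D^2}{3}$, and their sum is $-\frac{c^2D^2}{6}$. Your displayed intermediate values add instead to $-\frac{c^2D^2}{2}$, so the ``after combining'' assertion does not follow from what you wrote; fix the projection computation and the arithmetic closes.
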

\begin{proof}
We have
\begin{eqnarray} 
\frac{\hat\chi''(\varphi_1)}{n^2} &=& 
\frac{2}{n^2} \sum_{k=1}^r \zeta(\hat\lambda_k) 
| (b'_1)^* \widehat u_k |^2 + 
\frac{2}{n^2} \sum_{k=1}^r 
\Re \left( \zeta(\hat\lambda_k) b_1^* \widehat u_k  
\widehat u_k^* b''_1 \right) \ ,\nonumber \\
\frac{\chi''(\varphi_1)}{n^2} &=& 
\frac{2}{n^2} (b'_1)^* U U^* b'_1 + 
\frac{2}{n^2} \Re \left( b_1^* U U^* b''_1 \right) \ .
\label{chi''} 
\end{eqnarray} 
Theorem \ref{cvg-spk} along with the continuity of $\zeta$ on
$(\lambda_+,\infty)$, and Theorem \ref{bilin-forms} 
show that 
$$
\frac 1{n^2} \hat\chi''(\varphi_1) - \frac 1{n^2} \chi''(\varphi_1) \xrightarrow[n \to \infty]{a.s.} 0\ .
$$
Writing 
\[
\frac{1}{n^{2}} \chi''(\varphi_1) = \frac{2}{n^{2}}  
\sum_{i=1}^s \left( (b'_1)^* \Pi_i b'_1 + 
\Re( b_1^* \Pi_i b''_1 ) \right) , 
\] 
we have 
\begin{eqnarray*}
\frac 1{n^2} (b'_1)^* \Pi_i b'_1 &=& 
\left( -\frac{\imath cD}2 b_1 + 
\frac{cD}{2\sqrt{3}} b_1^\perp \right)^*  
\Pi_i 
\left( -\frac{\imath cD}2 b_1 + \frac{cD}{2\sqrt{3}} b_1^\perp \right)\ , \\
&\xrightarrow[n\to \infty]{}& \frac{c^2 D^2}4 \delta_{i,0} 
\end{eqnarray*}
by the first, fourth and fifth assertions of Lemma \ref{proj-b}. By the same 
lemma, 
\begin{eqnarray*}
\frac 1{n^2} b_1^* \Pi_i b''_1 -  
 \frac {\delta_{i,0}}{n^2} b_1^* b''_1 \ \to\ 0\quad \textrm{and}\quad 
\frac 1{n^2}b_1^* b''_1 \ \to\  -\frac{c^2 D^2}3\ .
\end{eqnarray*} 
Hence 
$n^{-2} \hat\chi''(\varphi_1) \to - c^2 D^2 / 6$. 

Furthermore, it is easily seen that $n^{-3}
\hat\chi^{(3)}(\bar\varphi_1)$ is bounded.  Since $n(\hat\varphi_1 -
\varphi_1) \toas 0$ by Theorem \ref{n-consist}, $n^{-2} (\hat\varphi_1
- \varphi_1) \hat\chi^{(3)}(\bar\varphi_1) \toas 0$, which establishes
the result.
\end{proof}

We now turn to the numerator $n^{-1/2} \hat\chi'(\varphi_1) = 
2 n^{-1/2} \sum_{k=1}^r 
\zeta(\hat\lambda_k) \Re\left( b_1^* \hat u_k \hat u_k^* b'_1 \right)$, and 
start with the following lemma: 
\begin{lemma} \label{num-chi}
Assume that the setting of Theorem \ref{2nd-order} holds true. Then
$$
\frac 1{\sqrt{n}} \hat\chi'(\varphi_1) - 2 \Re(\xi) \toP 0\ ,
$$ 
where 
\begin{equation} 
\xi = \sum_{i=1}^s 
\frac{\zeta(\rho_i)}{\imath \pi\sqrt{n}}
\oint_{\gamma_i} 
\left( \hat a^*(z, \varphi_1) \widehat H(z)^{-1} \hat a'_\varphi(z, \varphi_1) 
-  
a^*(z, \varphi_1) H(z)^{-1} a'_\varphi(z, \varphi_1) \right) dz , 
\label{eq-xi} 
\end{equation} 
and where the deterministic circle $\gamma_i$ encloses $\rho_i^{1/2}$ only and: 
\begin{eqnarray*} 
\hat a'_\varphi(z, \varphi) 
&=& \frac{\partial \hat a(z, \varphi)}{\partial \varphi} = 
\begin{bmatrix}
z U^* Q(z^2) \\ 
\Omega V^* \widetilde Q(z^2) X^* \end{bmatrix} b'(\varphi)\ ,\\ 
a'_\varphi(z, \varphi) &=& \frac{\partial a(z, \varphi)}{\partial \varphi} = 
\begin{bmatrix} z m(z^2) U^* \\ 0 \end{bmatrix} b'(\varphi) \ . 
\end{eqnarray*} 
\end{lemma}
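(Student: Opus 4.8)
The plan is to put $n^{-1/2}\hat\chi'(\varphi_1)$ into the contour form underlying Theorems~\ref{cvg-spk}--\ref{bilin-forms}, to recognise $2\Re(\xi)$ as the fluctuation part of that form, and to check that the leftover deterministic part is purely imaginary, hence has vanishing real part. A preliminary observation: for $n$ large $\sum_{i=1}^s\Pi_{i,n}=U_nU_n^*=\Pi_B$, the orthogonal projector onto $\mathrm{range}(B_n)$ --- indeed $\sum_i\Pi_i$ is the spectral projector of $P_nP_n^*$ onto its nonzero eigenvalues, i.e.\ the projector onto $\mathrm{range}(P_nP_n^*)=\mathrm{range}(P_n)=\mathrm{range}(B_n)$, the last equality because $S_n$ has rank $r$ for $n$ large (Assumption~\ref{ass:P-finer}). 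I would start by showing
\[
\frac1{\sqrt n}\hat\chi'(\varphi_1)\ \asymp\ \frac2{\sqrt n}\sum_{i=1}^s\zeta(\rho_i)\,\Re\bigl(b_1^*\widehat\Pi_i b_1'\bigr),
\]
using $\hat\chi'(\varphi_1)=2\sum_{k=1}^r\zeta(\hat\lambda_k)\Re(b_1^*\hat u_k\hat u_k^* b_1')$ and $\sum_{k:i(k)=i}\hat u_k\hat u_k^*=\widehat\Pi_i$; this amounts to replacing $\zeta(\hat\lambda_k)$ by $\zeta(\rho_{i(k)})$. The crude bound $|b_1^*\hat u_k\hat u_k^* b_1'|=O(n)$ is too lossy (after the $n^{-1/2}$ scaling the object is bounded in probability), so I would use the identity \eqref{eq:def-B-prime}, $\tfrac1n b_1'=-\tfrac{\imath cD}{2}b_1+\tfrac{cD}{2\sqrt3}b_1^\perp$, together with $|b_1^*\hat u_k|^2\in\RR$, to get
\[
\Re\bigl(b_1^*\hat u_k\hat u_k^* b_1'\bigr)=\frac{cDn}{2\sqrt3}\,\Re\bigl((b_1^*\hat u_k)(\hat u_k^* b_1^\perp)\bigr),\qquad |\hat u_k^* b_1^\perp|\le\|\widehat\Pi_{i(k)}b_1^\perp\|.
\]
Applying Theorem~\ref{bilin-forms} with $b_1=b_2=b_1^\perp$ (of bounded norm, Lemma~\ref{proj-b}) and noting $(b_1^\perp)^*\Pi_i b_1^\perp\le\|\Pi_B b_1^\perp\|^2\to0$ (Lemma~\ref{proj-b}: $B_n^*b_1^\perp\to0$, and $\Pi_i=U_n\mathcal I_i U_n^*$) gives $\|\widehat\Pi_i b_1^\perp\|\toas0$; combined with $\sqrt n\,|\hat\lambda_k-\rho_{i(k)}|=O_{\mathcal P}(1)$ (Proposition~\ref{spikes-tight}) and the smoothness of $\zeta$ near each $\rho_i>\lambda_+$, the replacement error is a finite sum of terms $\le\tfrac{cD}{\sqrt3}\bigl[\sqrt n\,|\zeta(\hat\lambda_k)-\zeta(\rho_{i(k)})|\bigr]|\hat u_k^* b_1^\perp|=O_{\mathcal P}(1)\,o(1)=o_{\mathcal P}(1)$.

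Next I would pass to contour integrals. The algebraic identity \eqref{eq-expansion-aH} holds verbatim for the deterministic vector $b_{2,n}=b'(\varphi_1)$ --- the growth $\|b'(\varphi_1)\|=O(n)$ is immaterial to an identity, and $\hat a_{2,n}$ then equals $\hat a'_\varphi(z,\varphi_1)$; on the event $O_n$, for $n$ large, the first integral vanishes and the contour deforms to $\gamma_i$ exactly as in the proof of Theorem~\ref{bilin-forms}. Hence, with probability one and $n$ large,
\[
b_1^*\widehat\Pi_i b_1'=\widehat T_i:=\frac1{\imath\pi}\oint_{\gamma_i}\hat a^*(z,\varphi_1)\,\widehat H(z)^{-1}\,\hat a'_\varphi(z,\varphi_1)\,dz .
\]
Setting $T_i:=\tfrac1{\imath\pi}\oint_{\gamma_i}a^*(z,\varphi_1)H(z)^{-1}a'_\varphi(z,\varphi_1)\,dz$, the definition \eqref{eq-xi} is precisely $\xi=\tfrac1{\sqrt n}\sum_i\zeta(\rho_i)(\widehat T_i-T_i)$, and since each $\zeta(\rho_i)$ is real, the previous paragraph yields
\[
\frac1{\sqrt n}\hat\chi'(\varphi_1)-2\Re(\xi)\ \asymp\ \frac2{\sqrt n}\sum_{i=1}^s\zeta(\rho_i)\Re(T_i)=\frac2{\sqrt n}\,\Re\Bigl(\sum_{i=1}^s\zeta(\rho_i)T_i\Bigr).
\]

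It remains to evaluate the right-hand side. Running the residue computation leading to \eqref{eq-T_k} with $b'(\varphi_1)$ in place of $b_2$ gives $T_i=\zeta(\rho_i)^{-1}b_1^*\Pi_i b_1'$, so $\sum_i\zeta(\rho_i)T_i=b_1^*\bigl(\sum_{i=1}^s\Pi_i\bigr)b_1'=b_1^*\Pi_B b_1'$ by the preliminary observation; and since $b_1=b(\varphi_1)$ is a column of $B_n$, $\Pi_B b_1=b_1$, whence
\[
\sum_{i=1}^s\zeta(\rho_i)T_i=b_1^*\Pi_B b_1'=b_1^* b_1'=-\imath D\,\frac{N-1}{2}\ \in\ \imath\RR ,
\]
which has zero real part, so $\tfrac1{\sqrt n}\hat\chi'(\varphi_1)-2\Re(\xi)\asymp0$, proving the lemma. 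The one delicate point is the first step: the $O(n)$-sized leading part of $b_1^*\hat u_k\hat u_k^* b_1'$ must not be handled by absolute values --- it is purely imaginary and hence annihilated by $\Re(\cdot)$ --- and it is this structural cancellation that forces the joint use of \eqref{eq:def-B-prime}, the subspace estimate $\|\widehat\Pi_i b_1^\perp\|\toas0$, and the tightness of $\sqrt n(\hat\lambda_k-\rho_{i(k)})$ from Proposition~\ref{spikes-tight}.
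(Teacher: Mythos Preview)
Your proof is correct and follows essentially the same route as the paper: replace $\zeta(\hat\lambda_k)$ by $\zeta(\rho_{i(k)})$ using the $B^\perp$-decomposition \eqref{eq:def-B-prime}, Theorem~\ref{bilin-forms}, Lemma~\ref{proj-b}, and the tightness from Proposition~\ref{spikes-tight}; then pass to the contour representation \eqref{eq-expansion-aH} on $\gamma_i$; finally kill the deterministic part via \eqref{eq-T_k}. The only cosmetic difference is in the last step: the paper observes that $2\sum_i\zeta(\rho_i)\Re(T_i)=\chi'(\varphi_1)=0$ because $\varphi_1$ maximises $\chi$, whereas you compute $\sum_i\zeta(\rho_i)T_i=b_1^*\Pi_B b_1'=b_1^*b_1'=-\imath D(N-1)/2\in\imath\RR$ explicitly --- these are the same identity.
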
 
\begin{proof}
Recall the definition of $\hat\chi$ as given in \eqref{eq:def-chi-hat}. A direct computation yields:
\begin{eqnarray*}
  \hat \chi ' (\varphi) &=& 2 \sum_{k=1}^r \zeta(\hat\lambda_{k,n}) \Re \left(  b_1^*(\varphi) \hat u_k \hat u_k^* 
b'_1(\varphi) 
  \right)\ ,\\
&=& 2 \sum_{i=1}^s \sum_{k:i(k)=i} \zeta(\hat\lambda_{k,n}) \Re \left(  b_1^*(\varphi) \hat u_k \hat u_k^* 
b'_1(\varphi) 
  \right)\ .
\end{eqnarray*}
Recall that $r$ and $s$ are fixed and independent from $n$ by
\ref{ass:P-rough}. We start by showing that
\begin{equation}
\label{approx:chi'} 
\frac 1{\sqrt{n}} \hat\chi'(\varphi_1) -  
\frac 2{\sqrt{n}} \sum_{i=1}^s 
\zeta(\rho_i) 
\Re\left( b_1^* \widehat\Pi_i b'_1 \right)
\xrightarrow[n\to\infty]{{\mathcal P}} 0 .
\end{equation} 
Since $\sqrt{n} ( \zeta(\hat\lambda_{k,n}) - \zeta(\rho_{i(k)}))$ is
tight as a corollary of Proposition \ref{spikes-tight}, it will be enough to prove that 
$n^{-1} \Re\left( b_1^* \hat u_k \hat u_k^* b'_1 \right) 
\to 0$ in probability for every $k$. By the definition \eqref{eq:def-B-prime} of $B^\perp$, we have
$$
\frac 1n\,  \Re\left( b_1^* \hat u_k \hat u_k^* b'_1 \right) 
= 
\frac{cD}{2\sqrt{3}}\, \Re\left( b_1^* \hat u_k \hat u_k^* b_1^\perp \right)\ .
$$ 
By Cauchy-Schwarz inequality,   
\[
\left| b_1^* \hat u_k \hat u_k^* b_1^\perp \right|^2 
\leq 
 b_1^* \widehat\Pi_{i(k)} b_1 \ 
 (b_1^\perp)^* \widehat\Pi_{i(k)} b_1^\perp .  
\]
By Theorem \ref{bilin-forms}, 
\[
 b_1^* \widehat\Pi_{i(k)} b_1 \ 
 (b_1^\perp)^* \widehat\Pi_{i(k)} b_1^\perp 
\ - \ 
\zeta(\rho_{i(k)})^{-2} \ b_1^* \Pi_{i(k)} b_1 \  
(b_1^\perp)^* \Pi_{i(k)} b_1^\perp  \ \toas \ 0 , 
\] 
and by Lemma \ref{proj-b}, 
$b_1^* \Pi_{i(k)} b_1 \  (b_1^\perp)^* \Pi_{i(k)} b_1^\perp \ \to \ 0$ 
(consider alternatively the cases $i(k)=1$ and $i(k) > 1$) 
which proves \eqref{approx:chi'}.  

Now, applying \eqref{eq-expansion-aH} and \eqref{eq-a-hat-a}, 
and taking up an argument used in the proof of Theorem \ref{bilin-forms}, we
have  
\begin{align*} 
2 \sum_{i=1}^s 
\frac{\zeta(\rho_i)}{\sqrt{n}} 
\Re\left( b_1^* \widehat\Pi_i b'_1 \right)
&=  
2 \sum_{i=1}^s 
\Re\left( \frac{-\zeta(\rho_i)}{\imath \pi \sqrt{n}}
\oint_{{\mathcal C}_i} \begin{bmatrix} b^*_1 & 0 \end{bmatrix}
{\bf Q}(z) \begin{bmatrix} b'_1 \\ 0 \end{bmatrix} \ dz \right) \\
&\phantom{=} + 
2 \sum_{i=1}^s 
\Re \left( \frac{\zeta(\rho_i)}{\imath \pi\sqrt{n}}
\oint_{{\mathcal C}_i} 
\hat a^*(z, \varphi_1) \widehat H(z)^{-1} \hat a'_\varphi(z, \varphi_1) \, dz  
\right) \\ 
&= 
2 \sum_{i=1}^s 
\Re \left( \frac{\zeta(\rho_i)}{\imath \pi\sqrt{n}}
\oint_{\gamma_i} 
\hat a^*(z, \varphi_1) \widehat H(z)^{-1} \hat a'_\varphi(z, \varphi_1) \, dz  
\right) 
\end{align*} 
with probability one for $n$ large. 
On the other hand, recalling \eqref{eq-T_k}, we have 
\[ 
0 = \chi'(\varphi_1) =  2 \sum_{i=1}^s 
\Re\left( \frac{\zeta(\rho_i)}{\imath \pi} 
\oint_{\gamma_i} 
a^*(z, \varphi_1) H(z)^{-1} a'_\varphi(z, \varphi_1) \, dz  
\right) \ ,
\]
which proves the result. 
\end{proof} 

Write $\widehat H(z) = H(z) + E(z)$ and $\hat a(z, \varphi) = 
a(z, \varphi) + e(z, \varphi)$. To be more specific, 
\begin{equation}
\label{E(z)} 
E({z}) = \begin{bmatrix}
{z} U^* (Q(z^2) - m(z^2)I_N) U & U^* X \widetilde Q(z^2) V \Omega \\
 \Omega V^* \widetilde Q(z^2) X^* U & 
{z} \Omega V^* (\widetilde Q(z^2) - \tilde m(z^2)I_n) V \Omega 
\end{bmatrix}  
\end{equation} 
and 
\[
e(z, \varphi) = \begin{bmatrix}
z U^* \left( Q(z^2) - m(z^2)I\right) \\
\Omega V^* \widetilde Q(z^2) X^*  
\end{bmatrix} b(\varphi) . 
\] 
Write $e'_\varphi(z, \varphi) = \partial e(z,\varphi) /\partial \varphi$. 
For a given $z \in \gamma_i$, $\widehat H^{-1} = 
H^{-1} - H^{-1} E H^{-1} + {\mathcal O}(\| E \|^2)$. This suggests the 
following development 
\begin{align*}
\xi &= 
\sum_{i=1}^s \left( 
\frac{\zeta(\rho_i)}{\imath \pi\sqrt{n}}
\oint_{\gamma_i} 
a^*(z, \varphi_1) H(z)^{-1} e'_\varphi(z, \varphi_1) \, dz \right. \\
& \phantom{\sum_i^s} \quad + 
\frac{\zeta(\rho_i)}{\imath \pi\sqrt{n}}
\oint_{\gamma_i} 
e^*(z, \varphi_1) H(z)^{-1} a'_\varphi(z, \varphi_1) \, dz  \\
& \phantom{\sum_i^s} \quad - \left. 
\frac{\zeta(\rho_i)}{\imath \pi\sqrt{n}}
\oint_{\gamma_i} 
a^*(z, \varphi_1) H(z)^{-1} E(z) H(z)^{-1} a'_\varphi(z, \varphi_1) \, dz 
\ + q_i \right) \\
&= \sum_{i=1}^s ( X_{1,i} + X_{2,i} + X_{3,i} + q_i ) \ . 
\end{align*} 
where the terms $q_i$ are ``higher order terms'' that appear when we expand
the r.h.s. of \eqref{eq-xi}. 
We first handle the terms $X_{k,i}$'s, then $q_i$.

\subsection*{The terms $X_{1,i}$}
Writing $U_n = \begin{bmatrix} U_{1,n} \cdots U_{s,n} \end{bmatrix}$ and 
$V_n = \begin{bmatrix} V_{1,n} \cdots V_{s,n} \end{bmatrix}$ where both 
$U_{i,n}$ and $V_{i,n}$ have $j_i$ columns, and recalling \eqref{inv(H)}, 
we have   
\begin{align*}
X_{1,i} &= 
\frac{\zeta(\rho_i)}{\imath \pi\sqrt{n}} \sum_{\ell=1}^s 
\oint_{\gamma_i} 
\begin{bmatrix} z m(z^2) b_1^* U_\ell & 0 \end{bmatrix} 
\times \frac{
\begin{bmatrix} z \tilde m(z^2) \omega_\ell^2 & -1 \\
- 1 & z m(z^2) \end{bmatrix} 
\otimes I_{j_\ell} 
}{z^2 m(z^2) \tilde m(z^2) \omega_\ell^2 - 1} \times \\
& 
\ \ \ \ \ \ \ \ \ \ \ \ \ \ \ \ \ \ \ \ \ \ \ \ \ \ \ \ \ \ \ \ 
\ \ \ \ \ \ \ \ \ \ \ \ \ \ \ \ \ \ \ \ \ \ \ \ \ \ \ \ \ \ \ \ 
\begin{bmatrix}
z U_\ell^* \left( Q(z^2) - m(z^2) I \right) b_1' \\
\omega_\ell V_\ell^* \widetilde Q(z^2) X^* b_1' 
\end{bmatrix} \, dz \\ 
&= 
\frac{\zeta(\rho_i)}{\imath \pi\sqrt{n}} \sum_{\ell=1}^s 
\oint_{\gamma_i} 
\frac{z^3 \omega_\ell^2 m(z^2) \tilde m(z^2) b_1^* \Pi_\ell 
\left( Q(z^2) - m(z^2) I \right) b_1'}
{z^2 m(z^2) \tilde m(z^2) \omega_\ell^2 - 1} \, dz \\
& \ \ 
- 
\frac{\zeta(\rho_i)}{\imath \pi\sqrt{n}} \sum_{\ell=1}^s \oint_{\gamma_i} 
\frac{\omega_\ell z m(z^2) b_1^* U_\ell V_\ell^* \widetilde Q(z^2) X^* b_1'}
{z^2 m(z^2) \tilde m(z^2) \omega_\ell^2 - 1} \, dz \\
&= 
\frac{\zeta(\rho_i)}{2 \imath \pi\sqrt{n}} \sum_{\ell=1}^s 
\oint_{\gamma'_i} 
\frac{w \omega_\ell^2 m(w) \tilde m(w) b_1^* \Pi_\ell 
\left( Q(w) - m(w) I \right) b_1'}
{w m(w) \tilde m(w) \omega_\ell^2 - 1} \, dw \\
& \ \ 
- 
\frac{\zeta(\rho_i)}{2 \imath \pi\sqrt{n}} \sum_{\ell=1}^s 
\oint_{\gamma'_i} 
\frac{\omega_\ell m(w) b_1^* U_\ell V_\ell^* \widetilde Q(w) X^* b_1'}
{w m(w) \tilde m(w) \omega_\ell^2 - 1} \, dw 
\end{align*} 
where $\gamma'_i$ encloses $\rho_i$ only.
These integrals are zero for $\ell \neq i$. For large $n$ and with probability
one, none of the numerators has a pole within $\gamma'_i$, hence by the
Residue Theorem
\[
X_{1,i} =  
\frac{b_1^* \Pi_i \left( Q(\rho_i) - m(\rho_i) I \right) b_1'}
{\sqrt{n} m(\rho_i)} 
- \frac{\omega_i b_1^* U_i V_i^* \widetilde Q(\rho_i) X^* b_1'}
{\sqrt{n}}
\]
a.s. for $n$ large enough.

Due to the bounded character of $\| n^{-1} b' \|$ and to Corollary 
\ref{clt-qf}, $X_{1,i}$ is tight for every $i$. By Lemma \ref{proj-b}, 
\[
X_{1,i} \asymp \delta_{i-1,0} \left( 
\frac{b_1^* \left( Q(\rho_1) - m(\rho_1) I \right) b_1'}
{\sqrt{n} m(\rho_1)} 
- \frac{\omega_1 b_1^* U_1 V_1^* \widetilde Q(\rho_1) X^* b_1'}
{\sqrt{n}} \right) . 
\]

\subsection*{The terms $X_{2,i}$}
We have here
\begin{align*} 
X_{2,i} &= 
\frac{\zeta(\rho_i)}{\imath \pi\sqrt{n}} \sum_{\ell=1}^s 
\oint_{\gamma_i} 
\begin{bmatrix} z b_1^* \left( Q(z^2) - m(z^2) I \right) U_\ell & 
\omega_\ell b_1^* X \widetilde Q(z^2) V_\ell \end{bmatrix} \\
& 
\ \ \ \ \ \ \ \ \ \ \ \ \ \ \ \ \ \ \ \ \ \ \ \ \ \ \ \ 
\times 
\frac{
\begin{bmatrix} z \tilde m(z^2) \omega_\ell^2 & -1 \\
- 1 & z m(z^2) \end{bmatrix} \otimes I_{j_\ell} }
{z^2 m(z^2) \tilde m(z^2) \omega_\ell^2 - 1} 
\times 
\begin{bmatrix}
z m(z^2) U_\ell^* b_1' \\
0
\end{bmatrix} \, dz \\ 
&= 
\frac{\zeta(\rho_i)}{2\imath \pi\sqrt{n}} 
\sum_{\ell=1}^s \oint_{\gamma'_i} 
\frac{w m(w) \tilde m(w) \omega_\ell^2 
b_1^* \left( Q(w) - m(w) I \right) \Pi_\ell b_1'}
{w m(w) \tilde m(w) \omega_\ell^2 - 1} \, dw \\
& \ \ \ - 
\frac{\zeta(\rho_i)}{2\imath \pi\sqrt{n}} 
\sum_{\ell=1}^s 
\oint_{\gamma'_i} 
\frac{\omega_\ell m(w) b^* X \widetilde Q(w) V_\ell U_\ell^* b_1'}
{w m(w) \tilde m(w) \omega_\ell^2 - 1} \, dw \\
&= 
\frac{b_1^* \left( Q(\rho_i) - m(\rho_i) I \right) \Pi_i b_1'}
{\sqrt{n} m(\rho_i)} 
- \frac{\omega_i b_1^* X \widetilde Q(\rho_i) V_i U_i^* b_1'}{\sqrt{n}}
\quad \text{w.p.} \ 1 \ \text{for large} \ n  \\ 
&\asymp \delta_{i-1,0} \left( 
\frac{b_1^* \left( Q(\rho_i) - m(\rho_i) I \right) \Pi_1 b_1'}
{\sqrt{n} m(\rho_i)} 
- \frac{\omega_i b_1^* X \widetilde Q(\rho_i) V_1 U_1^* b_1'}{\sqrt{n}}
\right)  
\end{align*} 
by Corollary \ref{clt-qf} and Lemma \ref{proj-b}.

\subsection*{The terms $X_{3,i}$}

From \eqref{inv(H)} and \eqref{E(z)}, we have 
\begin{align*}
X_{3,i} &= 
-\frac{\zeta(\rho_i)}{\imath \pi\sqrt{n}}
\oint_{\gamma_i} 
\sum_{p, \ell=1}^s \begin{bmatrix} z m(z^2) b_1^* U & 0 \end{bmatrix} \\
& \ \ \ \ \ \ 
\times \frac{
\left( \begin{bmatrix} z \tilde m(z^2) \omega_p^2 & - 1 \\ 
- 1 & z m(z^2) \end{bmatrix} 
\otimes {\mathcal I}_p \right) 
E(z) 
\left( \begin{bmatrix} z \tilde m(z^2) \omega_\ell^2 & - 1 \\ 
- 1 & z m(z^2) \end{bmatrix} 
\otimes {\mathcal I}_\ell \right) 
}
{(z^2 m(z^2) \tilde m(z^2) \omega_p^2 -1)
(z^2 m(z^2) \tilde m(z^2) \omega_\ell^2 -1)} \times \\ 
& 
\ \ \ \ \ \ \ \ \ \ \ \ \ \ \ \ \ \ \ \ \ \ \ \ 
\ \ \ \ \ \ \ \ \ \ \ \ \ \ \ \ \ \ \ \ \ \ \ \ 
\ \ \ \ \ \ \ \ \ \ \ \ \ \ \ \ \ \ \ \ \ \ \ \ 
\begin{bmatrix} z m(z^2) U^* b_1' \\ 0 \end{bmatrix} 
\, dz \\
&= 
-\frac{\zeta(\rho_i)}{\imath \pi\sqrt{n}}
\oint_{\gamma_i} 
\sum_{p, \ell=1}^s 
\begin{bmatrix} \omega_p^2 z^2 m(z^2)\tilde m(z^2) & - zm(z^2) \end{bmatrix} 
\times \\
& 
\frac{
\begin{bmatrix} 
z b_1^* \Pi_p (Q(z^2) - m(z^2)I) \Pi_\ell b_1' & 
\omega_\ell b_1^* \Pi_p X \widetilde Q(z^2) V_\ell U_\ell^* b_1' \\
\omega_p b_1^* U_p V_p^* \widetilde Q(z^2) X^* \Pi_\ell b_1' & 
z \omega_p \omega_\ell b_1^* U_p V_p^* (\widetilde Q(z^2) - \tilde m(z^2)) 
V_\ell U_\ell^* b_1' 
\end{bmatrix}
}
{(z^2 m(z^2) \tilde m(z^2) \omega_p^2 -1)
(z^2 m(z^2) \tilde m(z^2) \omega_\ell^2 -1)} \\ 
& 
\ \ \ \ \ \ \ \ \ \ \ \ \ \ \ \ \ \ \ \ \ \ \ \ 
\ \ \ \ \ \ \ \ \ \ \ \ \ \ \ \ \ \ \ \ \ \ \ \ 
\ \ \ \ \ \ \ \ \ \ \ \ \ \ \ \ \ \ 
\times \begin{bmatrix} \omega_\ell^2 z^2 m(z^2) \tilde m(z^2) \\ 
- z m(z^2) 
\end{bmatrix} \, dz \\
&= 
- \frac{\zeta(\rho_i)}{2 \imath \pi}
\oint_{\gamma_i'} \sum_{p, \ell=1}^s 
\frac{G_{p,\ell}(w)} 
{(w m(w) \tilde m(w) \omega_p^2 -1)
(w m(w) \tilde m(w) \omega_\ell^2 -1)} \, dw 
\end{align*} 
where 
\begin{multline*} 
G_{p\ell}(w) = n^{-1/2}\left( 
\omega_p^2 \omega_\ell^2 w^2 m(w)^2 \tilde m(w)^2 \, 
b_1^* \Pi_p (Q(w) - m(w)I) \Pi_\ell b_1' \phantom{\widetilde Q} \right. \\ 
- \omega_p \omega_\ell^2 w m(w)^2 \tilde m(w) \, 
b_1^* U_p V_p^* \widetilde Q(w) X^* \Pi_\ell b_1' \\ 
- \omega_p^2 \omega_\ell w m(w)^2 \tilde m(w) \, 
b_1^* \Pi_p X \widetilde Q(w) V_\ell U_\ell^* b_1' \\  
\left. + \omega_p \omega_\ell w m(w)^2 \, 
b_1^* U_p V_p^* (\widetilde Q(w) - \tilde m(w)I) 
V_\ell U_\ell^* b_1' \right) . 
\end{multline*} 
For large $n$ and with probability one, the $G_{p\ell}(w)$ are holomorphic
functions in a domain enclosing $\gamma'_i$, and $G_{p\ell}(w)$ does 
not cancel any of the terms of the denominator. 
The integrals of all terms in the sum such that $p \neq i$ and $\ell \neq i$ 
are zero. Each of the integrands of the terms 
$p = i, \ell\neq i$ or $p \neq i, \ell=i$ has a pole with degree one, and 
the corresponding integrals are of the form $K_{i\ell} G_{i\ell}(\rho_i)$ 
or $K_{pi} G_{pi}(\rho_i)$ where the $K_{i\ell}$ and $K_{pi}$ are real
constants. By inspecting the expression of $G_{p\ell}$ and by using 
Corollary \ref{clt-qf} and Lemma \ref{proj-b}, it can be seen 
that these terms converge to zero in probability. It remains to study the 
term $p=\ell=i$, which has a degree $2$ pole. Recalling that the residue of 
a meromorphic function $f(z)$ that has a pole with degree $2$ at $z_0$ is 
$\lim_{z\to z_0} d\left( (z-z_0)^2 f(z) \right)/ dz$ 
and letting $g_\ell(z) = z m(z) \tilde m(z) \omega_\ell^2 -1$, the integral
of this term is 
\[
\zeta(\rho_i) 
\left( 
\frac{G_{ii}(\rho_i) g_i''(\rho_i)}{g_i'(\rho_i)^3}  
- \frac{G'_{ii}(\rho_i)}{g_i'(\rho_i)^2} 
\right) . 
\] 
Thanks to Corollary \ref{clt-qf} and Lemma \ref{proj-b}, 
$\Re(G_{ii}(\rho_i)) \toP 0$. The same can be said
about $G'_{ii}(\rho_i)$ after a simple modification of Proposition 
\ref{clt-haar} and Corollary \ref{clt-qf}. In conclusion, 
\[
\forall i = 1,\ldots, s, \quad \Re(X_{3,i}) \toP 0  . 
\] 

\subsection*{The terms $q_i$}
These are the higher order terms that appear when we expand the right 
hand side of \eqref{eq-xi}. We shall work here on one of these terms, 
namely 
\[
\varepsilon = 
\frac{\zeta(\rho_i)}{\imath \pi\sqrt{n}}
\oint_{\gamma_i} 
a^*(z, \varphi_1) 
\left( \widehat H(z)^{-1} - H(z)^{-1} + H(z)^{-1} E(z) H(z)^{-1} \right)
a'_\varphi(z, \varphi_1) \, dz 
\]
and show that $\varepsilon \toP 0$. The other higher order terms can be handled
similarly. Writing $z = \sqrt{\rho_i} + R \exp(2 \imath\pi \theta)$ on the
circle $\gamma_i$, we have 
\[
|\varepsilon| \leq K \sqrt{n} \int_0^1 
\| \widehat H(z)^{-1} - H(z)^{-1} + H(z)^{-1} E(z) H(z)^{-1} \| \, d\theta 
\]
where $K$ is a constant whose value can change from line to line, but
which remains independent from $n$. Let $\phi$ be a function from
$[0,1]$ to a normed vector space. If $\phi$ is twice differentiable on
$(0,1)$, then it is known that $\| \phi(1) - \phi(0) - \phi'(0) \|
\leq \sup_{t \in (0,1)} 0.5 \| \phi''(t) \|$.  

Setting $\phi(t) = ( H
+ t E )^{-1}$ and recalling that $\hat H = H + E$, we have $\phi(1) = \hat H$, 
$\phi(0) = H$ and $\phi''(t) = (H+tE)^{-1}
E(H+tE)^{-1}E(H+tE)^{-1}$, hence
\[
\| \widehat H(z)^{-1} - H(z)^{-1} + H(z)^{-1} E(z) H(z)^{-1} \| \leq 
K \| E(z) \|^2 
\]
for $z \in \gamma_i$. Write $Q - mI = (Q-\alpha I) + (\alpha-m)I$ and $\widetilde Q - \tilde m I= 
(\widetilde Q - \tilde \alpha I) + (\tilde \alpha - \tilde m)I$, and decompose $E$ as defined in \eqref{E(z)}
as $E= E_1 + E_2$ where
\begin{eqnarray*}
E_1({z}) &=& \begin{bmatrix}
{z} U^* (Q(z^2) - \alpha(z^2)I_N) U & U^* X \widetilde Q(z^2) V \Omega \\
 \Omega V^* \widetilde Q(z^2) X^* U & 
{z} \Omega V^* (\widetilde Q(z^2) - \tilde \alpha (z^2)I_n) V \Omega 
\end{bmatrix}\ ,  \\
E_2({z}) &=& \begin{bmatrix}
{z} U^* (\alpha(z^2) - m(z^2))I_N) U & 0 \\
 0 & {z} \Omega V^* (\widetilde \alpha(z^2) - \tilde m(z^2))I_n V \Omega 
\end{bmatrix}  \ .
\end{eqnarray*}
Consider any element of $E_1$, for instance 
$z u_1^* (Q(z^2) - \alpha(z^2)I) u_1$. By Lemma \ref{lm-fq-haar}, 
\[
\sqrt{n} \EE\left( \int_0^1 
\1_{{\mathcal O}_n} \left| u_1^* (Q - \alpha ) u_1 \right|^2 d\theta \right) 
= 
\sqrt{n} \int_0^1 
\EE \1_{{\mathcal O}_n} \left| u_1^* (Q - \alpha ) u_1 \right|^2 d\theta 
\leq \frac{K}{\sqrt{n}} 
\] 
which shows that $\sqrt{n} \int_0^1 \| E_1 \|^2 d\theta \toP 0$. 
 
We now prove that $\sqrt{n} \int_0^1 \| E_2 \|^2 d\theta \toP 0$.  In
the space of probability measures on $\RR$ endowed with the weak
convergence metric, in order to prove that a sequence converges weakly
to $\mu$, it is enough to prove that from any sequence, we can extract
a subsequence along which the weak convergence to $\mu$ holds true.
We shall show along this principle that $\sqrt{n} \int_0^1 \| E_2 \|^2
d\theta \toP 0$.  Consider the term $\sqrt{n} (\alpha - m)$.  Let
$(z_k)$ be a denumerable sequence of points in $\CC - [0, \lambda_+]$
with an accumulation point in that set. By \ref{ass:strong-conv}, from
every sequence, there is subsequence $n_\ell$ such that $\sqrt{n_\ell}
(\alpha_{n_\ell}(z_1) - m(z_1)) \to 0$ almost surely (recall that the
convergence in probability implies the a.s. convergence along a
subsequence). By Cantor's diagonal argument, we can extract a subsequence
(call it again $n_\ell$) such that $\sqrt{n_\ell}
(\alpha_{n_\ell}(z_k) - m(z_k)) \to 0$ almost surely for every $k$. By
the normal family theorem, there is a subsequence along which the function
$\sqrt{n_\ell} (\alpha_{n_\ell} - m) \to 0$ uniformly on $\gamma_i$ a.s.
Repeating the argument for $\sqrt{n}
(\tilde\alpha - \tilde m)$, there is a subsequence $n_\ell$ along
which $\sqrt{n_\ell} \int_0^1 \| E_2 \|^2 d\theta \toas 0$, hence weakly.
Necessarily, $\sqrt{n_\ell} \int_0^1 \| E_2 \|^2 d\theta$ converges weakly to zero.
Now since the weak convergence to a constant is equivalent to the convergence 
in probability to the same constant, we obtain the desired result. We have 
finally shown that:
\[
\forall i = 1,\ldots, s, \quad q_i \toP 0\  . 
\]

\subsection*{Final derivations} 
Write $\hat{\bs\chi}' = \begin{bmatrix} \hat\chi'(\varphi_1), \ldots, 
\hat\chi'(\varphi_r) \end{bmatrix}$. Generalizing the previous argument to all
the $\varphi_k$ and gathering the results, we obtain   
\begin{align*} 
n^{-1/2} \hat{\bs\chi'}  &\asymp 
2 \Re\left[ 
\frac{b_k^* \left( Q(\rho_{i(k)}) - m(\rho_{i(k)}) I \right) b_k'}
{\sqrt{n} m(\rho_{i(k)})} 
+ 
\frac{b_k^* \left( Q(\rho_{i(k)}) - m(\rho_{i(k)}) I \right) \Pi_{i(k)} b_k'}
{\sqrt{n} m(\rho_{i(k)})} \right. \\
& \phantom{\asymp 2 \Re \bigl(} 
\left. - \frac{\omega_{i(k)} b_k^* U_{i(k)} V_{i(k)}^* 
\widetilde Q(\rho_{i(k)}) X^* b_k'}
{\sqrt{n}} 
- \frac{\omega_{i(k)} b_k^* X \widetilde Q(\rho_{i(k)}) V_{i(k)} U_{i(k)}^* b_k'}{\sqrt{n}}
\right]_{k=1}^r  \\
&\asymp \frac{cD}{\sqrt{3}} \sqrt{n} \Re\left[  
\frac{b_k^* Q(\rho_{i(k)}) b_k^\perp}
{m(\rho_{i(k)})} 
- 
\omega_{i(k)} b_k^* U_{i(k)} V_{i(k)}^* 
\widetilde Q(\rho_{i(k)}) X^* b_k^\perp \right]_{k=1}^r 
\end{align*} 
By Lemma \ref{proj-b}, matrix $A = \begin{bmatrix} V_{i(k)} U_{i(k)}^* b_k 
\end{bmatrix}_{k=1}^r$ satisfies $A^* A \to I_r$. Recall from the same 
lemma that $B^* B \to I_r$, $(B^\perp)^* B^\perp \to I_r$ and 
$(B^\perp)^* B \to 0$. Hence, Proposition \ref{clt-haar} can be applied to
the r.h.s. of this expression, and $n^{-1/2} \hat{\bs\chi'}$ 
converges in law to 
\[
\mathcal{N}\left( 0, 
\frac{c^2 D^2}{6} 
\diag\left(
\frac{m'(\rho_{i(k)}) - m(\rho_{i(k)})^2}{c m(\rho_{i(k)})^2}
+ \omega_{i(k)}^2 
\left( m(\rho_{i(k)}) + \rho_{i(k)} m'(\rho_{i(k)})\right) \right)_{k=1}^r 
\right) 
\]
It remains to recall Lemmas \ref{denom} and \ref{num-chi}
to terminate the proof of Theorem \ref{2nd-order}. 

\appendix 
\section{Proof of Proposition \ref{clt-haar}} 
\label{clt-fq}
The tightness of $\xi_n$ follows from Lemmas \ref{lm-fq-haar} and 
\ref{lm-fq-two-haars} with $p=2$ and from the application of 
Chebyshev's inequality. \\ 
Let $Z = [ z_{i,k} ]_{i,k=1}^{N,t}$ and 
$\widetilde Z = [ \tilde z_{i,k} ]_{i,k=1}^{n,t}$ be $N\times t$ and 
$n\times t$ standard Gaussian random matrices chosen such that $Z$, 
$\widetilde Z$ and the $N \times N$ matrix $\Gamma$ of singular values of $X$ 
are independent.
For $k=1,\ldots,t/2$, let  
$D_k = \diag( d_{i,k} )_{i=1}^N = (\Gamma^2 - \rho_k)^{-1}$  
and $C_k = \diag( c_{i,k} )_{i=1}^N = \Gamma (\Gamma^2 - \rho_k)^{-1}$.
Then 
\begin{multline*} 
\eta_n \stackrel{{\mathcal D}}{=} \left[
\sqrt{N} \left( 
\left[ (Z^* Z)^{-1/2} Z^* \left(D_k - \frac{\tr D_k}{N} \right) 
Z (Z^* Z)^{-1/2} \right]_{k, k+t/2} 
\right)_{k=1,\ldots,t/2},  \right. \\
\left. 
\sqrt{n} \left( 
\left[ (Z^* Z)^{-1/2} Z^* C_k \widetilde Z[1;N] 
(\widetilde Z^* \widetilde Z)^{-1/2} \right]_{k,k} 
\right)_{k=1,\ldots,t/2} 
\right]^T
\end{multline*}
where $\widetilde Z[1;N]$ is $\widetilde Z$ truncated to its first $N$ rows. 
By the Law of Large Numbers, $N^{-1} Z^* Z \to I_t$ and 
$n^{-1} \widetilde Z^* \widetilde Z \to I_t$ almost surely. Hence, if we 
show that the multidimensional random variables 
$A_{k,n} = N^{-1/2} Z^* (D_k - N^{-1} \tr D_k) Z$ and 
$B_{k,n} = N^{-1/2} Z^* C_k \widetilde Z[1;N]$ are tight for 
$k=1, \ldots,t/2$, and 
\begin{multline*} 
\bar\eta_n = \frac{1}{\sqrt{N}} \left[ \left( 
\left[ Z^* \left(D_k - \frac{\tr D_k}{N} \right) 
Z \right]_{k, k+t/2} 
\right)_{k=1,\ldots,t/2},  \right. \\
\left. 
\left( 
\left[ Z^* C_k \widetilde Z[1;N] 
\right]_{k,k} 
\right)_{k=1,\ldots,t/2} 
\right]^T
\end{multline*}
converges in law towards $\mathcal{CN}(0,R)$, the second result of Proposition 
\ref{clt-haar} is proven. From {\bf A3} and {\bf A4}, 
\begin{gather*} 
\frac 1N \sum_{i=1}^N \Bigl( d_{i,k} - \frac{\tr D_k}{N} \Bigr)^2 = 
\frac 1N \tr Q(\rho_k)^2 - \left( \frac 1N \tr Q(\rho_k) \right)^2 
\xrightarrow[n\to\infty]{\text{a.s.}} 
m'(\rho_k) - m(\rho_k)^2 ,  \ \text{and} \\
\frac 1N \sum_{i=1}^N c_{i,k}^2 = 
\frac 1N \tr Q(\rho_k) + \frac{\rho_k}{N} \tr Q(\rho_k)^2 
\xrightarrow[n\to\infty]{\text{a.s.}} 
m(\rho_k) + \rho_k m'(\rho_k)
\end{gather*} 
for all $k=1,\ldots,t/2$. Recalling that $Z$ and $\widetilde Z$ are 
standard Gaussian, it results that 
$\limsup_n \EE\left[ \| A_{k,n} \|^2 \, \| \, \Gamma_n \right]$ and 
$\limsup_n \EE\left[ \| B_{k,n} \|^2 \, \| \, \Gamma_n \right]$ are bounded
w.p.~1 by a constant. Tightness of the $A_{k,n}$ and $B_{k,n}$
follows. Now we have 
\begin{align*} 
\bar\eta_n &= 
\frac{1}{\sqrt{N}} 
\sum_{i=1}^N 
\left[ 
\left( (d_{i,k} - N^{-1} \tr D_k) 
z_{i,k}^* z_{i, k+t/2} \right)_{k=1,\ldots,t/2}, 
\left( c_{i,k} z_{i,k}^* \tilde z_{i, k} \right)_{k=1,\ldots,t/2} 
\right]^T \\
&=  \frac{1}{\sqrt{N}} \sum_{i=1}^N {\bf u}_{i,n} . 
\end{align*}
Observe that covariance matrix of $\bar\eta_n$ conditional to $\Gamma_n$
converges almost surely to $R$. 
Moreover, thanks to {\bf A4}, it is easy to see that the Lyapunov condition 
\[
\frac{1}{N^{1+a}} \sum_{i=1}^n \EE\left[ \| {\bf u}_{i,n} \|^{2(1+a)} 
\, \| \, \Gamma_n \right] 
\xrightarrow[n\to\infty]{\text{a.s.}} 0 
\]
is satisfied for any $a > 0$, hence $\bar\eta_n \toL 
\mathcal{CN}(0,R)$ which completes the proof of Proposition \ref{clt-haar}. 

\section{Sketch of the proof of Proposition \ref{spikes-tight}.}
\label{prf-tight} 
For $k=1,\dots, r$, let $\bar\rho_{k,n}$ be the solutions of the equation
$\omega_{k,n}^2 g(\rho) = 1$, where we recall that the $\omega_{k,n}^2$ are
the diagonal elements of matrix $\Omega_n$. 
Then, by a simple extension to the case $r\geq 1$ of the proof of 
\cite[Th. 2.15]{ben-rao-11}, one can show that the sequences 
$\sqrt{n} ( \hat\lambda_{k,n} - \bar\rho_{k,n})$ are tight. 
To obtain the result, we show that 
$\sqrt{n}( \bar\rho_{k,n} - \rho_{i(k)} ) = {\mathcal O}(1)$. 
Since $g$ is decreasing, this amounts to showing that 
$\sqrt{n}( \omega_{k,n}^2 - \omega_{i(k)}^2 ) = {\mathcal O}(1)$. 
Since the non zero eigenvalues of $P P^*$ coincide with
those of $B^* B \, S^* S$, it will be enough to prove that 
$\sqrt{n}( B^* B \, S^* S - O ) = {\mathcal O}(1)$. 
It is clear that $B^* B = I_r + n^{-1} A$ where $\sup_n \| A \| < \infty$, 
hence $\sqrt{n}( B^* B O - O ) \to 0$. By the last item in Assumption {\bf A6}, 
$\sqrt{n} B^* B (S^* S - O ) = {\mathcal O}(1)$, and the proposition is shown.

\section{Proof of Lemma \ref{proj-b}.}
\label{prf-proj} 
Observing that 
\[
b'(\varphi) = \frac{- \imath D}{\sqrt{N}} \begin{bmatrix} \ell 
\exp(-\imath D \ell \varphi) \end{bmatrix}_{\ell=0}^{N-1} \quad \text{and}
\quad  
b''(\varphi) = \frac{-D^2}{\sqrt{N}} \begin{bmatrix} \ell^2 
\exp(-\imath D \ell \varphi) \end{bmatrix}_{\ell=0}^{N-1}, 
\]
and using the fact that 
$N^{-(K+1)} \sum_{\ell=0}^{N-1} \ell^K \exp(\imath \alpha\ell) 
\to \delta_{\alpha,0} / (K+1)$ for $\alpha \in[-\pi, \pi]$, we have
$B^* B \to I_r$, $n^{-1} B^* B' \to -(\imath cD/2) I_r$, 
$n^{-2} (B')^* B' \to (c^2 D^2/3) I_r$, and 
$n^{-2} B^* B'' \to - (c^2 D^2/3) I_r$. \\ 
Writing $B^\perp = 2 \sqrt{3} (ncD)^{-1} B' + \imath \sqrt{3} B$ and
replacing in the above convergences, the stated properties of $B^\perp$ 
become straightforward. \\ 
We now show the last convergence. Assume without generality 
loss that $i=1$ and recall that $S^*S \to O^2$. 
Consider the isometry matrices 
$W = B (B^* B)^{-1/2}$ and $Z = S (S^* S)^{-1/2}$, and let 
$A = (B^*B)^{1/2} (S^*S)^{1/2}$, resulting in $P = W A Z^*$. 
Notice that the singular values of $A$ coincide with those of $P$ apart from
the zeros. 
Let ${\bs \pi}_1$ be the orthogonal projection matrix on the 
eigenspace of $AA^*$ associated with the eigenvalues 
$\omega_{1,n}^2, \ldots, \omega_{j_1,n}^2$. With these notations, 
$\Pi_1 = W {\bs \pi}_1 W^*$ and $\Pi_{B_1} = B_1 (B_1^* B_1)^{-1} B_1^*$. 
We have $A \to O$, hence ${\bs\pi}_1 \to 
\begin{bmatrix} I_{j_1} & 0 \\ 0 & 0 \end{bmatrix}$. Since $B^* B \to I$,
for any vector $x$ such that $\| x \| = 1$, we have 
$x^* \Pi_1  x - x^* B_1 B_1^* x \to 0$, and 
$x^* \Pi_{B_1} x - x^* B_1 B_1^* x \to 0$. Therefore, 
$x^* (\Pi_1 - \Pi_{B_1}) x \to 0$, which proves the last result. 

\bibliographystyle{plain} 
\bibliography{math}

\end{document}